\pgfplotsset{compat=1.3}
\newtheorem{theorem}{Theorem}[section]
\newtheorem{lemma}[theorem]{Lemma}
\newtheorem{proposition}[theorem]{Proposition}
\theoremstyle{definition}
\newtheorem{definition}[theorem]{Definition}
\newtheorem{example}[theorem]{Example}
\theoremstyle{remark}
\newtheorem{remark}[theorem]{Remark}
\numberwithin{equation}{section}
\def\d{\partial}
\newcommand{\X}[1]{X^{({#1})}}
\newcommand{\Xh}[1]{X_h^{({#1})}}
\newcommand{\Xhl}[1]{X_{h, [\ell]}^{({#1})}}
\newcommand{\Cstab}{C_{\mathrm{sta}}}
\newcommand{\Csta}{\Cstab}
\newcommand{\kk}{{{[\ell]}}}
\newcommand{\kkk}{{{{[\ell+1]}}}}
\newcommand{\Rt}{\tilde{R}}
\newcommand{\Et}{\tilde{E}}
\newcommand{\At}{\tilde{A}}
\newcommand{\TG}{{\mathtt{T2G}}}
\newcommand{\Rsem}{R^{\mathrm{sem}}}
\newcommand{\Rexp}{R^{\mathrm{exp}}}
\newcommand{\Rsat}{R^{\mathrm{sat}}}
\newcommand{\Rimp}{R^{\mathrm{imp}}}
\newcommand{\Rhimp}{R_{h, \mathrm{imp}}}
\newcommand{\RRhimp}{\RR_h^{\mathrm{imp}}}
\newcommand{\RRexp}{\RR^{\mathrm{exp}}}
\newcommand{\RRsat}{\RR^{\mathrm{sat}}}
\newcommand{\Ehimp}{E_{h, \mathrm{imp}}}
\newcommand{\usat}{u^{\mathrm{sat}}}
\newcommand{\uexp}{u^{\mathrm{exp}}}
\newcommand{\uhatexp}{\hat{u}^{\mathrm{exp}}}
\newcommand{\uhimp}{u_{h1}^{\mathrm{imp}}}
\newcommand{\uhatimp}{\hat{u}^{\mathrm{imp}}}
\newcommand{\Lv}{L^{\vtx}}
\newcommand{\Hv}{H^{\vtx}}
\newcommand{\Hhv}{H_h^{\vtx}}
\newcommand{\Hvh}{\Hhv}
\newcommand{\diam}{\mathop{\text{diam}}}
\newcommand{\cmax}{\hat{c}}
\newcommand{\hv}{h_{\vtx}}
\newcommand{\vtx}{{\mathtt{v}}}
\newcommand{\vPhi}{\varPhi}
\newcommand{\RR}{\mathcal{R}}
\newcommand{\RRR}{\mathbb{R}}
\newcommand{\ip}[1]{\langle {#1} \rangle}
\newcommand{\Lc}[1]{\mathcal{L}^{({{#1}})}}
\newcommand{\Sc}{S} 
\newcommand{\Gc}{\mathcal{G}}
\newcommand{\Dc}{\mathcal{D}}
\newcommand{\Dcxn}{\mathcal{D}(x)^{(n)}}
\newcommand{\Dcn}{\mathcal{D}^{(n)}}
\newcommand{\Dcnu}{\mathcal{D}^{(\nu)}}
\newcommand{\DcnF}{\mathcal{D}^{(n_F)}}
\newcommand{\Bc}{\mathcal{B}}
\newcommand{\Nc}{\mathcal{N}}
\newcommand{\Fc}{\mathcal{F}}
\newcommand{\Mco}{{M_0}} 
\newcommand{\Mcs}{M_1} 
\newcommand{\divx}{\mathop{\mathrm{div}_x}}
\newcommand{\gradx}{\mathop{\mathrm{grad}_x}}
\newcommand{\gradxt}{\mathop{\mathrm{grad}_{x\hat{t}}}}
\newcommand{\dt}{\mathop{\partial_t}}
\newcommand{\dth}{\mathop{\partial_{\hat{t}}}}
\newcommand{\uh}{{\hat{u}}}
\newcommand{\veps}{{\varepsilon}}
\newcommand{\om}{{\varOmega}}
\newcommand{\ov}{{\varOmega^{\vtx}}}
\newcommand{\ovh}{{\varOmega_h^{\vtx}}}
\newcommand{\oh}{{\varOmega_h}}
\newcommand{\din}{{\partial_{\text{in}}}}
\newcommand{\Fh}{\hat{F}^n}
\newcommand{\curl}{\mathop{\text{curl}}}
\newcommand{\vphi}{\varphi}
\newcommand{\tb}{\mathchar'26\mkern-7.5mu  b}
\newcommand{\dtb}{\partial_{\tb}}
\newcommand{\dbot}{\partial_{\mathrm{bot}}}
\newcommand{\dtop}{\partial_{\mathrm{top}}}
\newcommand{\dbdr}{\partial_{\mathrm{bdr}}}
\newcommand{\vptb}{\varphi_{\tb}}
\newcommand{\vpbot}{\varphi_{\mathrm{bot}}}
\newcommand{\vptop}{\varphi_{\mathrm{top}}}
\newcommand{\Vhv}{V_h^\vtx}
\newcommand{\Tv}{{T^\vtx}}
\newcommand{\Tvk}{{T_\kk^\vtx}}
\newcommand{\Tvh}{{\hat{T}^\vtx}}
\newcommand{\Fcvb}{{\Fc^\vtx_b}}
\newcommand{\Fcvi}{{\Fc^\vtx_i}}
\newcommand{\jmp}[1]{\ldbrack{{#1}}\rdbrack}
\begin{document}

\title[Convergence analysis on tents]{Convergence analysis of some tent-based schemes for linear hyperbolic systems}

\author[D.~Drake]{Dow Drake}
\address{Portland State University, PO Box 751, Portland OR 97207,USA }
\email{ddrake@pdx.edu}

\author[J.~Gopalakrishnan]{Jay Gopalakrishnan}
\address{Portland State University, PO Box 751, Portland OR 97207,USA }
\email{gjay@pdx.edu}

\author[J.~Sch\"oberl]{Joachim Sch{\"oberl}}
\address{Technische Universit{\"a}t Wien, Wiedner Hauptstra\ss e 8-10, 1040 Wien, Austria}
\email{joachim.schoeberl@tuwien.ac.at}

\author[C.~Wintersteiger]{Christoph Wintersteiger}
\address{Technische Universit{\"a}t Wien, Wiedner Hauptstra\ss e 8-10, 1040 Wien, Austria}
\email{christoph.wintersteiger@tuwien.ac.at}

\thanks{This work was supported in part by NSF grant DMS-1912779.}

\subjclass[2020]{65M12}

\date{}


\begin{abstract}
  Finite element methods
  for symmetric linear hyperbolic systems
  using unstructured advancing fronts
  (satisfying a causality condition) are considered in this work.
  Convergence results and error bounds are obtained for mapped tent
  pitching schemes made with standard discontinuous Galerkin
  discretizations for spatial approximation on mapped tents. 
  Techniques to study semidiscretization on mapped tents,
  design fully discrete schemes,
  prove local error bounds, 
  prove stability on spacetime fronts,
  and bound error propagated
  through unstructured layers are developed.
\end{abstract}

\keywords{spacetime, advancing front, tent pitching,
  causality, Friedrichs system,
  semidiscrete, stability, discontinuous Galerkin, Taylor timestepping,
  MTP scheme,   SAT timestepping}
  
\maketitle

\section{Introduction}

Tent-based numerical methods for hyperbolic equations stratify a
spacetime simulation region in an unstructured manner, by tent-shaped
subregions, and advance solutions across them
progressively in time.
The partitioning into tents provides a rational design
for local time stepping, maintaining high order accuracy in both space
and time, and without any ad hoc projection or extrapolation steps, an
advantage that has been and continues to be effectively leveraged by
many researchers~\cite{AbediHaber18, AbediPetraHaber06, FalkRicht99,
  MonkRicht05, PerugSchobStock20, Richt94, YinAcharSobh00}.
Nevertheless, a drawback of tents is that they are not
tensor products of a spatial domain with a
time interval, necessitating development of new tent (spacetime)
discretizations coupling too many
spatiotemporal unknowns.  In~\cite{GopalSchobWinte17}, we overcame
this drawback by using a mapping technique that transforms spacetime
tents to tensor product spacetime cylinders. This opened avenues to
use standard techniques like the discontinuous Galerkin (DG)
discretizations for spatial discretization, together with efficient
matrix-free 
time stepping schemes, on the mapped tents.  Such methods, referred to
as {\em Mapped Tent Pitching} (MTP) schemes, have been applied to solve
a variety of linear and nonlinear hyperbolic
systems~\cite{GopalHochsSchob20, GopalSchobWinte17,
  GopalSchobWinte20}.

This is the first paper to provide convergence theorems for MTP
schemes. Although the scope of the analysis here is limited to linear
hyperbolic systems, we identify what we consider to be the basic
ingredients for error analysis of MTP schemes, such as a norm in which
stability on spacetime fronts can be obtained. We use techniques to
bound the propagation of error through layers of tents similar to
those in~\cite{FalkRicht99, MonkRicht05}. However, a number of new
tools are needed to overcome difficulties arising from a
time-dependent mass matrix generated due to the mapping. As we already
noted in \cite{GopalHochsSchob20}, the use of classical explicit
Runge-Kutta time stepping on these mapped systems
leads to loss of higher orders of convergence due
to the complications created by the map. We outlined an algorithmic
solution in~\cite{GopalHochsSchob20}, namely the  {\em Structure-Aware
  Taylor} (SAT) time stepping scheme, that accounts for the
specific structure of the time-dependent mass matrix. Here we shall
provide {\it a priori} error bounds for these as well as a few other
schemes.

Our analysis is divided into the next three sections. First, we borrow
a spatial DG discretization framework from~\cite{BurmaErnFerna10,
  ErnGuerm06} which permits the treatment of many important examples
of symmetric hyperbolic systems and many boundary condition choices,
all at once. The application of this framework to the mapped equation
(the pull back of the hyperbolic system from the physical tent to the
spacetime cylinder), is detailed in Section~\ref{sec:model-probl}.  We
then combine it with a semidiscrete analysis in
Section~\ref{sec:semidisc}. While the analysis in that section
ignores errors due to
time discretization that are undoubtedly present in practice, it
immediately clarifies in what norms one may expect stability on
spacetime advancing fronts, and what error bounds might be provable
after time discretization. The main result of this section is that
under the conditions spelled out later, we may expect the error in the
numerical solution at the final time to be $O(h^{p+1/2})$ where $h$
represents a spatial mesh size parameter and $p$ denotes the
spatial polynomial degree (used in the spatial DG discretization).
In this form,
the result is comparable to \cite[Theorem~5.1]{MonkRicht05} that
provides the same rate for their spacetime DG method using
spacetime polynomials of degree $p$ on tents.

In Section~\ref{sec:fully-discr}, we discuss several fully discrete
schemes that combine the spatial DG discretization on a mapped tent
with SAT or other time stepping.  We find that proving stability of
the fully discrete schemes requires some trickery. Ever since
the classical work of~\cite{LevyTadmo98}, we know that stability
regions and  ``naive
spectral stability analysis based on scalar eigenvalues arguments may
be misleading.''  Many researchers have since pursued energy-type arguments
to prove stability of time stepping schemes with spatial DG
discretizations \cite{BurmaErnFerna10, CockbShu98, SunShu17,
  ZhangShu04}. However, we are not able to directly apply existing 
techniques due to the nonstandard nature of the system we obtain after
mapping the hyperbolic equation.  Therefore, we start afresh,
beginning with the most basic scheme and proceeding to  more
complicated cases. Namely, in \S\ref{ssec:lowest-order-tent-implicit}
we prove unconditional strong stability for a lowest order
tent-implicit scheme. Then, we proceed to analyze a lowest order
``iterated'' explicit scheme in \S\ref{ssec:lowest-order-expl},
constructed as an iterative solver for the implicit scheme, for which
we prove a nonstandard conditional stability.  Then, in
\S\ref{ssec:arbitrary-order-sat}, we proceed to an $s$-stage SAT
scheme and show that its local error in a tent is $O(h^s)$, which
is comparable to the $s$th power of the time step since the amount of
local time advance in tents 
is tied to its spatial mesh size. We are
able to prove, in one case, stability under a traditional
Courant-Friedrichs-Levy (CFL) condition, by which we mean that the
amount of {\em local} time advance within a tent is limited by a
constant multiple of the {\em local} spatial mesh size. In another
case, we prove stability under a ``3/2 CFL'' condition.
In some non-tent-based
DG methods, others have encountered
a similar (4/3 CFL) limitation in stability
analyses~\cite{BurmaErnFerna10}. We offer the above-mentioned cases
not as the last word on stability, but rather to spur further research
into this interesting topic.

In the remainder of this section, we establish notation and the lingua
franca of tents that we use throughout.
Consider a
cylindrical domain $\om \times (0,T)$
in the physical spacetime, where the spatial
domain $\om$ is an open bounded subset of $\RRR^N$.
We assume that $\om$ is subdivided by a
simplicial mesh $\oh$.  The subscript $h$ denotes the maximal element
diameter of the spatial mesh $\oh$.  Spacetime tents are built atop
this spatial mesh, using the algorithms in~\cite{GopalSchobWinte17}
or~\cite{ErickGuoySulli05}.  We start by viewing the spatial mesh
$\oh$ at time $t=0$ as the initial advancing front. When one mesh
vertex $\vtx$ is moved forward in time, while keeping all other
vertices fixed, the advancing front is updated to the piecewise planar
surface formed by connecting the raised $\vtx$ to its neighboring
vertices. The new front differs from the old by a tent-shaped region,
which we denote by $\Tv$. Its projection onto $\om$ gives the vertex
patch $\ov$ of all spatial simplices connected to the vertex
$\vtx$. We shall refer to this process as {\em pitching} the tent
$\Tv$.  For concurrency, one pitches multiple tents simultaneously at
vertices whose vertex patches do not have a mesh element in their
pairwise intersections, as in Figure~\ref{fig:tents:L1}.
The canopies of these spacetime tents can be
represented as the graph of $\varphi_1(x)$, a continuous function that
is piecewise linear with respect to the mesh $\oh$ (whose value is
zero in locations where tents are not yet erected). These canopies
together form the next advancing front:
$C_1 = \{ (x, \vphi_1(x)): x \in \om\}$. Note that the time coordinate of
a point in $C_1$ is never less than that of
the corresponding point in the first front
$C_0 = \om \times \{0\}$.  This process is repeated by pitching tents
atop $C_1$, and later atop the subsequent advancing fronts that result
from each step (as illustrated 
in Figures~\ref{fig:tents:L2}--\ref{fig:tents:Lall}).

Reiterating,  the {\em advancing front} at step $i$ is
the graph of a lowest-order Lagrange finite element function
$\vphi_i(x)$:
\begin{equation}
  \label{eq:Ci}
  C_i = \{ (x, \vphi_i(x)): x \in \om\}.  
\end{equation}
We shall refer to the region between two successive advancing fronts
as a {\em layer}, i.e., 
\begin{equation}
  \label{eq:Li}
  L_i = \{ (x, t) \in \om \times (0, T): \; \varphi_{i-1}(x) \le t \le
  \varphi_i(x) \}
\end{equation}
denotes the $i$th layer for $i=1, 2, \ldots, m.$ The layer $L_i$
(see Figure~\ref{fig:tents}) is
made above $C_{i-1}$ by pitching tents atop vertex patches associated
with a subset of mesh vertices of $\oh$, which form the {\em pitch
locations} at that stage. Let $V_{i}$ denote the collection of such
vertices identifying the pitch locations on $C_{i-1}$. Then
$
  L_{i} = \mathop{\bigcup}_{\vtx \in V_i} \Tv.
$
The spacetime will generally contain multiple tents pitched at the
same vertex $\vtx$ at different time coordinates. Although referring
to a tent by its spatial pitch location alone, as in $\Tv$ above, is
generally ambiguous, it will not confuse us since we will usually be
occupied with analyzing one tent at a time.

A tent can be expressed as
\begin{equation}
  \label{eq:Tv}
  \Tv = \{ (x, t): x \in \ov, \; \vpbot^\vtx(x) \le t \le \vptop^\vtx(x)\}
\end{equation}
where
$\vpbot^\vtx$ and $\vptop^\vtx$ are continuous functions on $\ov$ that are
piecewise linear with respect to the mesh elements forming the vertex
patch $\ov.$ The function $\delta^\vtx(x) = \vptop^\vtx(x) - \vpbot^\vtx(x)$ on $\ov$
will feature often in the sequel. It arises as a weight in transformed
integrals and is degenerate at the points where tent top meets tent
bottom.

\begin{figure}
  \centering
  \subcaptionbox{Spatial mesh $\om_h$. \label{fig:tents:mesh}}{
      \includegraphics[trim=60   340   470   150, clip,  %
      width=0.3\linewidth]{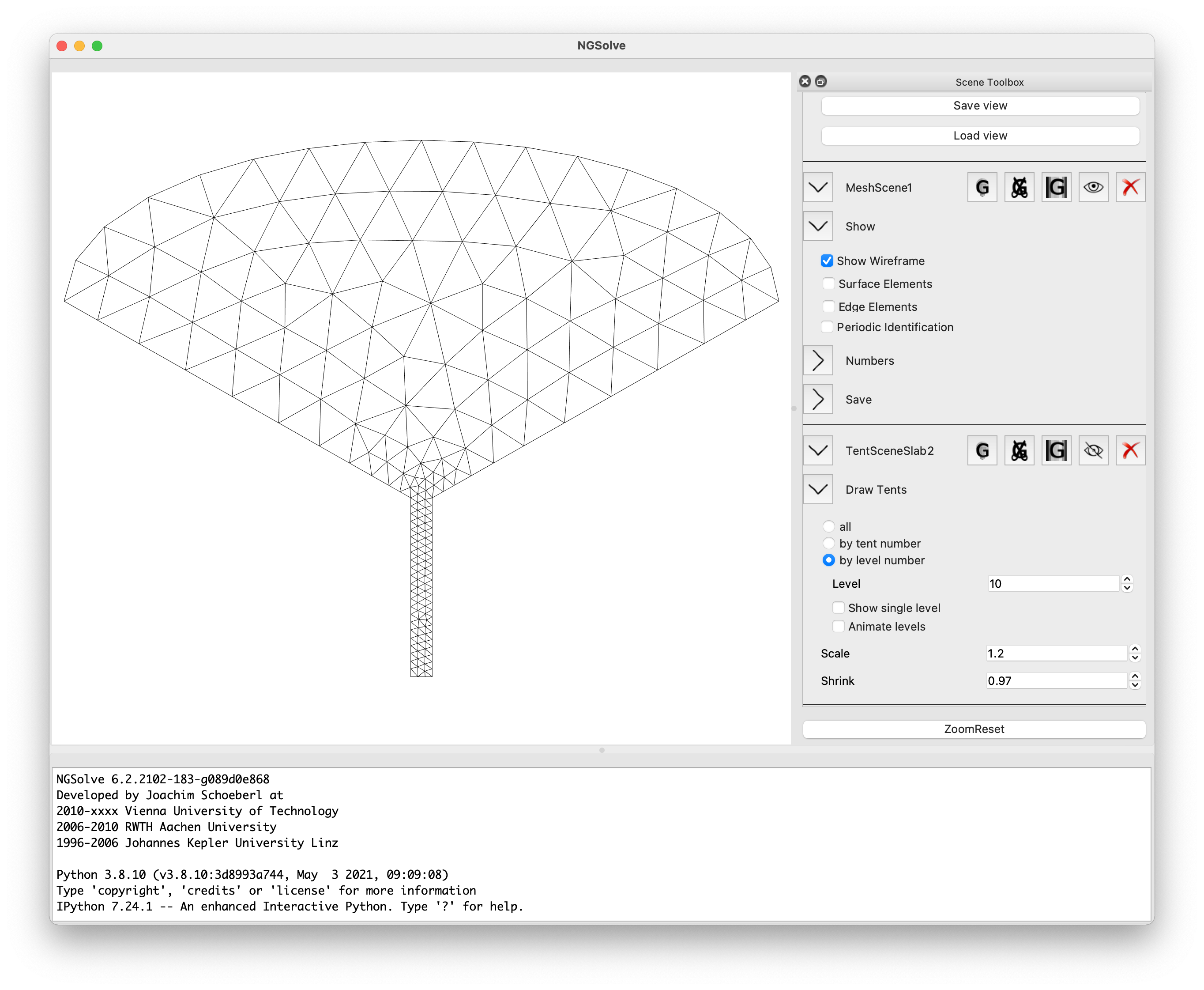}%
    }
    \subcaptionbox{
      The region in blue is the
      layer $L_1$. Tent canopies form part of the advancing
      front~$C_1$.
      \label{fig:tents:L1}
    }{
    \frame{
      \includegraphics[trim=60   400  500   150, clip,  %
      width=0.3\linewidth]{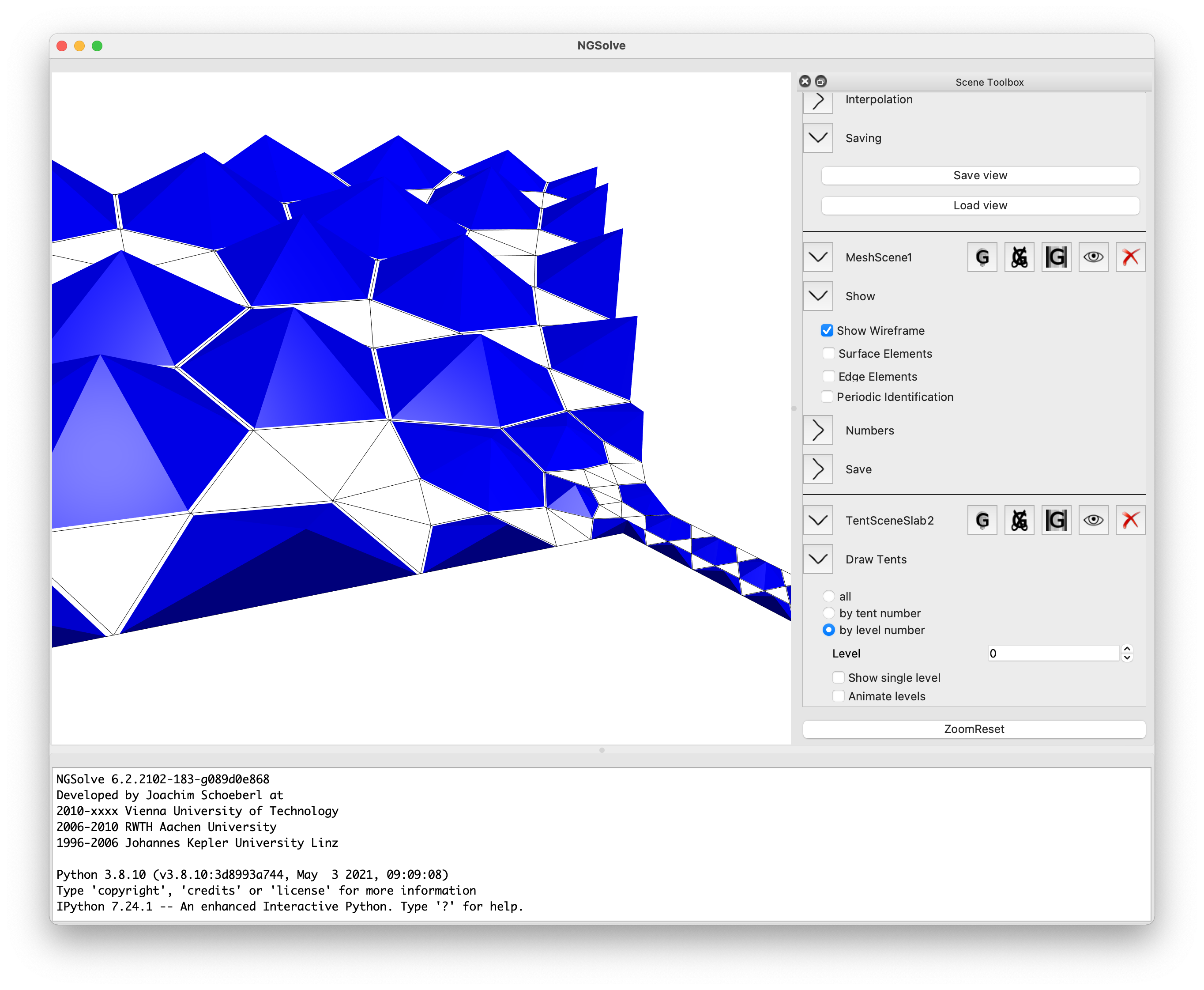}%
    }}
  \;
  \subcaptionbox{
    Green tents form
    layer $L_2$. Top  canopies (blue and green) are part of the 
    front~$C_2$.    \label{fig:tents:L2}
  }{
    \frame{
      \includegraphics[trim=60   400  500   150, clip,  %
      width=0.3\linewidth]{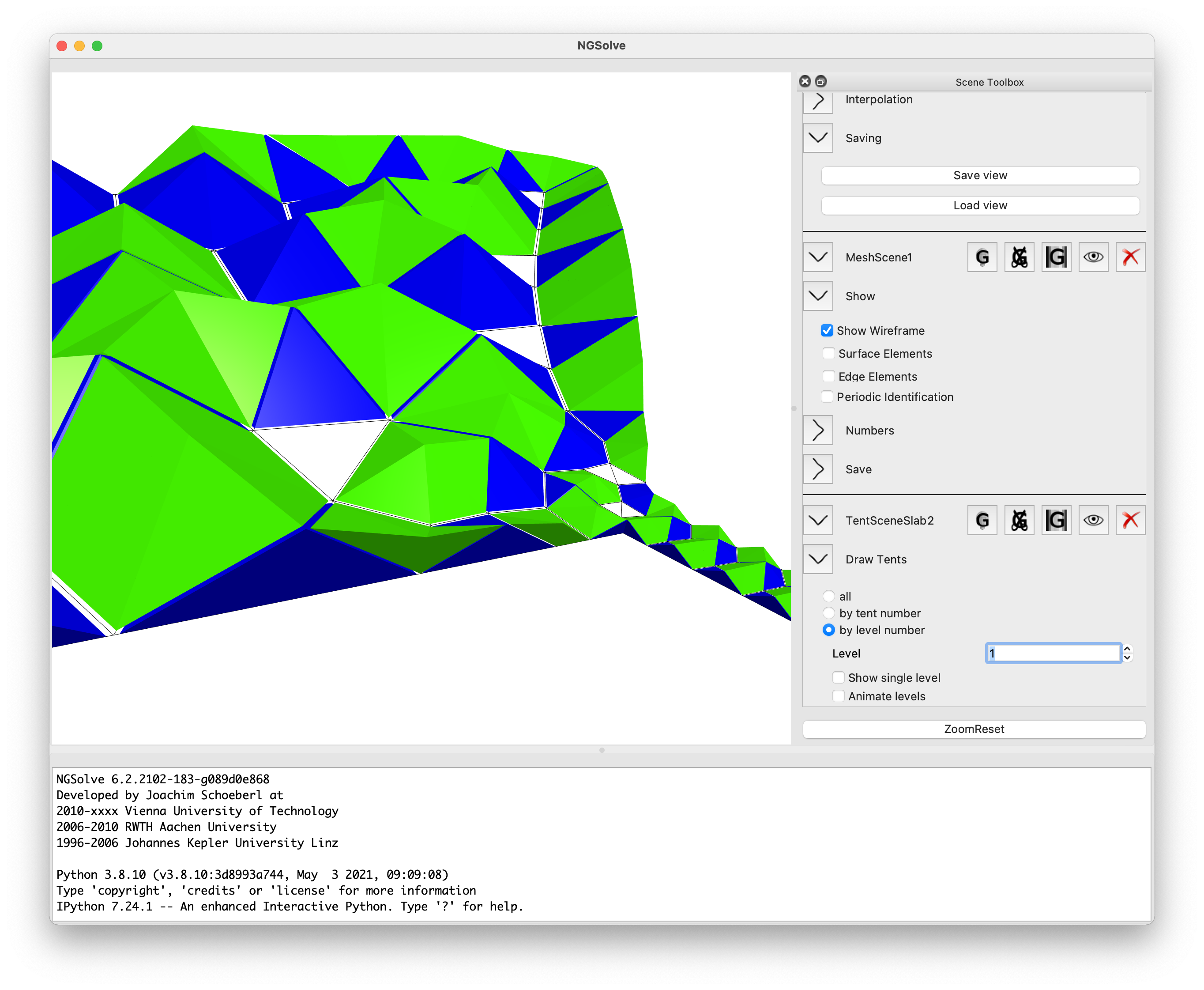}%
    }}
  \subcaptionbox{$L_3$ and $C_3$.}{
    \frame{
      \includegraphics[trim=60   400  500   150, clip,  %
      width=0.3\linewidth]{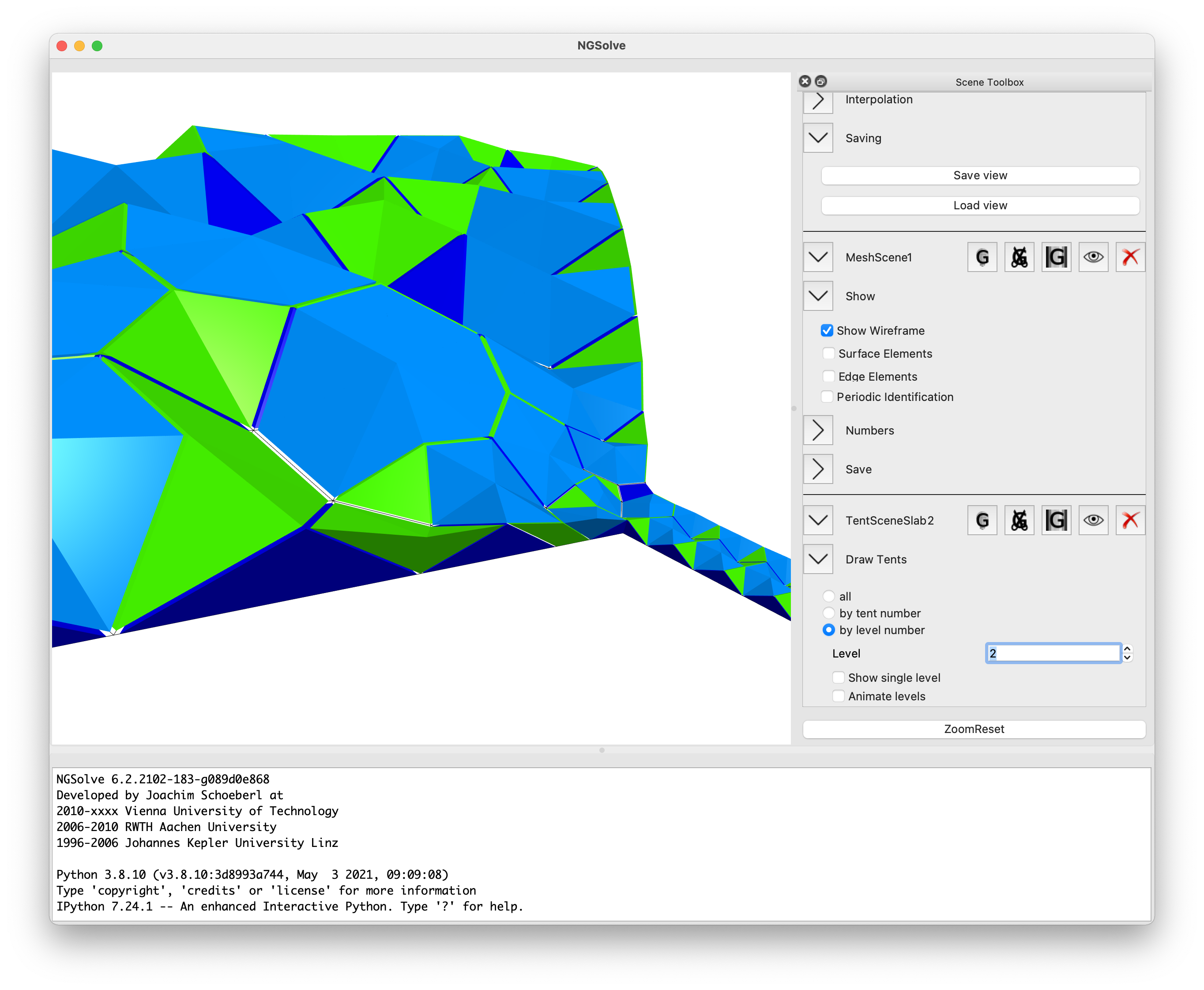}%
    }}\;
  \subcaptionbox{$L_4$ and $C_4$. \label{fig:tents:L4}}{
    \frame{
      \includegraphics[trim=60   400  500   150, clip,  %
      width=0.3\linewidth]{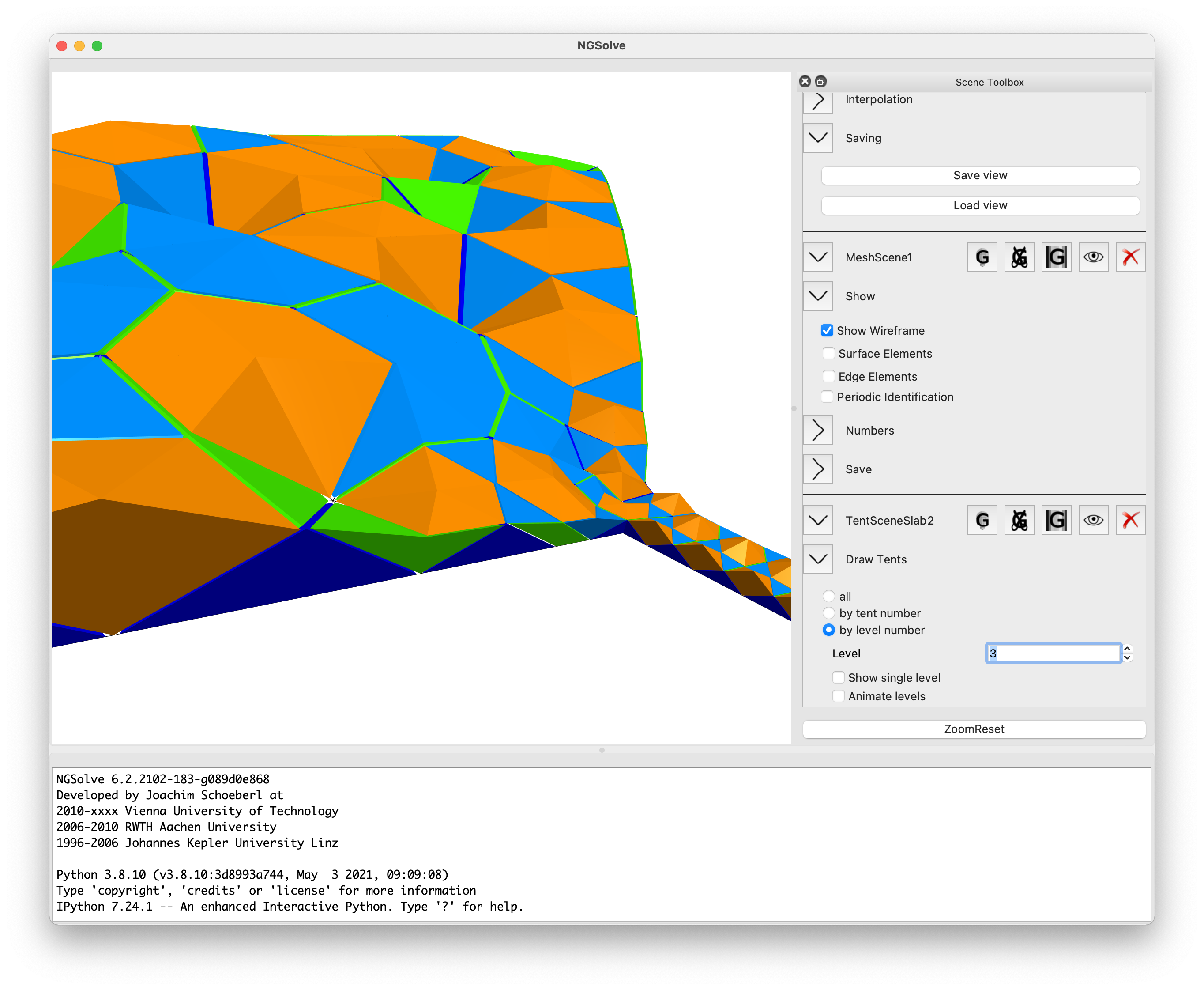}%
    }}\;
  \subcaptionbox{Layers filling $\om \times (0, t_{\mathrm{slab}})$.
    \label{fig:tents:Lall}}{
    \frame{
      \includegraphics[trim=60   400  500   150, clip,  %
      width=0.3\linewidth]{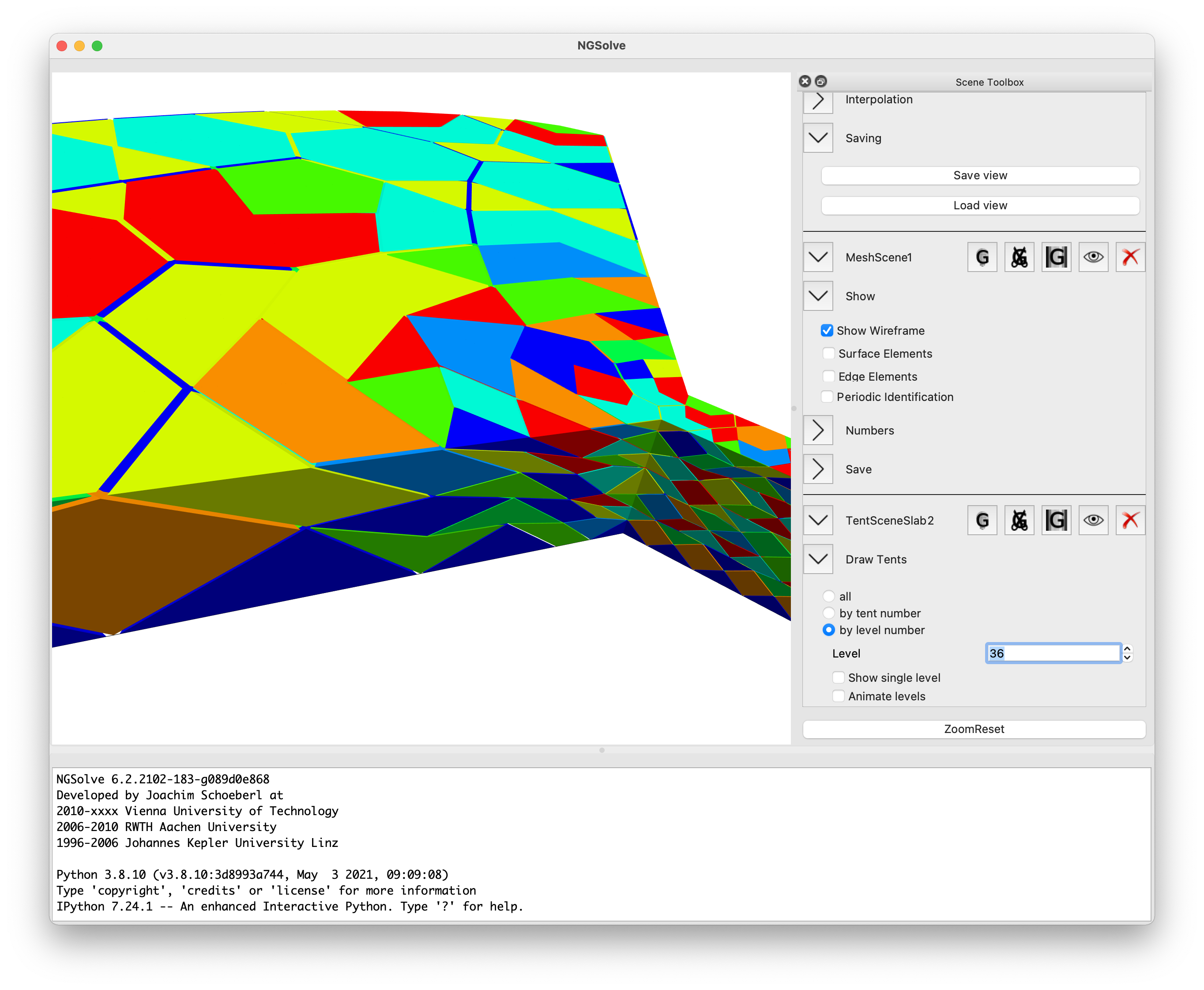}%
    }}
  \subcaptionbox{An initial pulse. \label{fig:tents:sol0}}{
    \frame{
      \includegraphics[trim=270  200   270   300, clip,  %
      width=0.3\linewidth]{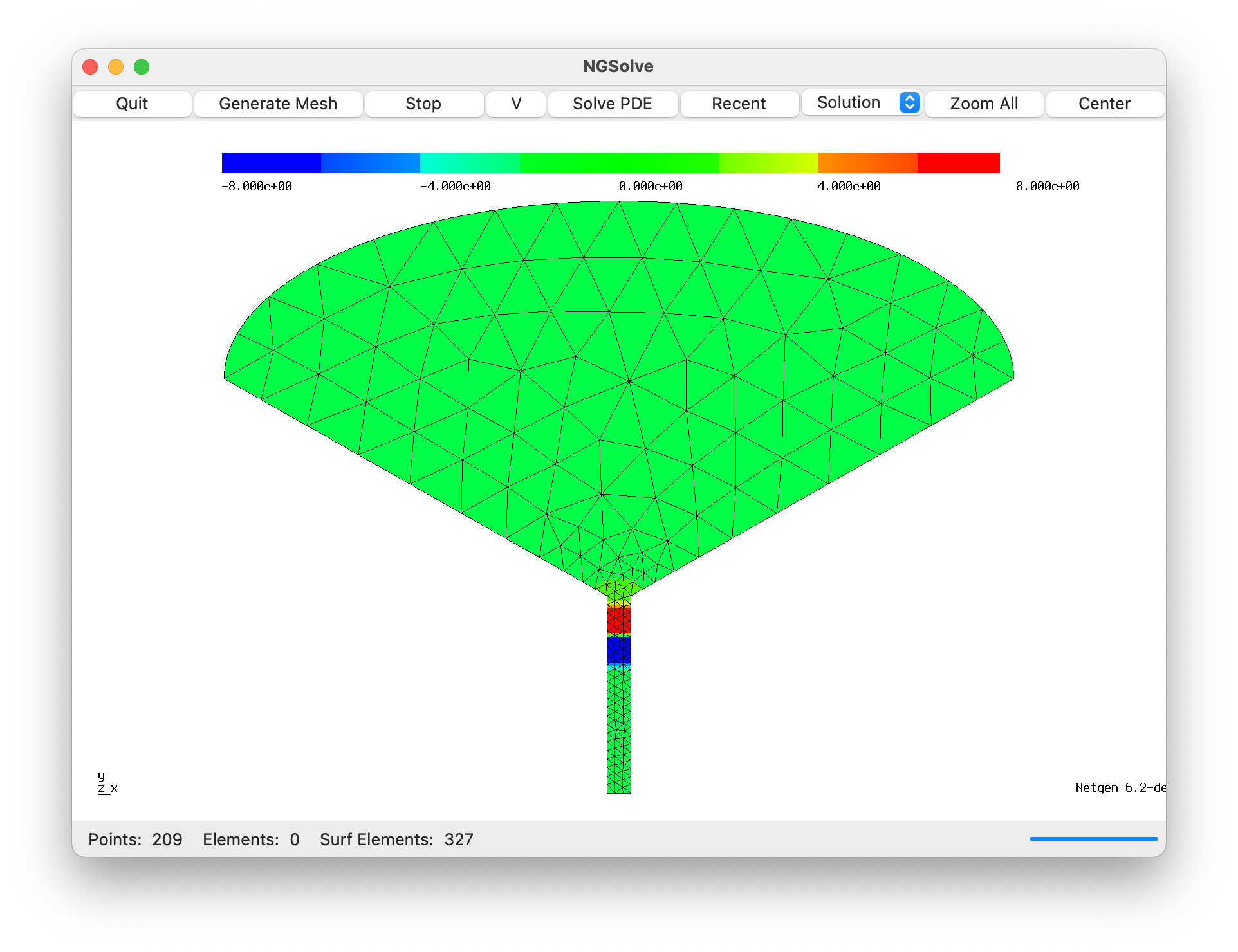}}
  }
  \subcaptionbox{Solution at $t =t_{\mathrm{slab}}$. \label{fig:tents:sol1}}{
    \frame{
      \includegraphics[trim=270  200   270   300, clip,  %
      width=0.3\linewidth]{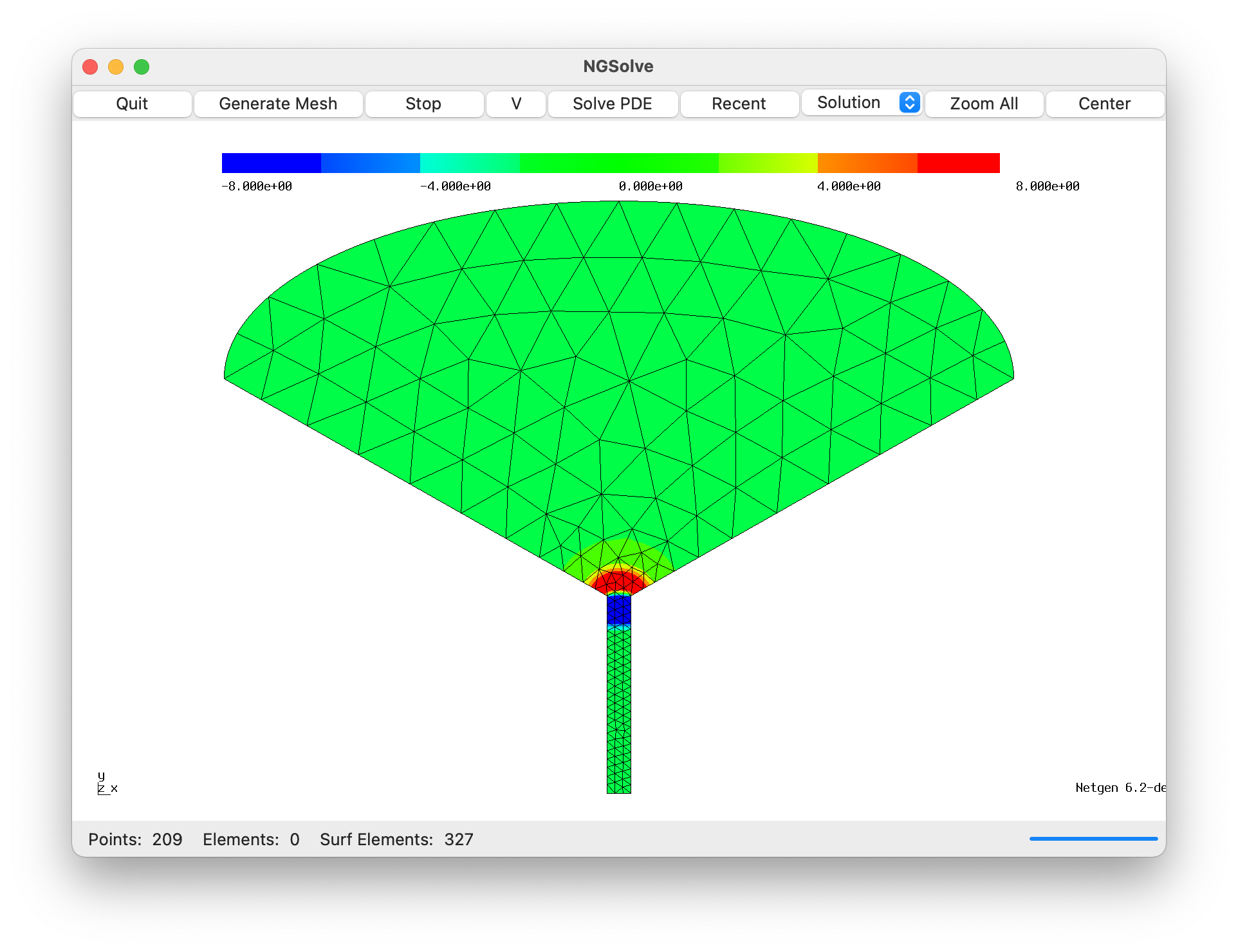}}
  }
  \subcaptionbox{Solution at $t =3t_{\mathrm{slab}}$. \label{fig:tents:sol3}}{
    \frame{
      \includegraphics[trim=270  200   270   300, clip,  %
      width=0.3\linewidth]{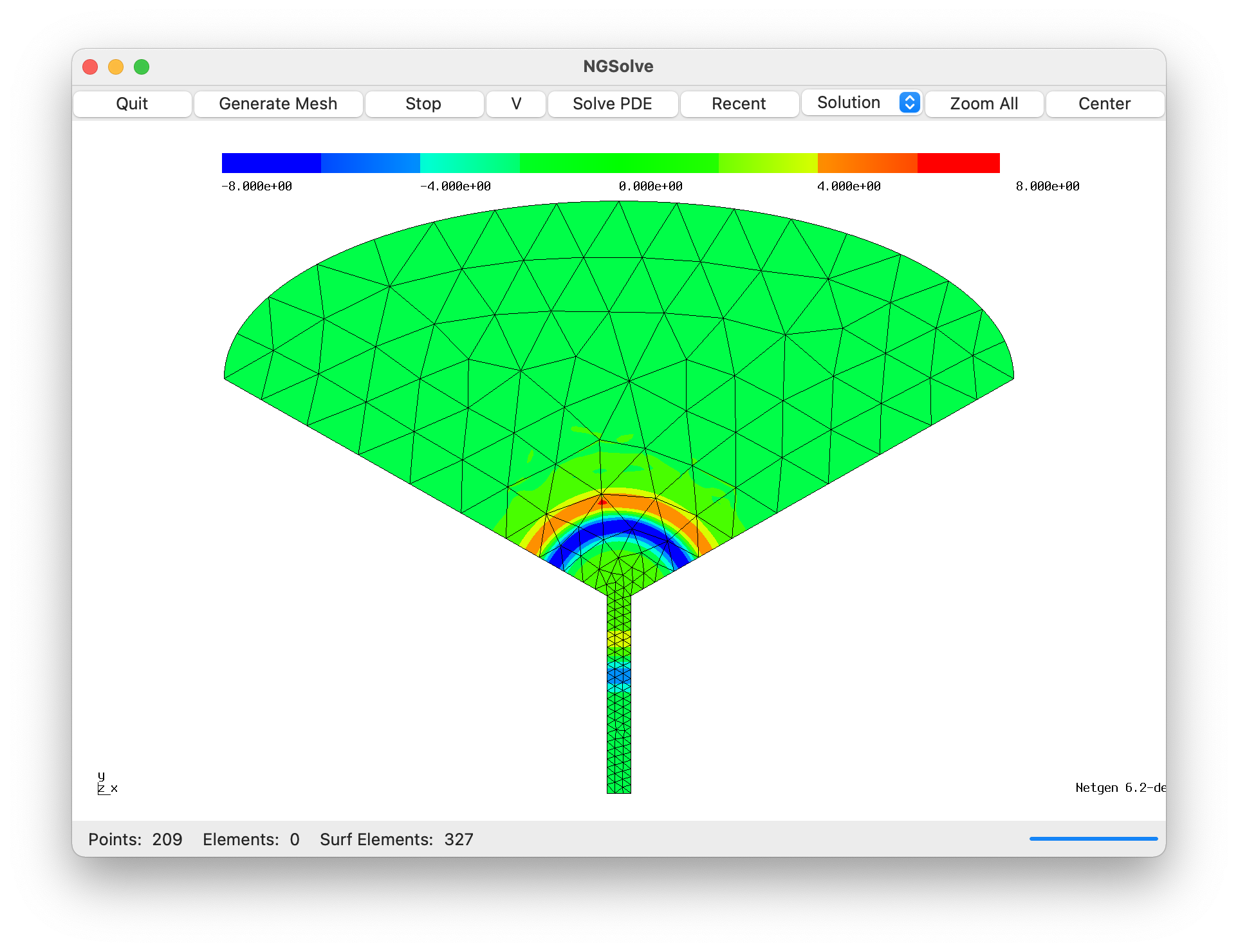}}
  }
  \caption{Tents, layers, advancing fronts, and solution snapshots
    from an acoustic wave simulation using the MTP scheme of
    \S\ref{ssec:arbitrary-order-sat}:
    Figures~\ref{fig:tents:L1}--\ref{fig:tents:L4} show successive
    layers of tents, viewed at an angle to show where small and large
    features meet (and time $t$ is in the vertical direction).
    Figure~\ref{fig:tents:Lall} shows how tents asynchronously
    enable both large and
    small time advances within a spacetime slab
    $\om \times (0, t_{\mathrm{slab}})$.  Plots of a time evolving
    wave solution at $t = 0, t_{\mathrm{slab}},$ and
    $3t_{\mathrm{slab}}$, computed using the tents
    in Figure~\ref{fig:tents:Lall},  are shown in
    Figures~\ref{fig:tents:sol0},~\ref{fig:tents:sol1},
    and~\ref{fig:tents:sol3}, respectively.}
  \label{fig:tents}
\end{figure}

One reason for working with tents is the ease by which {\em causality} can
be imposed, simply by adjusting the height of the {\em tent pole}, the
line segment connecting $(\vtx, \vpbot^\vtx(\vtx))$ to
$(\vtx, \vptop^\vtx(\vtx))$.  By the definition of hyperbolicity, the
maximal wave speed $c$ is finite.  When each spacetime tent encloses
the domain of dependence of all its points, causality holds. In other
words, if 
\[
  \| (\gradx \varphi_i) (x)\|_2 < \frac{1}{c}, \qquad x \in \om, 
\]
on all advancing fronts, then causality holds. In this paper, in place
of the strict inequality, we assume we are given a strict upper bound
$\cmax $ for the maximal wave speed $c$ and that our mesh of spacetime
tents is constructed so that
\begin{equation}
  \label{eq:causalitycondition}
  \| (\gradx \varphi_i) (x)\|_2 \le \frac{1}{\cmax}
  <  \frac{1}{c} \qquad x \in \om. 
\end{equation}
We will refer to~\eqref{eq:causalitycondition} as the {\em causality
  condition}. Algorithms for constructing tent meshes
satisfying~\eqref{eq:causalitycondition} can be
found in \cite{ErickGuoySulli05, GopalSchobWinte17}. They terminate
filling the spacetime $\om \times (0, T)$ with tents so that
the first and
the last advancing fronts, $C_0$ and $C_m$, are flat, i.e.,
$\vphi_0(x) \equiv 0$ and $\vphi_m(x) \equiv T$.
In practice, we often select (as in Figure~\ref{fig:tents})
a $t_{\mathrm{slab}}<T$ such that
$T = n_{\mathrm{slabs}} t_{\mathrm{slab}}$,  
run the tent meshing algorithm to
fill the subregion
$\om \times (0, t_{\mathrm{slab}})$, and then translate the
same mesh to  reuse it $n_{\mathrm{slabs}}-1$ times to 
cover the full spacetime domain $\om \times (0, T)$ without further
meshing overhead.
However, for the ensuing analysis, we ignore this extra subdivision
into smaller spacetime slabs (so $t_{\mathrm{slab}}=T$ henceforth).
With  this background
in mind, we proceed to  show 
how to map tents and construct an
MTP scheme after fixing a model problem.

\section{A model problem for analysis}  \label{sec:model-probl}

In this section, we describe a model symmetric linear hyperbolic
system and its MTP discretization that we shall be occupied
with. Although MTP schemes can be applied much more generally (as
shown in~\cite{GopalSchobWinte17, GopalSchobWinte20}), we restrict to
this model for transparently presenting the essential new ideas
needed for a convergence analysis.

\subsection{A symmetric linear hyperbolic system}

Let $L$ be a positive integer (and let $N$, as before,
denote the spatial dimension). Suppose that
$\Lc j: \om \to \RRR^{L \times L}$, for $j=1,\ldots, N$ and
$\Gc: \om \to \RRR^{L \times L}$ are symmetric bounded matrix-valued
functions and suppose $\Gc$ is uniformly positive definite in
$\bar\om$.  Our model problem is the following linear hyperbolic
system of $L$ equations, in $L$ unknowns, denoted by $u(x, t)$, or in
terms of  scalar 
components,  by $u_k(x, t)$:
\begin{equation}
  \label{eq:34}
  \dt g(u) + \divx f(u) = 0,  
\end{equation}
with
\[
  [g(u)]_l = \sum_{k=1}^L \Gc_{lk} u_k, \qquad [f(u)]_{lj} =
  \sum_{k=1}^L \Lc j_{lk} u_k.
\]
We have restricted ourselves to time-independent coefficients, and
shall do so also for boundary conditions, which are expressed using a
matrix field $\Bc: \partial \om \to \RRR^{L \times L}$.  Boundary
conditions are considered in the form studied by
Friedrichs~\cite{Fried58}, namely, 
\begin{equation}
  \label{eq:FreidrichsBC}
    (\Dc - \Bc) u = 0 \qquad \text { on } \partial \om
\end{equation}
where 
\begin{equation}
  \label{eq:D}
  \Dc = \sum_{j=1}^N n_j \Lc j.
\end{equation}
Note that $\Dc$, in general, depends on the point $x \in \d\om$ as
well as on $n(x)$, the spatial unit outward normal at $x$, and when
we wish to emphasize these dependencies, we shall denote it by $\Dcn$,
$\Dc(x)$, 
or $\Dcxn$.  Note that later in the sequel $n$ (or $n(x)$)
will also be used to
generically denote the outward unit normal on boundaries of other
domains (such as mesh elements).  Of course, the hyperbolic
system~\eqref{eq:34} must also be supplemented with an initial
condition,
\begin{equation}
  \label{eq:ic}
  u(x, 0) = u^0(x) \qquad  x \in \om
\end{equation}
at time $t=0$ for some given initial data $u^0$.

Friedrichs~\cite{Fried58}
identified conditions on the operator $\Bc$ for obtaining
well-posed boundary value problems. We shall borrow the same
conditions and impose them pointwise on $\d \om \times (0, T)$.  In
particular, at each $x \in \d\om$ and $t \in (0, T)$, we assume
$\ker( \Dc - \Bc ) + \ker (\Dc + \Bc) = \RRR^L$ and
$ \Bc + \Bc^t \ge 0.$ (The latter
is to be interpreted as
$(\Bc(x) + \Bc(x)^t) y \cdot y = 2\Bc(x) y \cdot y \ge 0$
for all vectors $y \in \RRR^L$ at any $x \in \d\om$.)  To simplify the analysis, we shall
also assume that 
\begin{equation}
  \label{eq:divL0}
  \sum_{j=1}^N \partial_j \Lc j = 0
\end{equation}
in the sense of distributions (so jumps in $\Lc j (x)$ are allowed so
long as~\eqref{eq:divL0} holds) and 
\begin{equation}
  \label{eq:G_L_const}
  \Gc \text{ and } \Lc j \text{ are constant on each mesh element }
  K \in \oh.
\end{equation}
Across a mesh facet $F$ of normal $n_F$, it is easy to see
that~\eqref{eq:divL0} implies the continuity of
$\DcnF.$ As long as we obtain this normal continuity (which is
needed in our analysis), 
assumption~\eqref{eq:divL0} can be relaxed to other forms (such
as what is suggested in~\cite[equation (A.4)]{ErnGuerm06}),
at the expense of a few
additional technicalities.
Assumption~\eqref{eq:G_L_const} allows us to zero out some projection
error 
terms instead of tracking such small terms in error estimates.

\subsection{Mapping  tents}

Consider a tent expressed as in~\eqref{eq:Tv}.  We map to a tent $\Tv$
from a cylindrical tensor product domain $\Tvh = \ov \times (0, 1)$
using the map $\vPhi^\vtx(x, \hat t) = (x, \vphi^\vtx(x, \hat t))$
where
\[
  \vphi^\vtx(x, \hat t) = (1- \hat t) \vpbot^\vtx(x) + \hat
  t\vptop^\vtx(x)
  = \vpbot^\vtx(x) + \hat t\, \delta^\vtx(x)
\]
(see \cite[Fig.~2]{GopalSchobWinte17} for an illustration of this map).
We will drop the superscript $\vtx$ when it is obvious from context.
Clearly, $\vPhi (\Tvh) = \Tv$ and the interior of $\Tvh$ is mapped
one-to-one onto the interior of $\Tv$ (but the map is not one-to-one
from the boundary of $\Tvh$ to the boundary of $\Tv$).  The coordinate
$\hat t$ in $\Tvh$ will be referred to as the {\em pseudotime}
coordinate. Note that the time coordinate $t$ in the physical
spacetime twists space and pseudotime together since it is given by
$t = \vphi(x, \hat t)$.

The Jacobian matrix of the map $\vPhi$ is easily computed:
  \begin{equation}
  \label{eq:gradxtPhi}
  \gradxt \vPhi =
  \begin{bmatrix}
    I & 0 \\ (\gradx\vphi)^T &\delta
  \end{bmatrix}
\end{equation}
Using it in a Piola transformation, it can be shown 
\cite[Theorem~2]{GopalSchobWinte17} that
the mapped hyperbolic solution
$\hat u = u \circ \vPhi$ satisfies 
\begin{equation}
  \label{eq:1}
  \dth \left[
    g(\uh) - f(\uh) \gradx \varphi
    \right]+ \divx \big[ \delta f(\uh)\big] = 0,\qquad \text{ in } \Tvh,
\end{equation}
whenever $u$ solves~\eqref{eq:34}.  MTP schemes proceed by
solving~\eqref{eq:1} by various discretization strategies
(particularly those that leverage the tensor product nature of space
and pseudotime in $\Tvh$) and then pulling back the computed solution
to the physical spacetime. We now proceed to discuss a discretization
strategy that uses a DG spatial discretization on $\ov$. We will
combine it with pseudotime discretizations later (in Section~\ref{sec:fully-discr}).

\subsection{Spatial discretization on mapped tents}

The mapped equation~\eqref{eq:1} can be approximated by any standard
scheme that allows for a time dependent mass matrix.  Our focus is on
discontinuous Galerkin (DG) discretizations. Since there are numerous
flavors of DG schemes and numerical fluxes, for efficiently covering
various choices, we use the framework of~\cite{ErnGuerm06} (see
also~\cite{BurmaErnFerna10}), a simplified version of which, adapted
to our purposes, is described next. Their framework is motivated by
the previously mentioned early work of Friedrichs~\cite{Fried58}, and
other previous authors have also been similarly motivated while
considering boundary conditions, notably~\cite{FalkRicht99}
and~\cite{MonkRicht05} in the context of tents and spacetime methods.

We assume that $\oh$ is a shape regular conforming simplicial mesh of
domain $\om$.  We use $a \lesssim b$ to indicate that there is a
constant $C>0$ such that $a \le C b $ and that the value of $C$ may be
chosen independently of any spatial mesh chosen from the shape regular
family.  The value of the generic constant $C$ in ``$\lesssim$'' may
differ at different occurrences and is allowed to depend on the wave
speed, material coefficients, spatial polynomial degree, etc.  Let
$P_p(K)$ denote the space of polynomials of degree at most $p$
restricted to the domain $K$ and let $V_h = \{ v : v|_K \in P_p(K)^L$
for any mesh element $K \in \oh\}$.  Let $\ovh$ denote the collection
of elements in the vertex patch $\ov$ of a mesh vertex~$v$.  Let
$\Vhv$ denote the restriction of the spatial DG space on $\ov$ and let
$\psi_j(x)$ denote a basis for $\Vhv$. The semidiscrete approximation
of $\uh$ is of the form
\[
  \uh_h(x, {\hat{t}}) = \sum_j U_j({\hat{t}}) \psi_j(x).
\]

We consider a DG semidiscretization of~\eqref{eq:1} of the form
displayed next in~\eqref{eq:DG}. In the spatial integrals there and
throughout, we do not explicitly indicate the measure (volume or
boundary measure) as it will be understood from context.  For each
fixed $0 < \hat t\le 1$, the function $\uh_h(\cdot, \hat t)$ satisfies
\begin{equation}
  \label{eq:DG}
  \int_\ov \dth \big[ g(\uh_h) - f(\uh_h) \gradx \varphi\big] \cdot  v
  = \sum_{K \in \ovh} \bigg[
  \int_K \delta f(\uh_h) : \gradx v -
  \int_{\partial K} \delta \Fh_{\uh_h} \cdot v \bigg],
\end{equation}
for all $ v \in \Vhv,$ where the numerical flux $\Fh_{\uh_h}$ on an
element boundary $\d K$ is defined using the values of $\uh_h$ from
the element $K$ as well as from the neighboring element $K_o$, as
follows. For any $w \in \Vhv$, at a point
$x \in \partial K \cap \partial K_o$, letting $w_o = w|_{K_o}$, define
\[
  \{ w \}(x) = \frac 1 2 ( w + w_o),
  \qquad
  \jmp{w}(x) = w - w_o.
\]
Then, $\Fh_w$  is assumed to take the form
\begin{equation}
  \label{eq:NumFlux}
  \Fh_w = 
    \begin{cases}
      \Dc \{ w \} + \Sc \jmp{w}
      & \text{ on } \partial K \setminus  \partial \om,
      \\
      \frac 1 2 (\Dc + B) w
      & \text{ on } \partial K \cap  \partial \om.
    \end{cases}
\end{equation}
Here, $\Dc$ is defined by~\eqref{eq:D}, with the vector
$n$ now denoting the
outward unit normal on~$\partial K$, 
$
  \Sc: \Fc_i \to \RRR^{L \times L}
$
denotes a 
stabilization matrix on interior facets of 
$\Fc_i = \bigcup\{ \d K \setminus \d\om: \; K \in \ovh \}$,
and
$B: \d \om \to  \RRR^{L \times L}$,
is used to model the exact boundary condition $\Bc$ with any needed extra
stabilization on boundary facets.
Note that $S$ is
single-valued on $\Fc_i$ (while $\Dc$ is multivalued and
depends on the sign
of the normal on an element boundary). 
Let $\| \cdot \|_2$ denote the Euclidean norm (of a vector, or the
induced norm for a matrix) and let
$|y|_S = (S y \cdot y)^{1/2}$ and $| y |_B = (B y \cdot y)^{1/2}$.
We assume that
\begin{subequations}
  \label{eq:DGdesign}
  \begin{align}
    \label{eq:kernelB}
    \ker\left(\Dc(x) - \Bc(x)\right)\,
    & \subseteq\, \ker\left(\Dc(x) - B(x)\right)     &&   x \in \d\om, 
    \\
    \label{eq:Bpd}
    B(x) + B(x)^t & \ge 0,  \qquad \| B(x) \|_2 \lesssim 1,
    &&   x \in \d\om, 
    \\ \label{eq:12}
    \left(\Dc(x) + B(x)\right) y \;\cdot z \;
    & \lesssim\; \| y \|_2\; |z|_B,    
                  &&  x \in \d\om,
                     \; y, z \in \RRR^L,
    \\
    \label{eq:Spd}
    S(x) + S(x)^t & \ge 0,  \qquad \| S(x) \|_2 \lesssim 1,
                                                     && x \in \Fc_i,
    \\
    \label{eq:Sbound}
    S(x) y \cdot z
    \;  & \lesssim\; | y|_S \;|z |_S, 
                  &&  x \in \Fc_i,
                     \; y, z \in \RRR^L.
    \\
    \label{eq:11}
    \Dc(x) y \cdot z\;  & \lesssim\; \| y\|_2 \;|z |_S, 
                  &&  x \in \Fc_i,
                     \; y, z \in \RRR^L.
  \end{align}
\end{subequations}
These form a subset of the  ``design conditions for DG methods''
in~\cite{ErnGuerm06} that we shall need for our analysis in
the next section.

\subsection{Examples}
\label{ssec:examples}

The following examples show a variety of equations, boundary
conditions, and their
well-known discretizations that conform to the  framework
introduced above. We will work out the first example in some detail
and describe the rest telegraphically since similar examples can be
found in the literature~\cite{BurmaErnFerna10, ErnGuerm06,
  FalkRicht99, MonkRicht05}.

\begin{example}[Maxwell equations with impedance boundary conditions]
  \label{eg:MaxwellImpedance}
  Suppose we are given
  electric permittivity $\veps(x)$ and magnetic
  permeability $\mu(x)$
  as positive functions on $\om$ and let $Z>0$.
  The Maxwell system for
  the electric field $E(x, t)$ and magnetic field $H(x, t)$, with
  impedance boundary conditions, consists of 
  \begin{subequations}
    \begin{align}
      \label{eq:36}
      \veps \dt E - \curl H   = 0,
                               \quad\quad
             \mu \dt H + \curl E & =0,
      && \text{ in } \om
         \times (0, T),
      \\
      \label{eq:35}
      n \times E - Z n \times (H \times n)& = 0,
      && \text{ on } \d\om
      \times (0, T).      
    \end{align}
  \end{subequations}
  To fit this into the prior setting, we put
  $  N=3,$ $ L=6,$ 
  and 
  \begin{align*}
    u =
    \begin{bmatrix}
      E \\ H 
    \end{bmatrix}, 
    \quad 
    \Lc j =
    \begin{bmatrix}
      0 & [\epsilon^j] \\
      [\epsilon^j]^t & 0 
    \end{bmatrix},
                       \quad
                       \Gc =
                       \begin{bmatrix}
                         \veps I &  0 \\
                         0       & \mu I 
                       \end{bmatrix},
                                   \quad
   \Bc =
                                   \begin{bmatrix}
                                     0 & \Nc \\
                                     \Nc & -2Z \Nc^t \Nc
                                   \end{bmatrix},
  \end{align*}
  where $\epsilon^j$ is the $3 \times 3$ matrix whose $(l,m)$th entry
  equals the value of the Levi-Civita alternator $\epsilon_{jlm}$ and
  $\Nc = \sum_{j=1}^3 n_j \epsilon^j \in \RRR^{3 \times 3}$.  Noting
  that $ \Nc^t = -\Nc,$ $\Nc E = E \times n,$ and
  $ \Nc^t \Nc H = n \times (H \times n),$
  it is easy to see that
  \begin{equation}
    \label{eq:37}
    \Dc =
    \begin{bmatrix}
      0 & \Nc \\
      \Nc^t & 0 
    \end{bmatrix},
    \quad (\Dc - \Bc)
    \begin{bmatrix}
      E \\ H 
    \end{bmatrix}
    =
    2
    \begin{bmatrix}
      0 \\  n \times E - Z n \times (H \times n)
    \end{bmatrix},
  \end{equation}
  so that~\eqref{eq:FreidrichsBC} indeed imposes the impedance
  boundary condition~\eqref{eq:35}.

  Next, for the DG discretization, set
  \[
    B =
    \frac{1}{1+Z} 
    \begin{bmatrix}
      2\Nc \Nc^t & (1-Z)\Nc \\
      (1-Z)\Nc & 2Z\Nc \Nc^t
    \end{bmatrix},
    \quad 
    S = 
    \begin{bmatrix}
      \Nc^t\Nc  & 0  \\
      0 & \Nc^t\Nc 
    \end{bmatrix}.
  \]
  Since $\| n \times (E \times n) \|_2^2 = \| \Nc E\|_2^2$, 
  setting $b =  n \times E - Z  n\times (H \times n)$,  it is easy to
  see that 
  \begin{equation}
    \label{eq:38}
    (\Dc - B)
    \begin{bmatrix}
      E \\ H 
    \end{bmatrix}
    = \frac{2}{1+Z}
    \begin{bmatrix}
      n \times b \\
      b 
    \end{bmatrix},
    \quad
    \left|
      \begin{bmatrix}
        E \\ H 
      \end{bmatrix}
    \right|_B^2 
    = \frac{2 }{1+Z}
    \left(
      \| n \times E \|_2^2 + Z \| n \times H\|_2^2
    \right),
  \end{equation}
  and the latter shows~\eqref{eq:Bpd} since $Z>0$.  The first identity
  of~\eqref{eq:38}, together with~\eqref{eq:37} implies
  \eqref{eq:kernelB}. Similar computations establish~\eqref{eq:12}.
  Finally, noting that $\left| [\begin{smallmatrix} E \\ H
  \end{smallmatrix}
  ]\right|_S^2 = \| n \times E \|_2^2 + \| n \times H \|_2^2,$ the
remaining properties~\eqref{eq:Spd}
and~\eqref{eq:11}, are easily established. Note that if the above
$S$ is scaled by 1/2, then the conditions of~\eqref{eq:DGdesign} continue
to hold and we get the ``classic upwind'' flux~\cite[p. 434]{HW08}
for Maxwell equations.
\end{example}

\begin{example}[Maxwell equations with perfect electric boundary
  conditions]
  \label{eg:MaxwellPEC}
  Reconsider Example~\ref{eg:MaxwellImpedance} with $Z=0$. This yields
  a Dirichlet boundary condition modeling the electrical isolation by
  a perfect electric conductor. Substituting $Z=0$ in previous
  choices of
  $\Bc, B, $ and $S$, one can show that all conditions
  of~\eqref{eq:DGdesign} continue to hold even though
  the $|\cdot |_B$ seminorm is now weaker: 
  $\left| [\begin{smallmatrix} E \\ H
  \end{smallmatrix}
  ]\right|_B^2 = 2\| n \times E \|_2^2$ from~\eqref{eq:38}.
\end{example}

\begin{example}[Advection]
  \label{eg:advection}
  The advection problem  with inflow boundary conditions, 
  \[
    \dt u + \divx ( b u)= 0\quad
    \text{ in }\om \times (0, T), \qquad
    u = 0 \quad \text{ on } \din \om \times (0, T),
  \]
  where 
  $b : \om \to \RRR^N,$
  is some given vector field, 
  fits the above setting with $L=1$ (keeping the
  spatial dimension $N$ arbitrary),
  $\Lc j = b_j \in \RRR^{1 \times 1},$ $\Gc =1$ and $\Dc = b \cdot n$.
  Examples  of $b$ that satisfy~\eqref{eq:divL0}
  and~\eqref{eq:G_L_const} are offered by divergence-free functions in the 
  lowest order Raviart-Thomas finite element space.
  The inflow boundary condition is recovered by setting
  $\Bc = |b\cdot n|$.
  The choices
  \[
     B = |b\cdot n|, \qquad \Sc = \frac 1 2 | b \cdot n|,
  \]
  are easily seen to verify~\eqref{eq:DGdesign} and yield the
  classical upwind DG discretization.
\end{example}

\begin{example}[Wave equation with Dirichlet boundary conditions]
  \label{eg:WaveDirichlet}
  Rewriting
  \begin{equation}
    \label{eq:39}
    \partial_{tt} \phi - \Delta \phi=0,
    \quad
    \text{ in }\om \times (0, T), \qquad
    \phi = 0 \quad \text{ on } \d\om \times (0, T),
  \end{equation}
  as a first order hyperbolic system for
  $L=N+1$ 
  variables using
  the flux $q = -\gradx \phi$
  and $\mu = \dt \phi$, we match the prior framework. Put 
  $u =
  [
  \begin{smallmatrix}
    q \\ \mu
  \end{smallmatrix}
  ]$, 
  $\Gc$ to identity,
  and $\Lc j = e_j e_{N+1}^t + e_{N+1} e_j^t$ (using the standard
  unit basis vectors $e_j$ of $\RRR^{N+1}$). The 
  Dirichlet boundary
  conditions on $\phi$ can be imposed by requiring that $\mu=0$ on
  $\d\om \times (0, T)$, which is what~\eqref{eq:FreidrichsBC} yields
  with 
  \[
    \Bc =
    \sum_{j=1}^N n_j (e_{N+1} e_j^t - e_j e_{N+1}^t)
    + 2 e_{N+1} e_{N+1}^t
    =
    \begin{bmatrix}
      0 & -n \\
      n^t & 2 
    \end{bmatrix}.
  \]
  All conditions of~\eqref{eq:DGdesign} are satisfied by setting
  \[
    B = \Bc, \qquad
    \Sc = 
    \begin{bmatrix}
      n n^t & 0 \\
      0 & 1
    \end{bmatrix}.
  \]
  These choices yield the DG discretization with upwind-like fluxes for the
  wave equation. 
\end{example}

\begin{example}[Wave equation with Robin boundary conditions]
  \label{eg:WaveRobin}
  We reconsider Example~\ref{eg:WaveDirichlet} after replacing the
  boundary condition in~\eqref{eq:39} by
  $\d \phi/ \d n + \rho \d_t \phi = 0$ for some $\rho > 0$, or
  equivalently, in terms of the variables $q, \mu$ introduced there,
  \begin{equation}
    \label{eq:40}
    n \cdot q - \rho \mu =0, \quad \text{ on } \d\om \times (0, T).    
  \end{equation}
  Keeping the same $S$ and changing 
  \[
    \Bc =
    \begin{bmatrix}
      0 & n \\
      -n^t & 2 \rho 
    \end{bmatrix},
    \quad
    B =
    \begin{bmatrix}
      \rho^{-1} n n^t & 0 \\
      0 &  \rho 
    \end{bmatrix},
  \]
  all conditions of~\eqref{eq:DGdesign} are satisfied.
\end{example}

\begin{example}[Wave equation with Neumann boundary conditions]
  \label{eg:WaveNeumann}
  This is the boundary condition obtained when  $\rho=0$
  in~\eqref{eq:40}. The boundary condition as well as the conditions
  of~\eqref{eq:DGdesign} are verified with
  \[
    \Bc =
    \begin{bmatrix}
      0 & n \\
      -n^t & 0
    \end{bmatrix},
    \quad
    B =
    \begin{bmatrix}
      n n^t & n  \\
      -n^t  &  0
    \end{bmatrix},
  \]
  keeping  $S$ unchanged.
\end{example}

\section{Analysis of semidiscretization} \label{sec:semidisc}

In this section, we prove stability of the
DG semidiscretization~\eqref{eq:DG}
on the
advancing fronts. When combined with standard finite element
approximation estimates, this leads to the main result of this section,
namely the error estimate of Theorem~\ref{thm:semidiscrete} below.

\subsection{Preparatory observations}

Let $H^1(\ovh)$ denote the broken Sobolev space isomorphic to
$\Pi_{K\in \ovh} H^1(K)$. Since our variables have $L$ unknown
components, we
will need $L$ copies of this space. To ease notation, we abbreviate
$\Hhv = H^1(\ovh)^L,$  $\Hv = H^1(\ov)^L$,
and $\Lv = L^2(\ov)^L$.  Since the traces of a
$w \in \Hhv$ on element boundaries are square integrable, the
following definition of
the bilinear form
$a: \Hhv\times \Hhv \to \RRR$,
with the numerical fluxes $\Fh_w$ from~\eqref{eq:NumFlux}, makes sense:
\begin{align}
  \nonumber
  a(w, v)
  & = \sum_{K \in \ovh} \bigg[
    \int_K \delta f(w) : \gradx v -
    \int_{\partial K} \delta \Fh_w \cdot v\bigg].
\end{align}
Let $(w, v)_D$ denote the inner product in $L^2(D)$, or its Cartesian
products, for any domain $D$, and let $\| w\|_D = (w, w)_D^{1/2}$.
Using this notation, we may alternately
write $a(\cdot, \cdot)$ as
\begin{equation}
  \label{eq:6}
  a(w, v)  = \sum_{K \in \ovh}
  \left[
     \sum_{j=1}^N (\delta \Lc j w, \partial_j v)_K 
     -
     (\delta \Fh_w,  v)_{\d K}
     \right].
\end{equation}
When the domain is the often used vertex patch $\ov$, we use the
abbreviated notation
$ (w, v)_\vtx = (w, v)_\ov = \int_\ov w \cdot v.  $ Using it, we
define $\Mco: \Lv \to \Lv$, and
$\Mcs: \Lv \to \Lv$ by
\begin{align}
  \label{eq:M0}
  (\Mco w,   v)_\vtx
  & =
    (\Gc w, v)_\vtx  - 
    \sum_{j=1}^N ((\partial_j \vpbot)\, \Lc j w, v)_\vtx,
  \\ \label{eq:M1}
  (\Mcs w,  v)_\vtx
  & =
    \sum_{j=1}^N (
    (\partial_j \delta)\, \Lc j w, v)_\vtx
\end{align}
for all $w, v \in \Lv$. Let $M(\tau) = M_0 - \tau M_1$. (We will often abbreviate $M(\tau)$  to
simply $M$.)  Using these definitions,
we may now rewrite~\eqref{eq:DG}
succinctly as 
$(\d_{\hat t} (M(\hat t) \hat u_h), v)_\vtx = a( \hat u_h, v).$
Let $\| v \|_\vtx = (v, v)_\vtx^{1/2},$ and for any operator
$\mathcal{O}$ on $\Lv$, let 
\begin{equation}
  \label{eq:20}
\|  \mathcal{O}\|_\vtx = 
\sup_{v, w \in \Lv} 
\frac{(\mathcal{O} v, w)_\vtx}{ \| v \|_\vtx \| w \|_{\vtx}}.  
\end{equation}

\begin{lemma}
  \label{lem:Mspd}
  The causality condition implies that $M(\tau)$ is a selfadjoint
  positive definite operator on $\Lv$ for any $0 \le \tau \le 1$ and
  that there is a mesh-independent constant $C_{\mathcal{L}, c}$
  (depending on $\Lc j$ and $c$) such that
  \begin{equation}
    \label{eq:23}
    \left(1 - \frac{c}{\cmax}
    \right) (\Gc w, w)_\vtx
    \le (M w, w)_\vtx
    \le C_{\mathcal{L}, c} (\Gc w, w)_\vtx
  \end{equation}
  holds for all $w \in \Lv$. Moreover, 
  \[
    \max\big(\| M_0\|_\vtx, \| M_0^{-1}\|_\vtx,
    \| M_1\|_\vtx, \|M\|_\vtx,
    \|M^{-1}\|_\vtx \big)
    \lesssim 1.
  \]
\end{lemma}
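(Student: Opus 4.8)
The plan is to establish the assertions in three stages --- self-adjointness, the two-sided estimate~\eqref{eq:23} (which already contains positive definiteness), and then the five operator-norm bounds --- the recurring point being that causality forces the ``twist'' term generated by the map to be a strict contraction relative to $\Gc$.

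First, combining \eqref{eq:M0}--\eqref{eq:M1} with $M(\tau)=\Mco-\tau\Mcs$ and the interpolation formula $\vphi^\vtx(x,\hat t)=\vpbot^\vtx(x)+\hat t\,\delta^\vtx(x)$ gives the compact form
\[
  (M(\tau)w,v)_\vtx=(\Gc w,v)_\vtx-\sum_{j=1}^N\big((\partial_j\vphi^\vtx(\cdot,\tau))\,\Lc j w,v\big)_\vtx .
\]
Self-adjointness of $M(\tau)$ for every $\tau\in[0,1]$ --- and of $\Mco$ and $\Mcs$ separately, via $\tau=0,1$ --- is then immediate from the symmetry of $\Gc(x)$ and each $\Lc j(x)$ together with the fact that $\partial_j\vphi^\vtx(\cdot,\tau)$ is a scalar field.

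Next, for~\eqref{eq:23}, I would feed in causality. Since $\vpbot^\vtx$ and $\vptop^\vtx$ are restrictions to $\ov$ of advancing fronts, \eqref{eq:causalitycondition} gives $\|\gradx\vpbot^\vtx\|_2\le 1/\cmax$ and $\|\gradx\vptop^\vtx\|_2\le 1/\cmax$ a.e.\ on $\ov$; because $\gradx\vphi^\vtx(\cdot,\tau)=(1-\tau)\gradx\vpbot^\vtx+\tau\gradx\vptop^\vtx$ is a convex combination of these, $\|\gradx\vphi^\vtx(x,\tau)\|_2\le1/\cmax$ for all $\tau\in[0,1]$. Then I would invoke the defining property of the maximal wave speed $c$: for any $\xi\in\RRR^N$ and $x\in\ov$, the eigenvalues of the pencil $\big(\sum_j\xi_j\Lc j(x),\Gc(x)\big)$ are bounded in modulus by $c\|\xi\|_2$, which after the substitution $z=\Gc(x)^{1/2}y$ yields the pointwise bound $|\sum_j\xi_j\,\Lc j(x)y\cdot y|\le c\|\xi\|_2\,(\Gc(x)y\cdot y)$. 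Taking $\xi=\gradx\vphi^\vtx(x,\tau)$, $y=w(x)$, and integrating over $\ov$ gives
\[
  \Big|\sum_{j=1}^N\big((\partial_j\vphi^\vtx(\cdot,\tau))\,\Lc j w,w\big)_\vtx\Big|\le\frac{c}{\cmax}\,(\Gc w,w)_\vtx ,
\]
so~\eqref{eq:23} holds with lower constant $1-c/\cmax$ and upper constant $1+c/\cmax$; as $c<\cmax$, the lower constant is positive, proving positive definiteness. (The middle term can instead be bounded crudely by $\big(\sum_j\|\Lc j\|_2^2\big)^{1/2}\,\|\gradx\vphi^\vtx(\cdot,\tau)\|_2\,\|w\|_\vtx^2$, which is where the advertised dependence of $C_{\mathcal{L},c}$ on the $\Lc j$ and $c$ enters.)

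Finally, the operator-norm bounds: since $\Gc$ is bounded and uniformly positive definite on $\bar\om$, the form $(\Gc\cdot,\cdot)_\vtx$ is equivalent to $\|\cdot\|_\vtx^2$ with mesh-independent constants, so applying~\eqref{eq:23} with $\tau=\hat t$ and with $\tau=0$, and using that a self-adjoint positive definite $\Oc$ satisfies $\|\Oc\|_\vtx=\lambda_{\max}(\Oc)$ and $\|\Oc^{-1}\|_\vtx=\lambda_{\min}(\Oc)^{-1}$, gives $\|\Mco\|_\vtx,\|M\|_\vtx\lesssim1$ and $\|\Mcoinv\|_\vtx,\|M^{-1}\|_\vtx\lesssim1$. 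For $\Mcs$, the cleanest route is the identity $\Mcs=\Mco-M(1)$, whence $\|\Mcs\|_\vtx\le\|\Mco\|_\vtx+\|M(1)\|_\vtx\lesssim1$ (alternatively one estimates $(\Mcs w,v)_\vtx$ directly, using $\|\gradx\delta^\vtx\|_2\le2/\cmax$). I do not foresee a genuine obstacle; the only steps requiring a little care are extracting the pointwise inequality $|\sum_j\xi_j\Lc j y\cdot y|\le c\|\xi\|_2(\Gc y\cdot y)$ from the notion of maximal wave speed, and checking that the causality bound valid on the fronts $\varphi_i$ persists for the tent interpolants $\vphi^\vtx(\cdot,\tau)$ --- both dispatched by the convexity argument above.
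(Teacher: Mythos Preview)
Your proposal is correct and follows essentially the same route as the paper: the same compact formula for $(M(\tau)w,v)_\vtx$, the same spectral bound $|\sum_j\xi_j\Lc j y\cdot y|\le c\|\xi\|_2(\Gc y\cdot y)$ extracted from the definition of the maximal wave speed (the paper writes this out via an eigenbasis expansion rather than the substitution $z=\Gc^{1/2}y$), and the same convexity argument to propagate the causality bound to $\vphi^\vtx(\cdot,\tau)$. Your handling of $\|M_1\|_\vtx$ via $M_1=M_0-M(1)$ and your observation that the upper constant can be taken as $1+c/\cmax$ are minor streamlinings of what the paper does (it bounds $M_1$ directly from $|\partial_j\delta|\lesssim1$ and gets the upper bound from boundedness of the $\Lc j$), but the substance is the same.
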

\begin{proof}
  Let $\vphi = \tau \vptop + (1- \tau) \vpbot$. 
  Since 
  \begin{equation}
    \label{eq:3}
    (M(\tau) v, w)_\vtx
    = 
    (\Gc v,  w)_\vtx - \sum_{j=1}^N
    ((\partial_j \vphi)\, \Lc j v, w)_\vtx
  \end{equation}
  and $\d_j \varphi$ is constant on each element, 
  the selfadjointness is immediate from the symmetry of $\Lc j$ and
  $\Gc$.
  It remains to prove the stated positive definiteness.
  In accordance with~\eqref{eq:D}, let 
  $\Dcnu = \sum_{j=1}^N \nu_j \Lc j.$
  Recall~\cite[p.~53]{Dafer10} that hyperbolicity implies the existence of
  real eigenvalues $\lambda_i$ and accompanying eigenvectors $e_i$
  (forming a complete set) satisfying
  $
    \Dcnu e_i = \lambda_i \Gc e_i
  $
  for any  unit vector $\nu \in \RRR^N$.  Since
  $\Dcnu $ and $\Gc$ are symmetric, it is
  easy to see that the eigenvectors $e_i$ must be orthogonal in the
  $\ip{x, y}_\Gc = \Gc x \cdot y$ inner product. Expanding any vector $v
  \in \RRR^L$ in the eigenbasis $e_i$ as follows,
  \[
    v =
    \sum_{j=1}^L v_i e_i \quad \text{ with } v_i = \ip{v, e_i}_\Gc,
  \]
  and recalling  that the maximal wave speed $c$ is the maximum of
  all such $|\lambda_i|$, 
  \begin{align*}
    \Dcnu v \cdot v 
    & = \sum_{i=1}^L v_i \lambda_i \Gc e_i \cdot v
      = \sum_{i=1}^L  \lambda_i \, |v_i|^2
    \le c \sum_{i=1}^L |v_i|^2 = c \,\ip{v, v}_\Gc.
  \end{align*}
  Using this inequality with $\nu = (\gradx \vphi) / \| \gradx
  \vphi \|_2$, we have 
  \begin{align}
    \nonumber
    \Gc v \cdot  v -  \sum_{j=1}^N (\partial_j \vphi)   \Lc j v
    \cdot v
    & =
      \Gc v \cdot  v -  
      \|\gradx  \vphi\|_2 \;
      \Dcnu v \cdot  v
    \\ \label{eq:4}
    & \ge \left(
      1 -  \|\gradx  \vphi\|_2  \,c \right)
      \; \Gc v \cdot  v. 
  \end{align}
  Since $\varphi$ is a convex combination of $\vpbot$ and $\vptop$,
  both of which satisfy the causality
  condition~\eqref{eq:causalitycondition}, we have
  $\| \gradx \vphi \|_2 \le 1/\cmax$.  Applying this, after
  using~\eqref{eq:4} in~\eqref{eq:3}, the proof of the lower bound
  of~\eqref{eq:23} is finished. The upper bound is a consequence of
  the boundedness of the $\Lc j$ and $\Gc$. Finally, the stated
  operator norm bounds on $M(\tau),$ $M_0 = M(0)$, and their inverses
  follow immediately from~\eqref{eq:23}.   The estimate for the
  operator norm of $M_1$ also
  follows easily since $|\d_j \delta| \lesssim 1$.
\end{proof}

Let $\Fc^\vtx$ denote the set of facets (i.e., $(N-1)$-subsimplices)
of the simplicial mesh $\ovh$ of the vertex patch $\ov.$ This set is
partitioned into the collection of facets on the boundary
$\partial \ov$ of the vertex patch, denoted by $\Fcvb$, and the
remainder, denoted by $\Fcvi$, the set of interior facets of
$\ovh$. We assume that each facet $F$ of the entire spatial mesh $\oh$
is endowed with a unit normal $n_F$ whose orientation is arbitrarily
fixed, unless if $F$ is contained in the global boundary
$\partial\om$, in which case it points outward. Then, for any
$x \in F$, set
$\jmp{u}_F(x) = \lim_{\veps \to 0} u( x + \veps n_F) - u(x - \veps
n_F)$. Note that $\jmp{u}_F$ agrees with the previously defined jump
$\jmp{u}$ on element boundaries, except possibly for a sign. Let
\[
  d(w, v) = -\left[
    a(w, v) + a(v, w) + (M_1 w, v)_\vtx \right]
\]
for $w, v \in {\Hhv}$. The first identity of the
next lemma shows that $d(w, w) \ge 0$
due to~\eqref{eq:Bpd} and \eqref{eq:Spd}.

\begin{lemma}
  \label{lem:intgparts}
  For all $v, w \in {\Hhv}$, 
  \begin{align}
    \label{eq:8}
    d(w, w) 
    & =
    \sum_{F \in \Fcvi} 2 \big(\delta\, \Sc \jmp{w}_F,  \jmp{w}_F\big)_F
    +
      \sum_{F \in \Fcvb}  (\delta \,B w, w)_F,
    \\
    \label{eq:2}
    -a(v, w)
    & = \sum_{K \in \ovh} 
      ( \divx( \delta f(v)), w)_K
    \\\nonumber 
    & + \sum_{F \in \Fcvi}
      \left[
      (\delta \DcnF \jmp{v}_F, \{ w\})_F +
      (\delta S \jmp{v}_F, \jmp{w}_F)_F\right]
      \\ \nonumber 
    &
      - 
      \sum_{F \in \Fcvb}
      \frac 1 2 (\delta (\Dc - B) v, w)_F.
  \end{align}
\end{lemma}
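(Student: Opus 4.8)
The plan is to prove both identities by integrating by parts element by element over the patch mesh $\ovh$ and then reassembling the element-boundary integrals into integrals over the facets in $\Fcvi$ and $\Fcvb$; the only properties used are the symmetry and piecewise constancy of the $\Lc j$ from \eqref{eq:G_L_const}, the normal continuity of $\DcnF$ across interior facets (which \eqref{eq:divL0} provides), the explicit form of the numerical flux \eqref{eq:NumFlux}, and the degeneracy of $\delta$ on the part of $\partial\ov$ not lying in $\partial\om$.

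I would establish \eqref{eq:2} first. Starting from the expression \eqref{eq:6} for $-a(v,w)$ and integrating by parts in each $K\in\ovh$ (legitimate since $\Lc j$ is constant on $K$), the term $-\sum_j(\delta\,\Lc j v,\partial_j w)_K$ turns into the volume contribution $(\divx(\delta f(v)),w)_K$ plus the element-boundary contribution $-(\delta\,\Dc v,w)_{\partial K}$, where $\Dc=\sum_j n_j\Lc j$ with $n$ the outward normal of $K$. Combining with the flux term already present in \eqref{eq:6}, one arrives at $\sum_{K}(\divx(\delta f(v)),w)_K+\sum_{K}(\delta(\Fh_v-\Dc v),w)_{\partial K}$, so the whole matter reduces to computing $\Fh_v-\Dc v$ on facets. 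From \eqref{eq:NumFlux} and the identity $\{v\}-v=-\tfrac12\jmp{v}$, one has $\Fh_v-\Dc v=-\tfrac12\Dc\jmp{v}+S\jmp{v}$ on an interior facet and $\Fh_v-\Dc v=-\tfrac12(\Dc-B)v$ on a facet in $\partial\om$. Summing the two element contributions meeting at a facet $F\in\Fcvi$, using that $\Dc$ and the element-side jump both flip sign with the normal while $S$ and the average $\{w\}$ are single valued, and that $\DcnF$ is common to both sides of $F$, produces exactly the interior-facet sum of \eqref{eq:2}; the boundary-facet sum comes from the $F\subset\partial\om$ case. On the remaining facets of $\partial\ov$ the weight $\delta$ vanishes identically — it is piecewise linear and zero at every vertex of $\partial\ov$ — so those facets contribute nothing and may be absorbed into the sum over $\Fcvb$.

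For \eqref{eq:8} the plan is to rewrite $a(w,w)$ directly. Since each $\Lc j$ is symmetric and constant on $K$, we have $\Lc j w\cdot\partial_j w=\tfrac12\partial_j(\Lc j w\cdot w)$, and integrating by parts the volume part of $a(w,w)$ gives
\[
  \sum_{K\in\ovh}\sum_{j=1}^N(\delta\,\Lc j w,\partial_j w)_K
  \;=\;\tfrac12\sum_{K\in\ovh}(\delta\,\Dc w,w)_{\partial K}\;-\;\tfrac12(M_1 w,w)_\vtx,
\]
the last term appearing precisely because $\partial_j$ also hits the nonconstant weight $\delta$, cf.\ \eqref{eq:M1}. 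Substituting this into $a(w,w)$ and rearranging yields $d(w,w)=-[\,2a(w,w)+(M_1w,w)_\vtx\,]=\sum_{K\in\ovh}(\delta(2\Fh_w-\Dc w),w)_{\partial K}$, and it remains to evaluate $2\Fh_w-\Dc w$ facet by facet. On an interior facet, $2\Fh_w-\Dc w=\Dc w_o+2S\jmp{w}$; adding the two element contributions at $F\in\Fcvi$ makes the transport parts cancel — because $\Dc$ is symmetric, so $\Dc w_o\cdot w=\Dc w\cdot w_o$ — leaving only $2(\delta\,S\jmp{w}_F,\jmp{w}_F)_F$. On a facet $F\subset\partial\om$, $2\Fh_w-\Dc w=Bw$, contributing $(\delta\,Bw,w)_F$; and once more $\delta\equiv0$ on $\partial\ov\setminus\partial\om$. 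This is \eqref{eq:8}, and the asserted $d(w,w)\ge0$ then follows at once because $\delta\ge0$ on $\ov$, $S+S^t\ge0$ by \eqref{eq:Spd}, and $B+B^t\ge0$ by \eqref{eq:Bpd}.

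The step that requires genuine care is the facet reassembly: one has to pin down the orientation conventions relating the element-side jumps $\jmp{\cdot}$ and averages $\{\cdot\}$ to the globally oriented jump $\jmp{\cdot}_F$, and then use the symmetry of $\DcnF$ together with its normal continuity — which is exactly what causes the transport contributions on $\Fcvi$ to cancel in \eqref{eq:8} and to collapse to the single $\{w\}$-term in \eqref{eq:2}. The degeneracy of $\delta$ on the interior part of $\partial\ov$, though elementary, is what lets all patch-boundary facets be treated uniformly, so that no function values outside the vertex patch ever need to be invoked.
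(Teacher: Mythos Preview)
Your proposal is correct and follows essentially the same route as the paper: element-by-element integration by parts using the symmetry and piecewise constancy of the $\Lc j$, followed by reassembly over interior and boundary facets with the observation that $\delta$ vanishes on $\partial\ov\setminus\partial\om$. The only cosmetic difference is that the paper establishes \eqref{eq:8} first (via the identity $2\sum_j(\delta\Lc j w,\partial_j w)_K=-\sum_j((\partial_j\delta)\Lc j w,w)_K+(\delta\Dc w,w)_{\partial K}$, which is exactly your displayed formula) and then says \eqref{eq:2} is analogous, whereas you do them in the opposite order and spell out the facet bookkeeping more explicitly.
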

\begin{proof}
  Integrating by parts on an element $K \in \ovh$,
  \[
     \sum_{j=1}^N (\delta\, \Lc j w,  \d_j w)_K =
     (\delta \Dc w,   w)_{\d K}
    - \sum_{j=1}^N (\d_j (\delta \Lc j w),  w)_K.
  \]
  Applying the product rule to expand the derivative in the last
  term, using \eqref{eq:divL0}, and the symmetry of $\Lc j$, we obtain 
  \begin{equation}
    \label{eq:5}
    2 \sum_{j=1}^N (\delta \Lc j w, \d_j w)_K
    =
    -\sum_{j=1}^N  ( w, (\d_j \delta) \Lc j w)_K
    +
    (\delta w,  \Dc w)_{\d K}.
  \end{equation}
  Using this in the first term of the definition of
  $a(w,w)$, we have
  \begin{align*}
    d(w, w)
    & = -2 \,a(w, w)
      -
      \sum_{j=1}^N (\partial_j \delta\; \Lc j w  , w)_\vtx
      && \hspace{-4em}\text{by~\eqref{eq:M1}},
    \\
    & =
      \sum_{K \in \ovh}
      \big[ -( \delta \, w,  \Dc w)_{\d K}
      + 2 (\delta \,w,  \Fh_w)_{\d K}\big]
    && \hspace{-4em}\text{by~\eqref{eq:6} and~\eqref{eq:5},}
    \\
    & = \sum_{K \in \ovh}
      \big[-(\delta \, w,  \Dc w) _{\d K \cap \d\om}
      + (\delta \,w,  (\Dc + B) w)_{\d K \cap \d\om} \big]
    \\
    & + \sum_{K \in \ovh}
      \big[
      -(\delta \, w, 
         \Dc w)_{\d K \setminus \d\om}
      +( \delta \, w,
      2\Dc \{ w \} + 2\Sc \jmp{w} )_{\d K \setminus \d\om}\big],
  \end{align*}
  where we used the definition of the numerical flux in
  \eqref{eq:NumFlux}, splitting the right hand side sum into two to 
  accomodate  the two cases in \eqref{eq:NumFlux}.  
  The first sum,
  when rewritten using boundary facets, 
  immediately yields the last term of~\eqref{eq:8}
  since 
  $\delta=0$ on $\d\ov \setminus \d\om$.
  The second sum can
  be 
  rearranged to a sum over interior facets
  $F \in \Fcvi$, where $\jmp{\DcnF}_F=0$ due
  to~\eqref{eq:divL0}, which allows for further
  simplifications, eventually yielding the
  other term on the right hand side of~\eqref{eq:8}.

  The proof of~\eqref{eq:2} involves a  similar integration by parts
  starting from~\eqref{eq:6} and a similar
  rearrangement of sums over element boundaries
  to sums over facets.
\end{proof}

We  use  $C^s(0, \tau, X)$, for some Banach space $X$,
to denote the $X$-valued
function $w: [0, \tau] \to X$
that is $s$ times continuously differentiable. For any
$v, w \in C^1(0, 1, {\Hhv})$, define
\[
  b_\tau(v, w) =
  \int_0^\tau
  \big(\dth [ M(\hat t) v(\hat t)], w({\hat t})\big)_\vtx \; d\hat t
  \;-
  \int_0^\tau a(v({\hat{t}}), w({\hat{t}}))
  \; d\hat t.
\]
Note that the temporal snapshots $w(\hat t)$ and $v(\hat t)$ used
above, being  in $\Hhv$,  are admissible as arguments of the form $a$.
For any $z \in \Lv$, define
$
  \| z \|_{M(\tau)} = \left(M(\tau) z, z \right)_\vtx^{1/2}.
$
This is a norm due to  Lemma~\ref{lem:Mspd}.

\begin{lemma}
  \label{lem:bdiag}
  For all $w \in C^1(0, 1, {\Hhv})$,
  \[
    2\, b_\tau(w, w)
    = \| w (\tau)\|_{M(\tau)}^2 - \| w (0) \|_{M(0)}^2
    + \int_0^\tau d(w({\hat{t}}), w({\hat{t}}))\; d\hat t.
  \]
\end{lemma}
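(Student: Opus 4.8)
The plan is to reduce the claimed identity to the fundamental theorem of calculus applied to the scalar function $\hat t \mapsto \|w(\hat t)\|_{M(\hat t)}^2 = (M(\hat t) w(\hat t), w(\hat t))_\vtx$, together with a matching of leftover terms against the definition of $d$. First I would unfold the definitions. Since $d(w,w) = -\bigl[2\,a(w,w) + (M_1 w, w)_\vtx\bigr]$ and $2\,b_\tau(w,w) = 2\int_0^\tau (\dth[M(\hat t) w(\hat t)], w(\hat t))_\vtx\, d\hat t - 2\int_0^\tau a(w(\hat t),w(\hat t))\, d\hat t$, the $-2\int_0^\tau a(w,w)$ contribution cancels on both sides, so the assertion is equivalent to the purely temporal identity
\[
  2\int_0^\tau \bigl(\dth[M(\hat t) w(\hat t)], w(\hat t)\bigr)_\vtx\, d\hat t
  = \|w(\tau)\|_{M(\tau)}^2 - \|w(0)\|_{M(0)}^2
  - \int_0^\tau (M_1 w(\hat t), w(\hat t))_\vtx\, d\hat t .
\]

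The key step is then to differentiate $F(\hat t) := (M(\hat t) w(\hat t), w(\hat t))_\vtx$ in $\hat t$. Because $M(\hat t) = M_0 - \hat t\, M_1$ is affine with $M'(\hat t) = -M_1$, and $w \in C^1(0,1,\Hhv)$ so that $\dth w$ exists, the product rule gives $F'(\hat t) = -(M_1 w, w)_\vtx + (M \,\dth w, w)_\vtx + (M w, \dth w)_\vtx$ and $\dth[M w] = M\, \dth w - M_1 w$. Invoking the selfadjointness of $M(\hat t)$ from Lemma~\ref{lem:Mspd} to symmetrize the cross term, $(M\,\dth w, w)_\vtx = (M w, \dth w)_\vtx$, whence $F'(\hat t) = 2(M\,\dth w, w)_\vtx - (M_1 w, w)_\vtx = 2\bigl(\dth[M w], w\bigr)_\vtx + (M_1 w, w)_\vtx$. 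Since the $C^1$ regularity of $w$ makes $F$ continuously differentiable on $[0,\tau]$, integrating this identity and applying the fundamental theorem of calculus yields exactly the displayed temporal identity, and combining with the cancellation of the $a$-terms completes the proof.

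The only point demanding care is this product-rule/selfadjointness manipulation: one must use that $M(\hat t)$ is selfadjoint for each fixed $\hat t$ to identify $(M\,\dth w, w)_\vtx$ with $(M w, \dth w)_\vtx$, and that $w \in C^1(0,1,\Hhv)$ guarantees the regularity needed for the fundamental theorem of calculus. Beyond that, the argument is pure substitution of the definitions of $b_\tau$, $d$, $M(\hat t)$, and $\|\cdot\|_{M(\tau)}$, so I do not anticipate any genuine obstacle. This lemma is simply the space-discrete, pseudotime-continuous energy identity that will drive the semidiscrete stability estimates in the remainder of the section.
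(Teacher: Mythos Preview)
Your proposal is correct and follows essentially the same approach as the paper: both derive the pointwise identity $\dth(M(\hat t)w,w)_\vtx = 2(\dth[M(\hat t)w],w)_\vtx + (M_1 w,w)_\vtx$ via the product rule and the selfadjointness of $M(\hat t)$, then integrate and match against the definition of $d$. The paper packages the product-rule step slightly differently (writing it as $\frac{d}{d\hat t}\int_{\ov} Mw\cdot w = \int_\ov 2\,\dth(Mw)\cdot w - \int_\ov \frac{dM}{d\hat t}w\cdot w$, which also implicitly uses the symmetry of $M$), but the argument is the same.
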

\begin{proof}
  The proof relies on a simple but key identity, which is best
  expressed writing $M$ for $M(\hat t) = M_0 - \hat t M_1$, as
  follows:
  \begin{align*}
    \frac{d }{d \hat t}
    \int_{\ov} M w \cdot w
    =
    \int_\ov 2
    \frac{\d }{\d \hat t}(M w) \cdot w -
    \int_\ov 
    \frac{d M}{d\hat t} w \cdot w.
  \end{align*}
  It implies
  $ 2\left( \dth [M(\hat t) w], w\right)_\vtx = \dth ( M(\hat t) w,
  w)_\vtx - (M_1 w, w)_\vtx.$ Using this in the definition of
  $b_\tau$, we obtain 
  \begin{align*}
   2 b_\tau(w, w)
    & =
      \int_0^\tau
      \left[
      \frac{d }{d \hat t}\,
      \big( M(\hat t) w({\hat{t}}), w({\hat{t}})\big)_\vtx
      - \big(M_1 w({\hat{t}}), w({\hat{t}})\big)_\vtx
      - 2\,a(w({\hat{t}}), w({\hat{t}}))\right]
       d\hat t
    \\
    & = (M(\tau) w, w)_\vtx - (M(0) w, w)_\vtx
      - \int_0^\tau \left[
      2\,a(w({\hat{t}}), w({\hat{t}}))
      + (M_1 w({\hat{t}}), w({\hat{t}})) \right]\, d\hat t
  \end{align*}
  so the result follows from the definition of $d(\cdot, \cdot)$.
\end{proof}

\subsection{Stability on spacetime surfaces}

We will first establish a bound on the exact solution on spacetime
tents (Proposition~\ref{prop:exact_stability}), which will then serve
as motivation for our approach to proving stability
(Lemma~\ref{lem:stability_semidiscrete}). Let $\dtop \Tv, \dbot \Tv$
and $\dbdr \Tv$ denote the top, bottom, and boundary parts,
respectively, of the boundary of a tent $\Tv$, i.e.,
\begin{gather*}
  \dtop \Tv  = \{ (x, t) \in \d\Tv: \; t = \vptop(x) \}, 
  \qquad 
  \dbot \Tv  = \{ (x, t) \in \d \Tv:  \; t = \vpbot(x) \}
  \\
  \dbdr \Tv  =
  \{ (x, t) \in \d \Tv:  \; (x, t) \text{ is neither in }
  \dtop\Tv \text{ nor in } \dbot\Tv \}.
\end{gather*}
Note that $\dbdr \Tv $ is empty whenever $\d\ov$ does not
intersect $\d\om$.  The next result shows that the
solution on $\dtop \Tv$ can be bounded, in a
tent-specific norm, by that on $\dbot\Tv$.
Specifically, defining
\begin{equation}
  \label{eq:41}
  \| w \|_{\dtb \Tv}^2
  = \int_\ov
  \big[
  g( w(x, \vptb(x))) - f(w(x, \vptb(x)))\, \gradx\vptb
  \big] \cdot w(x, \vptb(x)),  
\end{equation}
for 
$\tb \in \{\mathrm{top}, \mathrm{bot}\}$, 
it follows from the next result that
$\| u \|_{\dtop \Tv} \le \| u \|_{\dbot \Tv}$ (because 
 $\| u \|_{\dtop \Tv} $ and $\| u \|_{\dbot \Tv}$ coincide
with $\| \uh \|_{M(1)}$ and $\| \uh \|_{M(0)},$ respectively).

\begin{proposition} \label{prop:exact_stability}
  On a spacetime tent $\Tv$ satisfying causality,
  suppose a solution $u$ of
  \begin{subequations}
    \label{eq:tenteqforu}
    \begin{align}
      \dt g(u) + \divx f(u) & = 0 && \text{ in } \Tv,
      \\
      \label{eq:7}
      (\Dc - \Bc) u & = 0 &&\text{ on } \dbdr \Tv,
    \end{align}
  \end{subequations}
  is smooth enough for $\hat u = u \circ \vPhi$ to be in
  $C^1(0, 1, {\Hv})$. Then $\hat u = u \circ \vPhi$
  at pseudotime~$\tau,$ for any $0 \le \tau \le 1,$ satisfies
  \[
    \| \hat u (\tau)\|_{M(\tau)} \le \| \hat u (0) \|_{M(0)}.
  \]
\end{proposition}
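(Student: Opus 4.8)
The plan is to establish that the exact mapped solution $\hat u$ satisfies the same variational identity as the semidiscrete solution, but tested against the whole broken space $\Hhv$ instead of only the discrete subspace $\Vhv$, and then to substitute this into the energy identity of Lemma~\ref{lem:bdiag}. The central intermediate claim is the consistency identity
\[
  \big(\dth[M(\hat t)\,\hat u(\hat t)],\, v\big)_\vtx = a(\hat u(\hat t), v),
  \qquad v \in \Hhv, \ \hat t \in (0,1).
\]
To prove it, I would begin from the mapped equation~\eqref{eq:1}, which $\hat u = u\circ\vPhi$ satisfies because $u$ solves the first equation of~\eqref{eq:tenteqforu}. Since $(M(\hat t)w, v)_\vtx = (g(w) - f(w)\gradx\varphi, v)_\vtx$ with $\varphi = \hat t\,\vptop + (1-\hat t)\vpbot$ (as in~\eqref{eq:3}), the left-hand side above equals $-(\divx[\delta f(\hat u)], v)_\vtx$ in the broken sense over the elements of $\ovh$. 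Integrating by parts on each $K \in \ovh$ and using that $f(w)\,n = \Dc w$ on $\partial K$, this becomes $\sum_{K\in\ovh}\big[\int_K \delta f(\hat u):\gradx v - \int_{\partial K}\delta\,\Dc\hat u\cdot v\big]$, which by the definition~\eqref{eq:6} of $a$ differs from $a(\hat u, v)$ only by the residual facet terms $\sum_{K\in\ovh}\int_{\partial K}\delta\,(\Fh_{\hat u} - \Dc\hat u)\cdot v$.

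The next step is to show these residual terms vanish, which is where the hypotheses enter. On each interior facet $F \in \Fcvi$ the continuous field $\hat u$ has $\jmp{\hat u}_F = 0$ and $\{\hat u\} = \hat u$, so by~\eqref{eq:NumFlux} we have $\Fh_{\hat u} = \Dc\hat u$ from both adjacent elements and the two contributions cancel. On facets $F \in \Fcvb$ contained in $\partial\ov\setminus\partial\om$ the weight $\delta$ vanishes, so only facets contained in $\partial\om$ --- those constituting $\dbdr\Tv$ --- survive; there $\Fh_{\hat u} - \Dc\hat u = -\frac{1}{2}(\Dc - B)\hat u$ by~\eqref{eq:NumFlux}, while the boundary condition~\eqref{eq:7} gives $\hat u \in \ker(\Dc - \Bc) \subseteq \ker(\Dc - B)$ by the design condition~\eqref{eq:kernelB}, so $(\Dc - B)\hat u = 0$ there. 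This proves the consistency identity; choosing $v = \hat u(\hat t)$ and integrating over $\hat t \in (0,\tau)$ then gives $b_\tau(\hat u, \hat u) = 0$.

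Finally, since $\hat u \in C^1(0,1,\Hv) \subset C^1(0,1,\Hhv)$, Lemma~\ref{lem:bdiag} applied with $w = \hat u$ yields
\[
  0 = 2\,b_\tau(\hat u,\hat u) = \|\hat u(\tau)\|_{M(\tau)}^2 - \|\hat u(0)\|_{M(0)}^2 + \int_0^\tau d(\hat u(\hat t), \hat u(\hat t))\,d\hat t .
\]
By the identity~\eqref{eq:8} the integrand is nonnegative (using $\delta \ge 0$ together with~\eqref{eq:Spd} and~\eqref{eq:Bpd}; in fact its interior part vanishes since $\hat u$ is continuous), so $\|\hat u(\tau)\|_{M(\tau)}^2 \le \|\hat u(0)\|_{M(0)}^2$, which is the assertion. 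The step I expect to be the main obstacle is the bookkeeping in the integration-by-parts and facet-rearrangement: one must verify that the numerical flux collapses to $\Dc\hat u$ for a continuous field on interior facets (with the correct normals on the two sides) and that the surviving boundary contributions are precisely the ones annihilated by~\eqref{eq:kernelB} together with~\eqref{eq:7}. This is routine but must be done carefully with respect to signs of normals and to where $\delta$ degenerates.
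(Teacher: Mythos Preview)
Your proposal is correct and follows essentially the same approach as the paper: establish the consistency identity $b_\tau(\hat u,\hat u)=0$, then invoke Lemma~\ref{lem:bdiag} and the nonnegativity of $d(\cdot,\cdot)$ from~\eqref{eq:8}. The only cosmetic difference is that the paper obtains the consistency identity by quoting the alternate representation~\eqref{eq:2} of Lemma~\ref{lem:intgparts} directly (its jump and boundary terms drop for the same reasons you cite), whereas you re-derive that representation by an explicit integration by parts and facet bookkeeping; the paper also tests only against $v\in\Hv$ rather than all of $\Hhv$, which suffices since you ultimately set $v=\hat u$.
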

\begin{proof}
  Since $\hat u$ satisfies the mapped equation
  $\dth (g(\hat u) - f(\hat u) \gradx \varphi)
  + \divx\big[\delta f(\hat
  u)\big]=0$, we have
  $ (\dth [M(\hat t) \hat u], v)_\vtx + (\divx(\delta f(\hat u)),
  v)_\vtx = 0 $ for all $v \in {\Hv}$. Now, observe that
  \[
    -a(\hat u({\hat{t}}), v)    =
    \big(\divx\big[\delta f(\hat u({\hat{t}}))\big], v\big)_\vtx 
  \]
  by Lemma~\ref{lem:intgparts}: indeed, the jumps in \eqref{eq:2} of
  the lemma vanish when applied to $\hat u({\hat{t}})$ since
  $\hat u({\hat{t}}) \in {\Hv}$, and moreover, the last term of
  \eqref{eq:2} also vanishes due to \eqref{eq:7} and \eqref{eq:kernelB}.
  Thus,
  \[
  (\dth [M(\hat t) \hat u], v)_\vtx
    - a(\hat u({\hat{t}}), v) = 0
  \]
  for all $v \in {\Hv}$ and each $0 \le \hat t\le 1$.  Integrating
  over $\hat t$ from 0 to $\tau$, we obtain
  \begin{equation}
    \label{eq:exactu-bform}
    b_\tau(\hat u, w) = 0, 
  \end{equation}
  for all $w \in C^1(0, 1, {\Hv}).$ Choosing $w = \hat u$ and
  applying Lemma~\ref{lem:bdiag}, we have
  \[
    \| \hat u (\tau)\|_{M(\tau)}^2 - \| \hat u (0) \|_{M(0)}^2
    + \int_0^\tau d(\hat u({\hat{t}}), \hat u({\hat{t}}))\; d\hat t = 0.
  \]
  Finally, we apply~\eqref{eq:8} of Lemma~\ref{lem:intgparts}. Noting
  that $\jmp{\hat u({\hat{t}})}_F = 0$ on all interior facets $F$ and
  recalling the positivity assumption~\eqref{eq:Bpd} on $B$, we
  complete the proof.
\end{proof}

\begin{definition}[Semidiscrete flow: $\Rsem_h(\tau)$]
  \label{def:semidiscrete}
  For any $0\le \tau \le 1$, define $\Rsem_h(\tau): \Vhv \to \Vhv$
  as follows. Given a $v_h^0 \in \Vhv$, let $v_h \in C^1(0, 1, \Vhv)$
  solve
  \begin{align}
    \label{eq:9v}
    \left(
    \dth\big[g(v_h) - f(v_h)\gradx
    \varphi\big],      w\right)_\vtx
    & = a(v_h(\hat t), w),  && 0 < \hat t \le 1,
    \\ \nonumber 
    v_h( 0) & = v_h^0,         && \hat t = 0,  
  \end{align}
  for all $w \in \Vhv$.  Set $\Rsem_h(\tau) v_h^0$ to $v_h(\tau)$. (In
  particular $\Rsem_h(0) v_h^0 = v_h(0) = v_h^0$.)
\end{definition}

\begin{lemma}[Stability of semidiscretization]
  \label{lem:stability_semidiscrete}
  For any $0 \le \tau \le 1$, and any $v \in \Vhv$, 
  \[
    \| \Rsem_h  (\tau) v\|_{M(\tau)} \le \| v \|_{M(0)}.
  \]
\end{lemma}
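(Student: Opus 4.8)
The plan is to mimic the energy argument behind Proposition~\ref{prop:exact_stability}, now applied to the discrete trajectory $v_h = \Rsem_h(\cdot)\, v$ rather than to the exact $\hat u$. First I would recast the defining relation~\eqref{eq:9v} in operator form: for each fixed $0 < \hat t \le 1$,
\[
  \big(\dth[M(\hat t)\, v_h(\hat t)],\, w\big)_\vtx - a(v_h(\hat t), w) = 0
  \qquad \text{for all } w \in \Vhv,
\]
with $M(\hat t) = M_0 - \hat t M_1$ as in Lemma~\ref{lem:Mspd}. The $C^1$ solution posited in Definition~\ref{def:semidiscrete} exists and is unique because, $M(\hat t)$ being invertible and smooth in $\hat t$ by Lemma~\ref{lem:Mspd}, this relation is equivalent to the linear ODE $\dot v_h = M(\hat t)^{-1}(A + M_1) v_h$ on the finite-dimensional space $\Vhv$, where $A$ is the operator on $\Vhv$ representing $a(\cdot,\cdot)$.

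Next, for any $w \in C^1(0,1,\Vhv)$ I would use the temporal snapshot $w(\hat t) \in \Vhv$ as test function in the relation above and integrate over $\hat t \in [0,\tau]$, obtaining $b_\tau(v_h, w) = 0$. Taking $w = v_h$ (admissible since $\Vhv \subset \Hhv$) and invoking Lemma~\ref{lem:bdiag} gives
\[
  \| v_h(\tau)\|_{M(\tau)}^2 - \| v_h(0)\|_{M(0)}^2 + \int_0^\tau d(v_h(\hat t), v_h(\hat t))\, d\hat t = 0 .
\]
Since $v_h(0) = v$, the claimed inequality reduces to showing $d(v_h(\hat t), v_h(\hat t)) \ge 0$ for each $\hat t$.

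For this last point I would appeal to identity~\eqref{eq:8} of Lemma~\ref{lem:intgparts}, which writes $d(w,w)$ as a sum of interior-facet terms $2(\delta\, S\jmp{w}_F, \jmp{w}_F)_F$ and boundary-facet terms $(\delta\, Bw, w)_F$. On the vertex patch $\delta = \vptop - \vpbot \ge 0$, while pointwise $Sy\cdot y = \tfrac12(S+S^t)y\cdot y \ge 0$ by~\eqref{eq:Spd} and $By\cdot y \ge 0$ by~\eqref{eq:Bpd}; hence every integrand is nonnegative and $d(v_h, v_h) \ge 0$. Therefore $\|v_h(\tau)\|_{M(\tau)}^2 \le \|v\|_{M(0)}^2$, and taking square roots finishes the proof.

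There is no genuine obstacle here; the one place needing care is the sign of $d$ for a \emph{discontinuous} discrete function. Unlike in Proposition~\ref{prop:exact_stability}, the jumps $\jmp{v_h(\hat t)}_F$ across interior facets do not vanish, and the kernel condition~\eqref{eq:kernelB} plays no role (the boundary condition is already encoded in the DG form $a$, hence in the boundary term of $d$). What rescues the argument is precisely that the stabilization contributes the manifestly nonnegative interior term above --- which is exactly why the positivity design conditions~\eqref{eq:Spd}--\eqref{eq:Bpd} were imposed.
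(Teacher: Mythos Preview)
Your proposal is correct and follows essentially the same route as the paper: integrate the semidiscrete relation against a time-dependent test function to obtain $b_\tau(v_h,\tilde v)=0$, set $\tilde v=v_h$, invoke Lemma~\ref{lem:bdiag}, and conclude from $d(v_h,v_h)\ge 0$ via~\eqref{eq:8} of Lemma~\ref{lem:intgparts}. Your added remarks on well-posedness of the ODE and the explicit justification of the sign of $d$ through $\delta\ge 0$ together with~\eqref{eq:Spd} and~\eqref{eq:Bpd} are extra detail, not a different argument.
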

\begin{proof}
  The argument is similar to the proof of
  Proposition~\ref{prop:exact_stability}.  Let
  $v_h(\tau) = \Rsem_h(\tau) v$.  We need to bound $v_h(\tau)$ by
  $v_h(0) = v$.
  Replacing $w$
  in~\eqref{eq:9v} by a time-dependent test function
  $\tilde v \in C^1(0, 1, \Vhv)$ and integrating over $\hat t$ from $0$ to
  $\tau$, we have
  \[
    \int_0^\tau \bigg[ (\dth [M(\hat t) v_h], \tilde v)_\vtx
    - a(v_h({\hat{t}}), \tilde v(\hat t)) \bigg] \, d\hat t= 0,
  \]
  or equivalently,
  $
    b_\tau(v_h, \tilde v) = 0, 
    $ for all $ \tilde v \in C^1(0, 1, \Vhv).
  $
  Now, choosing $\tilde v = v_h$ and applying Lemma~\ref{lem:bdiag},
  we find that
  \[
    \| v_h (\tau) \|_{M(\tau)}^2 =
    \| v_h (0) \|_{M(0)}^2  - \int_0^\tau
    d(v_h({\hat{t}}), v_h({\hat{t}})) \; d\hat t.
  \]
  Since $d(v_h ({\hat{t}}), v_h({\hat{t}}))  \ge 0$ by
  \eqref{eq:8} of Lemma~\ref{lem:intgparts}, the proof is complete.
\end{proof}

\subsection{Local error in a  tent}

To estimate the error in the semidiscrete solution, we use, like
previous authors~\cite{CockbShu98}, the spatial $L^2$ projection into
the DG space $\Vhv$.  Let $P_h : \Lv \to \Vhv$ be defined by
$(P_h v, w)_\vtx = (v, w)_\vtx$ for all $v \in \Lv $ and $w \in \Vhv$.
Define
\[
  |v |_d = d(v, v)^{1/2}, \qquad v \in {\Hhv}.
\]
This is a seminorm by \eqref{eq:8} of Lemma~\ref{lem:intgparts} and
our assumptions~\eqref{eq:Bpd} and~\eqref{eq:Spd}.  Let
$h_K = \diam K$ for any spatial element $K$. The next lemma also uses
the broken Sobolev space $H^{s}(\ovh) = \Pi_{K \in \ovh} H^s(K)$, and
\[
  \hv = \max_{K \in \ovh} h_K, 
  \qquad
  |w|_{H^s(\ovh)^L}^2 = \sum_{K\in \ovh} |w|_{H^s(K)^L}^2.
\]

\begin{lemma}
  \label{lem:a_proj_bd}
  If $w \in H^{l}(\ovh)^L$ for some $1 \le l \le p+1$, then for any $v_h
  \in \Vhv$, 
  \[
    a(w - P_h w, v_h)\, \lesssim\, \hv^{l}\, |w|_{H^{l}(\ovh)^L } |v_h|_d.
  \]
\end{lemma}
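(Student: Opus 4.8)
Write $\eta = w - P_h w$; by linearity it suffices to bound $a(\eta, v_h)$. The plan is to show that the \emph{volume} contributions to $a(\eta, v_h)$ vanish identically, and then to control the surviving facet terms by splitting the degenerate weight $\delta$ as $\delta^{1/2}\cdot\delta^{1/2}$ and invoking the design conditions \eqref{eq:DGdesign}. For the first point: in the form \eqref{eq:6} the volume part of $a(\eta, v_h)$ equals $\sum_{K\in\ovh}\sum_j(\delta\,\Lc j\eta,\partial_j v_h)_K = \sum_{K}\sum_j(\eta,\,\delta\,\Lc j\partial_j v_h)_K$ by symmetry of $\Lc j$. On each $K\in\ovh$, assumption \eqref{eq:G_L_const} makes $\Lc j$ constant and $\delta$ is affine, while $\partial_j v_h\in P_{p-1}(K)^L$, so $\delta\,\Lc j\partial_j v_h\in P_p(K)^L$. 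Since the $L^2(\ov)$-projection $P_h$ decouples over elements, $\eta|_K\perp P_p(K)^L$ in $L^2(K)$, and every such term is zero. Hence
\[
  a(\eta, v_h) \;=\; -\sum_{K\in\ovh}(\delta\,\Fh_\eta,\,v_h)_{\d K}.
\]

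Next I would rearrange this boundary sum over facets of $\ovh$, exactly as in the proof of Lemma~\ref{lem:intgparts}. Facets in $\partial\ov\setminus\d\om$ carry the factor $\delta=0$ and contribute nothing; on an interior facet $F\in\Fcvi$ the two element contributions combine because $\Dc$ computed with the facet normal is single-valued there (a consequence of \eqref{eq:divL0}); and on $F\in\Fcvb$ contained in $\d\om$ the boundary case of \eqref{eq:NumFlux} applies. This gives
\begin{align*}
  a(\eta, v_h)
  &= -\sum_{F\in\Fcvi}\Big[\big(\delta\,\DcnF\{\eta\}_F,\jmp{v_h}_F\big)_F
     + \big(\delta\,S\jmp{\eta}_F,\jmp{v_h}_F\big)_F\Big]\\
  &\quad -\tfrac12\sum_{F\in\Fcvb}\big(\delta\,(\Dc+B)\eta,\,v_h\big)_F .
\end{align*}

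For each facet term I would write $\delta=\delta^{1/2}\cdot\delta^{1/2}$ and apply the design conditions pointwise: \eqref{eq:11} gives $\delta\,\DcnF\{\eta\}_F\cdot\jmp{v_h}_F\lesssim(\delta^{1/2}\|\{\eta\}_F\|_2)(\delta^{1/2}|\jmp{v_h}_F|_S)$, \eqref{eq:Sbound} handles the $S$-term, and \eqref{eq:12} the boundary term with $|v_h|_B$ in place of $|\jmp{v_h}_F|_S$. After Cauchy--Schwarz on $F$, each term is bounded by $\|\delta^{1/2}\eta|_F\|_F$ times the square root of the corresponding per-facet piece of $d(v_h,v_h)$ read off from \eqref{eq:8}. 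Summing over the boundedly many (by shape regularity) facets of $\ovh$ and applying the discrete Cauchy--Schwarz inequality yields $a(\eta,v_h)\lesssim\big(\sum_F\|\delta^{1/2}\eta|_F\|_F^2\big)^{1/2}|v_h|_d$. To close, I would use $\|S\|_2,\|B\|_2\lesssim1$ from \eqref{eq:Bpd} and \eqref{eq:Spd} to drop the seminorms on the $\eta$ side, the bound $\|\delta\|_{L^\infty(\ov)}\lesssim\hv$ — which follows from the causality condition \eqref{eq:causalitycondition} and shape regularity, since $\delta$ vanishes at every vertex of the patch other than $\vtx$ — and the standard trace-plus-approximation estimate $\|\eta\|_{\d K}\lesssim h_K^{l-1/2}|w|_{H^{l}(K)^L}$ valid for $1\le l\le p+1$. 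Together these give $\sum_F\|\delta^{1/2}\eta|_F\|_F^2\lesssim\hv\cdot\hv^{2l-1}|w|_{H^{l}(\ovh)^L}^2$, which is the asserted bound.

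The estimate is otherwise routine; the one place demanding care is the accounting of the degenerate weight $\delta$. Two structural facts about it are essential: first, that $\delta$ is elementwise affine, which — together with the constant-coefficient assumption \eqref{eq:G_L_const} — makes the volume terms vanish \emph{exactly} rather than be merely $O(\hv^{l-1/2})$; and second, that $\delta$ must be distributed evenly as $\delta^{1/2}\cdot\delta^{1/2}$ across each facet pairing, so that one half-power combines with $\|\delta\|_{L^\infty}^{1/2}\lesssim\hv^{1/2}$ and the trace estimate to furnish the full power $\hv^{l}$, while the other half-power assembles precisely into the $|\cdot|_d$ seminorm on the $v_h$ side.
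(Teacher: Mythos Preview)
Your proof is correct and follows essentially the same route as the paper's: the volume terms vanish by the orthogonality of $P_h$ combined with \eqref{eq:G_L_const} and the affine nature of $\delta$, the element-boundary sum is regrouped over facets, and the design conditions~\eqref{eq:DGdesign} together with $\delta\lesssim\hv$ and the standard trace/approximation estimate finish the bound. Your explicit $\delta^{1/2}\cdot\delta^{1/2}$ bookkeeping is exactly what the paper does implicitly when it applies Cauchy--Schwarz and invokes~\eqref{eq:8}.
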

\begin{proof}
  Let $e = w - P_hw$. Then the first term on the right hand side of 
  \begin{align*}
    a(e, v_h)
    & = \sum_{K \in \ovh}
      \sum_{j=1}^N (\delta \Lc j e, \partial_j v_h)_K 
     -
      \sum_{K \in \ovh}     (\delta \Fh_e,  v_h)_{\d K}
  \end{align*}
  must vanish, because $\delta \d_j v_h|_K$ is a polynomial of degree
  at most $p$ and $\Lc j$ is constant due to
  assumption~\eqref{eq:G_L_const}. Hence
  \begin{align*}
    a(e, v_h)
    & =
      -\sum_{K \in \ovh}
      (\delta (\Dc \{e\} + S\jmp{e}), v_h)_{\d K \setminus \d\om}
      +
      \frac 1 2 (\delta  ( \Dc + B ) e, v_h)_{\d K \cap \d \om}
    \\
    & =
      \sum_{F \in \Fcvi}
      (\delta \DcnF \{e\}, \jmp{v_h}_F)_F
      - (\delta S\jmp{e}_F, \jmp{v_h}_F)_F
      - \sum_{F \in \Fcvb}
      \frac 1 2 (\delta  ( \Dc + B ) e, v_h)_F
    \\
    & \lesssim
      \sum_{F \in \Fcvi} \int_F \delta \|e \|_2 \,\big|\jmp{v_h}\big|_S
      +
      \sum_{F \in \Fcvb} \int_F \delta \|e \|_2 \,|v_h|_B
  \end{align*}
  due to assumptions~\eqref{eq:Sbound}, \eqref{eq:11},  and~\eqref{eq:12}.
  On any facet $F$ adjacent
  to an element $K$,  by
  shape regularity and the
  well-known properties of $L^2$ projectors, 
  $
    \hv^{1/2} \| e \|_F \lesssim \hv^{l} | w |_{H^{l}(K)^L}.
  $
  Since
  $\delta \lesssim \hv$, the result now follows after applying
  Cauchy-Schwarz inequality and~\eqref{eq:8} of
  Lemma~\ref{lem:intgparts}.
\end{proof}

The next lemma provides control of the local error at any pseudotime
$\tau$ in terms of the initial error. To measure the regularity of
functions $w$ on a tent
$\Tv$, we find it convenient to use (semi)norms computed using the pull
back $w \circ \vPhi$ on $\Tvh$, defined by
\begin{equation}
  \label{eq:spatialseminorm}
  |w |_{\vtx, l} =
  \sup_{0 \le \tau \le 1} | (w \circ \vPhi) (\tau)|_{H^{l}(\ovh)^L}, 
  \qquad
  \|w \|_{\vtx, l} =
  \sup_{0 \le \tau \le 1} \| (w \circ \vPhi) (\tau)\|_{H^{l}(\ovh)^L}.
\end{equation}
Clearly, these are  bounded when $\hat{w} = w \circ \vPhi$ is in
$C^0(0, 1, H^{l}(\ovh)^L)$.

\begin{lemma}[Local error bound]
  \label{lem:local_semidiscrete_error}
  Let $u$ be the exact solution of~\eqref{eq:tenteqforu} on a
  causal tent $\Tv$,
  $\uh = u \circ \vPhi \in C^1(0, 1, \Hv \cap
  H^{p+1}(\ovh)^L)$, and let
  $\uh_h(\tau) = \Rsem_h(\tau) \uh_h^0$ for any $\uh_h^0 \in \Vhv$. Then 
  \[
    \| \uh (\tau) - \uh_h (\tau) \|_{M(\tau)}
    \lesssim
    \| \uh (0) - \uh_h^0 \|_{M(0)}
    + \hv^{p+1}| u |_{\vtx, p+1}.
  \]
\end{lemma}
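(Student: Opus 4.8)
The plan is to split the error via the spatial $L^2$-projection $P_h$: write
\[
  \uh(\tau) - \uh_h(\tau) = \underbrace{(\uh(\tau) - P_h\uh(\tau))}_{=:\;\eta(\tau)} + \underbrace{(P_h\uh(\tau) - \uh_h(\tau))}_{=:\;\theta(\tau)}.
\]
The projection part $\eta(\tau)$ is handled by standard $L^2$-approximation estimates together with the norm equivalence in Lemma~\ref{lem:Mspd}: since $\|\eta(\tau)\|_{M(\tau)} \lesssim \|\eta(\tau)\|_\vtx = \|\uh(\tau) - P_h\uh(\tau)\|_\vtx \lesssim \hv^{p+1}|\uh(\tau)|_{H^{p+1}(\ovh)^L} \le \hv^{p+1}|u|_{\vtx,p+1}$. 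So the work is in estimating the discrete component $\theta(\tau) \in \Vhv$.

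The key step is to derive an evolution equation for $\theta$ and estimate it using the stability machinery already built. First I would test the mapped PDE satisfied by $\uh$ against a time-dependent $w \in C^1(0,1,\Vhv)$ and integrate in $\hat t$; arguing as in Proposition~\ref{prop:exact_stability} (the jump terms and the boundary term in Lemma~\ref{lem:intgparts} vanish because $\uh(\hat t)\in\Hv$ and by~\eqref{eq:7}, \eqref{eq:kernelB}), this gives $b_\tau(\uh, w) = 0$. The semidiscrete solution satisfies $b_\tau(\uh_h, w) = 0$ as well, so by subtraction $b_\tau(\uh - \uh_h, w) = 0$, i.e.\ $b_\tau(\theta, w) = -b_\tau(\eta, w)$ for all $w \in C^1(0,1,\Vhv)$. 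Now unfold $b_\tau(\eta,w)$: the term $\int_0^\tau(\dth[M\eta],w)_\vtx$ vanishes because $\dth(M(\hat t)\eta) = M_0\dth\eta - M_1\eta - \hat t M_1\dth\eta$ and $(\Mco \dth\eta, w)_\vtx$-type terms involving $\dth\eta$ are \emph{not} automatically orthogonal --- here one must be careful. Actually the clean route is: $b_\tau(\eta,w) = \int_0^\tau (\dth[M\eta],w)_\vtx - \int_0^\tau a(\eta,w)$; by Lemma~\ref{lem:a_proj_bd}, $|a(\eta(\hat t), w(\hat t))| \lesssim \hv^{p+1}|u|_{\vtx,p+1}\,|w(\hat t)|_d$, while the mass term needs the identity from Lemma~\ref{lem:bdiag} applied backwards together with the fact that $\eta(\hat t)\perp\Vhv$ in $L^2$.

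Concretely, I would choose the test function $w = \theta$ and apply Lemma~\ref{lem:bdiag} to the left side to get
\[
  \tfrac12\|\theta(\tau)\|_{M(\tau)}^2 - \tfrac12\|\theta(0)\|_{M(0)}^2 + \tfrac12\!\int_0^\tau\! d(\theta,\theta)\,d\hat t = b_\tau(\theta,\theta) = -b_\tau(\eta,\theta).
\]
For the right side, $b_\tau(\eta,\theta) = \int_0^\tau(\dth[M\eta],\theta)_\vtx - \int_0^\tau a(\eta,\theta)$. Writing $(\dth[M\eta],\theta)_\vtx = \dth(M\eta,\theta)_\vtx - (M\eta,\dth\theta)_\vtx - (M_1\eta,\theta)_\vtx$ (using $\dth M = -M_1$), one integrates the total derivative and handles $(M\eta,\dth\theta)_\vtx$ by noting $(P_h\eta,\cdot) = 0$ reduces $M\eta$-against-$\Vhv$ pairings to the commutator $(M - M_0)\eta = -\hat t M_1\eta$ plus $M_0\eta$ terms, all bounded by $\hv^{p+1}|u|_{\vtx,p+1}\|\dth\theta\|_\vtx$. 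The genuinely awkward point --- and I expect this to be the main obstacle --- is controlling $(M\eta,\dth\theta)_\vtx$ and $\dth(M\eta,\theta)_\vtx$ without a bound on $\|\dth\theta\|_\vtx$ (we only control $\|\theta\|_{M}$); the resolution is to not integrate by parts that way but instead keep $\int_0^\tau(\dth[M\eta],\theta)_\vtx$ and bound it by first observing $(\dth[M\eta],\theta)_\vtx = (\dth[M\eta] - P_h\dth[M\eta],\theta)_\vtx$ only if $\dth[M\eta]\perp\Vhv$, which fails since $M$ is not a projection-commuting operator --- so instead one writes $\dth[M\eta] = \dth[(M-I)\eta] + \dth\eta$ and uses that $\dth\eta = \dth\uh - P_h\dth\uh \perp \Vhv$ giving $(\dth\eta,\theta)_\vtx = 0$, leaving only $(\dth[(M-I)\eta],\theta)_\vtx \lesssim (\|\dth[(M-\!I)\eta]\|_\vtx)\|\theta\|_\vtx \lesssim \hv^{p+1}|u|_{\vtx,p+1}\|\theta\|_{M(\hat t)}$ via Lemma~\ref{lem:Mspd} and the regularity of $\uh$. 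Combining, absorbing the $d(\theta,\theta)\ge0$ term, and applying Gr\"onwall's inequality on $\hat t \mapsto \|\theta(\hat t)\|_{M(\hat t)}^2$ yields $\|\theta(\tau)\|_{M(\tau)} \lesssim \|\theta(0)\|_{M(0)} + \hv^{p+1}|u|_{\vtx,p+1}$. Finally, $\|\theta(0)\|_{M(0)} \le \|\uh(0) - \uh_h^0\|_{M(0)} + \|\eta(0)\|_{M(0)} \le \|\uh(0)-\uh_h^0\|_{M(0)} + \hv^{p+1}|u|_{\vtx,p+1}$, and the triangle inequality $\|\uh(\tau)-\uh_h(\tau)\|_{M(\tau)} \le \|\eta(\tau)\|_{M(\tau)} + \|\theta(\tau)\|_{M(\tau)}$ closes the estimate.
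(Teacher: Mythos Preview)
Your overall strategy---split via $P_h$, derive the error equation $b_\tau(\theta,\theta) = -b_\tau(\eta,\theta)$, and invoke Lemma~\ref{lem:a_proj_bd} for the $a(\eta,\theta)$ term---matches the paper. The gap is in your treatment of the mass term $\int_0^\tau(\dth[M\eta],\theta)_\vtx\,d\hat t$. Your claimed bound
\[
  (\dth[(M-I)\eta],\theta)_\vtx \;\lesssim\; \|\dth[(M-I)\eta]\|_\vtx\,\|\theta\|_\vtx \;\lesssim\; \hv^{p+1}\,|u|_{\vtx,p+1}\,\|\theta\|_{M(\hat t)}
\]
does not hold with the seminorm $|u|_{\vtx,p+1}$ as defined in~\eqref{eq:spatialseminorm}. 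Expanding $\dth[(M-I)\eta] = (M_0-I)\dth\eta - M_1\eta - \hat t\,M_1\dth\eta$, you need to control $\|\dth\eta\|_\vtx = \|\dth\uh - P_h\dth\uh\|_\vtx$, and the projection estimate for this requires $|\dth\uh(\hat t)|_{H^{p+1}(\ovh)^L}$, i.e.\ spatial regularity of the \emph{time derivative} of $\uh$. The seminorm $|u|_{\vtx,p+1}$ contains no time derivative, so the inequality as written is unjustified, and your Gr\"onwall step inherits this defect.

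The repair is to push your own orthogonality observation one step further. You correctly note $(\dth\eta,\theta)_\vtx=0$; the same mechanism kills the remaining piece. By assumption~\eqref{eq:G_L_const} (and since $\gradx\vphi$, $\gradx\delta$ are elementwise constant), $M(\hat t)$ and $M_1$ map $\Vhv$ into itself and are selfadjoint, so
\[
  (M\dth\eta,\theta)_\vtx = (\dth\eta, M\theta)_\vtx = 0,
  \qquad
  (M_1\eta,\theta)_\vtx = (\eta, M_1\theta)_\vtx = 0,
\]
whence $(\dth[M\eta],\theta)_\vtx = (M\dth\eta - M_1\eta,\theta)_\vtx = 0$ identically. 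With this, $-b_\tau(\eta,\theta) = \int_0^\tau a(\eta,\theta)\,d\hat t$, Lemma~\ref{lem:a_proj_bd} and Young's inequality close the estimate directly---no Gr\"onwall is needed. The paper arrives at the same key cancellation from the other side: it integrates $\int_0^\tau(\dth[M\eta],\theta)_\vtx$ by parts in $\hat t$, picks up endpoint terms $(M(\tau)\eta(\tau),\theta(\tau))_\vtx$ and $(M_0\eta(0),\theta(0))_\vtx$ (bounded by Cauchy--Schwarz in the $M$-inner product), and then uses the identical fact $(M\eta,\dth\theta)_\vtx=0$ to drop the remaining integral. Either route works, but both hinge on the commutativity~\eqref{eq:MPcommute} coming from~\eqref{eq:G_L_const}; your proposal stops just short of invoking it.
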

\begin{proof}
  Integrating~\eqref{eq:9v} of
  Definition~\ref{def:semidiscrete}, we see that the semidiscrete
  solution $\uh_h$ satisfies 
  $b_\tau(\uh_h,  w_h) = 0$ for all $w_h \in C^1(0, 1, \Vhv).$
  We have also shown that the exact solution $\uh$ satisfies a similar
  identity, namely~\eqref{eq:exactu-bform}. Subtracting these
  identities, we have
  \begin{equation}
    \label{eq:13}
    b_\tau(\uh - \uh_h, w_h)=0 \qquad
    \text{for all }w_h \in C^1(0, 1, \Vhv).    
  \end{equation}
  Let $e_h(x, t)$ in $C^1(0, 1, \Vhv)$ denote the function
  whose time slices are defined by
  $e_h (\tau) = \uh_h (\tau) - P_h \uh (\tau)$ for each $0 \le \tau \le 1.$
  Equation~\eqref{eq:13} implies that 
  $b_\tau(e_h, e_h) = b_\tau( \uh - P_h \uh, e_h)= b_\tau( e, e_h), $
  where we have set $e = \uh - P_h \uh$.  Therefore, together with
  Lemma~\ref{lem:bdiag}, we obtain 
  \begin{align*}
    \nonumber
    \frac 1 2
    &
      \left(
    \| e_h (\tau)\|_{M(\tau)}^2 - \| e_h (0) \|_{M(0)}^2
    + \int_0^\tau \big|e_h({\hat{t}})\big|^2_d\; d\hat t
      \right)
      =  b_\tau( e_h, e_h) = b_\tau( e, e_h)
    \\ \nonumber
    & = \int_0^\tau
      \big[ ( \dth[M(\hat t)e], e_h)_\vtx -
      a(e({\hat{t}}),
      e_h({\hat{t}}))\big]\, d\hat t
    \\  
    & =
      (M(\tau) e (\tau), e_h (\tau))_\vtx
      - (M_0 e (0),  e_h (0))_\vtx
      -\int_0^\tau \Big[
      (M(\hat t) e,   \dth e_h)_\vtx
      + a(e({\hat{t}}), e_h({\hat{t}}))
      \Big]\, d\hat t.
  \end{align*}
  Since $ \dth e_h$ is of degree at most
  $p$ on each element and 
  $
    ( M(\hat t) e,   \dth e_h)_\vtx
    = (\Gc e, \dth e_h)_\vtx - \sum_{j=1}^N((\d_j \vphi) \,\Lc j e, \dth
    e_h)_\vtx
    = 0
  $
  by~\eqref{eq:G_L_const} and the orthogonality property
  of the projection error.
  Applying Lemma~\ref{lem:a_proj_bd} to
  the last term,   
  \begin{align*}
    \| e_h &(\tau)\|_{M(\tau)}^2  - \| e_h (0) \|_{M(0)}^2
     + \int_0^\tau \big|e_h({\hat{t}})\big|^2_d\; d\hat t
      \;\lesssim \;
    \\
    & (M(\tau) e (\tau), e_h (\tau))_\vtx -
      (M(0) e (0), e_h (0))_\vtx
      + \hv^{p+1} \int_0^\tau |\uh({\hat{t}})|_{H^{p+1}(\ovh)^L} 
      |e_h (\hat t)|_d\;
    d\hat t.
  \end{align*}
  By Cauchy-Schwarz inequality in the inner product
  (see Lemma~\ref{lem:Mspd}) generated by $M(\tau)$, 
  \[
    (M(\tau) e (\tau), e_h (\tau))_\vtx
    \;\lesssim\; 
    \hv^{p+1} |\uh(\tau)|_{H^{p+1}(\ovh)^L}\| e_h (\tau) \|_{M(\tau)}, 
  \]
  which holds also when $\tau=0$.
  Further applications of Cauchy-Schwarz and Young's
  inequalities yield 
  \begin{align*}
    \| e_h &(\tau) \|_{M(\tau)}^2
     + \int_0^\tau \big|e_h({\hat{t}})\big|_d^2 \; d\hat t
    \\
    & \lesssim
    \| e_h (0) \|_{M(0)}^2 
    \\
    & +
    {\hv}^{2(p+1)}\!
      \left( |\uh(0)|_{H^{p+1}(\ovh)^L}^2
      + 
      |\uh(\tau)|_{H^{p+1}(\ovh)^L}^2
      + \int_0^\tau
      \big|\uh({\hat{t}})\big|_{H^{p+1}(\ovh)^L}^2\, d\hat t\right)
    \\
    &\lesssim
    \| e_h (0) \|_{M(0)}^2 + {\hv}^{2(p+1)}| u |_{\vtx, p+1}^2.
  \end{align*}
  Finally, using the well known error bounds for the $L^2$ projection
  and the triangle inequality, we obtain the result of the lemma.
\end{proof}

\subsection{Global error bound}

Recall the advancing front $C_i$ defined by~\eqref{eq:Ci}
and the layer $L_i$  defined by~\eqref{eq:Li}. We will use the
following ``$\TG$'' procedure several times in the sequel.

\begin{definition}[Tent propagators to global propagators: $\TG$]
  \label{def:T2G}
  Suppose we are given a collection of operators $\RR$, one for each
  tent. The element of $\RR$ corresponding to a tent $\Tv$ is an
  operator $R^{\Tv}: \Lv \to \Lv$, which we refer to as the given {\em
    tent propagator} on $\Tv$,
  or more precisely, on its preimage $\Tvh$. We think of $R^{\Tv}$
  as transforming functions given at the bottom of
  $\Tvh$ to functions at the  top of $\Tvh$ 
  by some specific discrete process or by the exact solution operator.
  To produce global propagation operators from the collection $\RR$,
  we start by mapping functions on $C_{i-1}$ to functions on $C_i$, or
  equivalently
  per the advancing front
  definition~\eqref{eq:Ci},
  by mapping functions of
  $(x, \vphi_{i-1}(x))$ to functions of $(x, \vphi_{i}(x))$.
  The {\em layer propagator} of the layer $L_i$ generated by $\RR$,
  denoted by $G^{i, i-1}: L^2(C_{i-1})^L \to L^2(C_i)^L$,
  is defined by first considering 
  points on the front $C_i$ which have not advanced in time, where 
  $G^{i, i-1} w$ simply coincides with $w$, and then considering
  the remaining points
  $(x, \vphi_i(x))$ on $C_i$ which
  are separated from $(x, \vphi_{i-1}(x))$
  on $C_{i-1}$ by a tent, say $\Tv$, where we 
  use the tent propagator of
  $\Tv$ (see Figure~\ref{fig:t2g}).
  The next formula states this precisely. 
  For any $w \in L^2(C_{i-1})^L$, 
  \[
    (G^{i, i-1} w)(x, \vphi_i(x))
    =
    \begin{cases}
      w(x, \vphi_{i-1}(x))
      &  \text{ at } x \in \om \text{ where } \vphi_i(x)  =
      \vphi_{i-1}(x),
      \\
      (R^{\Tv} \hat w_\vtx) (x)
      & \text{ if } x \in \ov \text{ for some } \vtx \in V_i,
    \end{cases}
  \]
  where $\hat w_\vtx (x) = w(x, \vphi_{i-1}(x))|_{\ov}$.
  Finally, for a pair $i, j$ with $i>j \ge 0$, the {\em global
      propagator} generated by $\RR$ is the operator
    $G^{i, j}: L^2(C_j)^L \to L^2(C_i)^L$, defined by
    \[
      G^{i, j} = G^{i, i-1} \circ G^{i-1, i-2} \circ \cdots\circ
      G^{j+1, j}.
    \]    
  Let $\TG$ denote this process of
  producing global propagators from a collection of
  tent propagators, i.e., we define 
  $\TG(i, j, \RR)$ to be the $G^{i, j}$ above.
\end{definition}

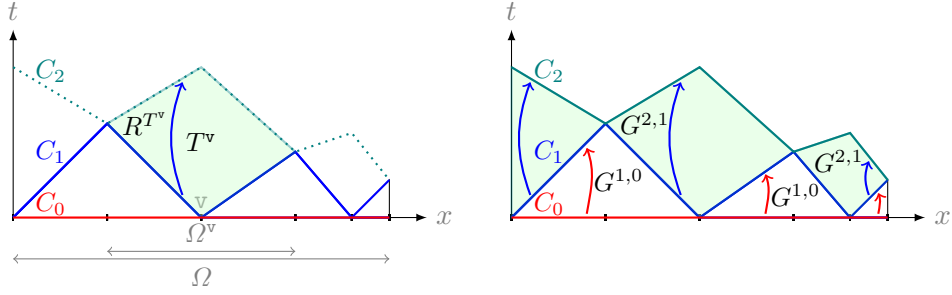
\begin{figure}
  \centering
  \begin{tikzpicture}[scale=2.5]
    \begin{scope}      
    \coordinate (a) at (0,0);
    \coordinate (b) at (0.5,0);
    \coordinate (c) at (1,0);
    \coordinate (d) at (1.5,0);
    \coordinate (e) at (1.8,0);
    \coordinate (f) at (2,0);
    \def\tao{0}
    \def\tbo{0}
    \def\tco{0}
    \def\tdo{0}
    \def\teo{0}
    \def\tfo{0}

    \draw[<->, gray] ($(a)-(0, 0.22)$)--($(f)-(0, 0.22)$)
    node[midway, below] {$\om$};

    \draw[<->, gray] ($(b)-(0, 0.18)$)--($(d)-(0, 0.18)$)
    node[midway, above] {$\ov$};

    \node[above, gray] at (c) {$\vtx$};
    
    \draw[mark=|,mark size=0.5pt,mark options={line width=.6pt},-latex]
    plot (a) -- plot (b) -- plot (c) -- plot (d) -- plot (e) -- 
    plot (f) -- (2.2,0.0) node[right, gray] {\large $x$};
    
    \draw[-latex] (a) -- (0.0,1.0) node[above, gray] {\large $t$};

    \def\tbo{0.5}
    \draw[blue, thick] let \p{a}=(a), \p{b}=(b), \p{c}=(c),
    \p{d}=(d), \p{e}=(e), \p{f}=(f) in
    plot coordinates {(\x{a},\tao)} -- plot coordinates {(\x{b},\tbo)} -- 
    plot coordinates {(\x{c},\tco)} -- plot coordinates {(\x{d},\tdo)} -- 
    plot coordinates {(\x{e},\teo)} -- plot coordinates {(\x{f},\tfo)};
    
    \def\tdn{0.35}
    \draw
    let \p{c}=(c), \p{d}=(d), \p{e}=(e) in
    plot (c) -- plot coordinates {(\x{d},\tdo)} -- plot (e) -- 
    plot coordinates {(\x{d},\tdn)} -- cycle;

    \def\tdo{0.35}
    \draw
    [blue, thick] let \p{a}=(a), \p{b}=(b), \p{c}=(c),
    \p{d}=(d), \p{e}=(e), \p{f}=(f) in
    plot coordinates {(\x{a},\tao)} -- plot coordinates {(\x{b},\tbo)} -- 
    plot coordinates {(\x{c},\tco)} -- plot coordinates {(\x{d},\tdo)} -- 
    plot coordinates {(\x{e},\teo)} -- plot coordinates {(\x{f},\tfo)};

    \def\tfn{0.2}
    \draw let \p{e}=(e), \p{f}=(f) in
    plot (e) -- plot coordinates {(\x{f},\tfn)} -- plot (f) -- cycle;

    \def\tfo{0.2}

    \draw
    [blue, thick] let \p{a}=(a), \p{b}=(b), \p{c}=(c),
    \p{d}=(d), \p{e}=(e), \p{f}=(f) in
    plot coordinates {(\x{a},\tao)} -- plot coordinates {(\x{b},\tbo)} -- 
    plot coordinates {(\x{c},\tco)} -- plot coordinates {(\x{d},\tdo)} -- 
    plot coordinates {(\x{e},\teo)} -- plot coordinates {(\x{f},\tfo)};
    
    \def\tcn{0.8}

    \draw
    [green!50!blue, thick, fill=green!20, opacity=0.4]
    let \p{b}=(b), \p{c}=(c), \p{d}=(d) in
    plot coordinates {(\x{b},\tbo)} -- plot coordinates {(\x{c},\tcn)} -- 
    plot coordinates {(\x{d},\tdo)} -- plot coordinates {(\x{c},\tco)}
    -- cycle;

    \node at ($(c)+(0,0.4)$)  {$T^\vtx$};

    \draw[blue, thick, ->] ($(c)+(-0.1,0.12)$)
    [out=110, in=-110] to  ($(c)+(-0.1, 0.72)$);
    
    \node at ($(c)+(-0.3,0.5)$)  {{$R^{T^{\vtx}}$}};

    \def\tao{0.8}
    \def\tbo{0.5}
    \def\tco{0.8}
    \def\ten{0.45}
    \def\teo{0.45}
    \draw
    [green!50!blue, dotted, thick] let \p{a}=(a), \p{b}=(b), \p{c}=(c),
    \p{d}=(d), \p{e}=(e), \p{f}=(f) in
    plot coordinates {(\x{a},\tao)} -- plot coordinates {(\x{b},\tbo)} -- 
    plot coordinates {(\x{c},\tco)} -- plot coordinates {(\x{d},\tdo)} -- 
    plot coordinates {(\x{e},\teo)} -- plot coordinates
    {(\x{f},\tfo)};

    \draw[red, thick] (a)--(f);

    \node[red] at ($(b)+(-0.3, 0.07)$)  {$C_0$};
    \node[blue] at ($(b)+(-0.3,0.35)$)  {$C_1$};
    \node[green!50!blue] at ($(b)+(-0.3,0.78)$)  {$C_2$};
  \end{scope}
  \begin{scope}[shift={(2.65,0)}]

    \coordinate (a) at (0,0);
    \coordinate (b) at (0.5,0);
    \coordinate (c) at (1,0);
    \coordinate (d) at (1.5,0);
    \coordinate (e) at (1.8,0);
    \coordinate (f) at (2,0);
    \def\tao{0}
    \def\tbo{0}
    \def\tco{0}
    \def\tdo{0}
    \def\teo{0}
    \def\tfo{0}

    \draw[mark=|,mark size=0.5pt,mark options={line width=.6pt},-latex]
    plot (a) -- plot (b) -- plot (c) -- plot (d) -- plot (e) -- 
    plot (f) -- (2.2,0.0) node[right, gray] {\large $x$};
    
    \draw[-latex] (a) -- (0.0,1.0) node[above, gray] {\large $t$};
    
    \def\tbo{0.5}
    \draw[blue, thick] let \p{a}=(a), \p{b}=(b), \p{c}=(c),
    \p{d}=(d), \p{e}=(e), \p{f}=(f) in
    plot coordinates {(\x{a},\tao)} -- plot coordinates {(\x{b},\tbo)} -- 
    plot coordinates {(\x{c},\tco)} -- plot coordinates {(\x{d},\tdo)} -- 
    plot coordinates {(\x{e},\teo)} -- plot coordinates {(\x{f},\tfo)};
    
    \def\tdn{0.35}
    \draw
    let \p{c}=(c), \p{d}=(d), \p{e}=(e) in
    plot (c) -- plot coordinates {(\x{d},\tdo)} -- plot (e) -- 
    plot coordinates {(\x{d},\tdn)} -- cycle;

    \def\tdo{0.35}
    \draw
    [blue, thick] let \p{a}=(a), \p{b}=(b), \p{c}=(c),
    \p{d}=(d), \p{e}=(e), \p{f}=(f) in
    plot coordinates {(\x{a},\tao)} -- plot coordinates {(\x{b},\tbo)} -- 
    plot coordinates {(\x{c},\tco)} -- plot coordinates {(\x{d},\tdo)} -- 
    plot coordinates {(\x{e},\teo)} -- plot coordinates {(\x{f},\tfo)};

    \def\tfn{0.2}
    \draw let \p{e}=(e), \p{f}=(f) in
    plot (e) -- plot coordinates {(\x{f},\tfn)} -- plot (f) -- cycle;

    \def\tfo{0.2}

    \draw
    [blue, thick] let \p{a}=(a), \p{b}=(b), \p{c}=(c),
    \p{d}=(d), \p{e}=(e), \p{f}=(f) in
    plot coordinates {(\x{a},\tao)} -- plot coordinates {(\x{b},\tbo)} -- 
    plot coordinates {(\x{c},\tco)} -- plot coordinates {(\x{d},\tdo)} -- 
    plot coordinates {(\x{e},\teo)} -- plot coordinates {(\x{f},\tfo)};

    \def\tcn{0.8}
    
    \draw
    [green!50!blue, thick, fill=green!20, opacity=0.4]
    let \p{b}=(b), \p{c}=(c), \p{d}=(d) in
    plot coordinates {(\x{b},\tbo)} -- plot coordinates {(\x{c},\tcn)} -- 
    plot coordinates {(\x{d},\tdo)} -- plot coordinates {(\x{c},\tco)}
    -- cycle;

    \def\ten{0.45}
    \draw
    [green!50!blue, thick, fill=green!20, opacity=0.4]
    let \p{d}=(d), \p{e}=(e), \p{f}=(f) in
    plot coordinates {(\x{d},\tdo)} -- plot coordinates {(\x{e},\ten)} -- 
    plot coordinates {(\x{f},\tfo)} -- plot coordinates {(\x{e},\teo)}
    -- cycle;

    \def\tao{0.8}
    \def\tbo{0.5}
    \def\tco{0.8}
    \def\ten{0.45}
    \def\teo{0.45}

    \def\tan{0.8}
    
    \draw [green!50!blue, thick, fill=green!20, opacity=0.4]
    let \p{a}=(a), \p{b}=(b) in
    plot (a) -- plot coordinates {(\x{a},\tan)} -- plot coordinates
    {(\x{b},\tbo)} -- cycle;

    \draw
    [green!50!blue,  thick] let \p{a}=(a), \p{b}=(b), \p{c}=(c),
    \p{d}=(d), \p{e}=(e), \p{f}=(f) in
    plot coordinates {(\x{a},\tao)} -- plot coordinates {(\x{b},\tbo)} -- 
    plot coordinates {(\x{c},\tco)} -- plot coordinates {(\x{d},\tdo)} -- 
    plot coordinates {(\x{e},\teo)} -- plot coordinates
    {(\x{f},\tfo)};

    \draw[red, thick] (a)--(f);
    \node[red] at ($(b)+(-0.3, 0.07)$)  {$C_0$};
    \node[blue] at ($(b)+(-0.3,0.35)$)  {$C_1$};
    \node[green!50!blue] at ($(b)+(-0.3,0.78)$)  {$C_2$};      
    \draw[blue, thick, ->] ($(c)+(-0.1,0.12)$)
    [out=110, in=-110] to  ($(c)+(-0.1, 0.72)$);    

    \draw[blue, thick, ->] ($(a)+(0.1,0.12)$)
    [out=110, in=-110] to  ($(a)+(0.1, 0.72)$);
    \draw[blue, thick, ->] ($(f)+(-0.1,0.12)$)
    [out=110, in=-110] to  ($(f)+(-0.1, 0.3)$);

    \node at ($(f)+(-0.26,0.3)$)  {{$G^{2, 1}$}};
    \node at ($(c)+(-0.29,0.5)$)  {{$G^{2, 1}$}};

    \draw[red, thick, ->] ($(b)+(-0.1, 0.02)$)
    [out=80, in=-70] to  ($(b)+(-0.1, 0.37)$);
    \draw[red, thick, ->] ($(d)+(-0.15, 0.02)$)
    [out=80, in=-70] to  ($(d)+(-0.15, 0.23)$);
    \draw[red, thick, ->] ($(f)+(-0.05, 0.02)$)
    [out=80, in=-70] to  ($(f)+(-0.05, 0.13)$);

    \node at ($(b)+(0.07,0.2)$)  {{$G^{1, 0}$}};
    \node at ($(d)+(0.01,0.12)$)  {{$G^{1, 0}$}};
    
  \end{scope}
  \end{tikzpicture}
  \caption{Schematic of a tent propagator (left) and
    two layer propagators (right).}
  \label{fig:t2g}
\end{figure}

For the semidiscretization, the tent propagator on $\Tv$ is the
operator 
$\Rsem_h(1) \circ P_h : \Lv \to \Vhv \subset \Lv$, set using the operator
$\Rsem_h(\tau)$ of Definition~\ref{def:semidiscrete}, 
evaluated at pseudotime $\tau=1$ (corresponding to the tent top).
Collecting these semidiscrete tent propagators into $\RR_h$ we use
Definition~\ref{def:T2G} to set the corresponding semidiscrete global
propagators $R_h^{i, j} = \TG(i, j, \RR_h).$ The exact propagator
$R^{i, j}$ is defined similarly, replacing $\Rsem_h$ by the exact
propagator of the hyperbolic system on tents (without projecting tent
bottom data), so that if $u(x,t)$ is the global exact solution of the
hyperbolic system on $\om \times [0, T]$, then
\begin{equation}
  \label{eq:16}
  R^{i, j}(u|_{C_j}) = u|_{C_i}.  
\end{equation}
The semidiscrete error propagation operators across layers can now be
defined by
\[
E_h^{i, j} = R^{i, j} - R_h^{i, j}.
\]
Letting $C_m$ denote the final front and $C_0$ the first, we are
interested in bounding the error at the final front, which is simply
$E_h^{m, 0} u^0$.  Setting $R^{0,0}$ and $R_h^{m, m}$ to the trivial
identity operators, we have the following lemma.

\begin{lemma}
  \label{lem:errorpropagation}
  \quad $
    \displaystyle{
    E_h^{m, 0} =
    \sum_{j=1}^m R_h^{m, j} E_h^{j, j-1} R^{j-1, 0}.
  }$
\end{lemma}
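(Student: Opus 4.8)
The plan is to establish the identity by telescoping a suitable product expansion of the global operators, using the compositional definition $G^{i,j} = G^{i,i-1}\circ\cdots\circ G^{j+1,j}$ supplied by Definition~\ref{def:T2G}. The key algebraic fact is the classical telescoping-sum identity for differences of products: if $P_j$ and $Q_j$ are two sequences of operators (here indexed by layers $j=1,\dots,m$) composed in order, then
\[
  Q_m\cdots Q_1 - P_m\cdots P_1
  = \sum_{j=1}^m (Q_m \cdots Q_{j+1})\,(Q_j - P_j)\,(P_{j-1}\cdots P_1),
\]
with the convention that empty products are the identity. I would apply this with $Q_j = R_h^{j,j-1}$ (the semidiscrete layer propagator) and $P_j = R^{j,j-1}$ (the exact layer propagator), noting that by the compositional definition $R_h^{m,j} = R_h^{m,m-1}\circ\cdots\circ R_h^{j+1,j} = Q_m\cdots Q_{j+1}$ and $R^{j-1,0} = P_{j-1}\cdots P_1$, while $R_h^{m,0} = Q_m\cdots Q_1$ and $R^{m,0} = P_m\cdots P_1$. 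The factor $Q_j - P_j = R_h^{j,j-1} - R^{j,j-1} = E_h^{j,j-1}$ by the definition of the error propagation operator, and the result follows immediately. The edge conventions $R^{0,0}=\mathrm{id}$ and $R_h^{m,m}=\mathrm{id}$ stated just before the lemma are exactly what is needed to make the $j=1$ term ($P_{j-1}\cdots P_1 = P_0\cdots$, empty, $=\mathrm{id}=R^{0,0}$) and the $j=m$ term ($Q_m\cdots Q_{m+1}$, empty, $=\mathrm{id}=R_h^{m,m}$) come out right.

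I would organize the write-up in two short steps. First, verify the telescoping identity itself by a one-line induction on $m$ (or simply observe that it is a standard "abel summation for products" and each intermediate term cancels with its neighbor when expanded): writing $D_j = Q_j - P_j$, one checks $Q_m\cdots Q_1 - P_m\cdots P_1 = Q_m(Q_{m-1}\cdots Q_1 - P_{m-1}\cdots P_1) + (Q_m - P_m)P_{m-1}\cdots P_1$ and iterates. Second, match each symbolic factor to the objects in the statement: $Q_m\cdots Q_{j+1} = R_h^{m,j}$, $D_j = E_h^{j,j-1}$, and $P_{j-1}\cdots P_1 = R^{j-1,0}$, being careful that the composition order in Definition~\ref{def:T2G} (leftmost operator is the latest layer) agrees with the order in which the telescoping identity places the factors.

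The one point requiring genuine care — the only real obstacle, and a mild one — is the bookkeeping of composition orders and empty-product conventions at the two ends of the sum. Because the layer propagators act left-to-right with the most recently pitched layer on the left, one must confirm that $R_h^{m,j}$ as defined equals precisely $R_h^{m,m-1}\circ\cdots\circ R_h^{j+1,j}$ with no off-by-one error in which layers are included, and that $R^{j-1,0} = R^{j-1,j-2}\circ\cdots\circ R^{1,0}$; the borderline cases $j=1$ and $j=m$ then need the stated identifications $R^{0,0}=\mathrm{id}$, $R_h^{m,m}=\mathrm{id}$ to hold literally. Once these conventions are pinned down, the proof is a purely formal manipulation with no analytic content — no use of causality, stability, or the DG structure is needed here; those enter only when one later estimates each term $R_h^{m,j}E_h^{j,j-1}R^{j-1,0}u^0$ in the relevant norms.
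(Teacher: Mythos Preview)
Your approach is correct and is essentially the same as the paper's: the paper writes out the one-step recursion
\[
  E_h^{m,0} = E_h^{m,m-1} R^{m-1,0} + R_h^{m,m-1} E_h^{m-1,0}
\]
and iterates, which is exactly the inductive form of your telescoping identity.

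One bookkeeping slip to fix: with the paper's sign convention $E_h^{i,j} = R^{i,j} - R_h^{i,j}$, you have $E_h^{j,j-1} = P_j - Q_j$, not $Q_j - P_j$ as you wrote. Correspondingly, the left side of your telescoping identity is $Q_m\cdots Q_1 - P_m\cdots P_1 = R_h^{m,0} - R^{m,0} = -E_h^{m,0}$. The two sign flips cancel, so the final identity comes out right, but your write-up should use the correct signs throughout.
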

\begin{proof}
  Adding and subtracting $R_h^{m, m-1} \circ R^{m-1, 0}$, 
  \begin{align*}
    E_h^{m, 0}
    &  = R^{m, 0} - R_h^{m, 0}
      =
      R^{m, m-1} \circ R^{m-1, 0} 
      - R_h^{m, m-1} \circ R_h^{m-1, 0}
    \\
    & 
      = (R^{m, m-1} -  R_h^{m, m-1}) \circ R^{m-1, 0}
      + R_h^{m, m-1} \circ (R^{m-1, 0} -R_h^{m-1, 0}),
  \end{align*}
  i.e., 
  \[
    E_h^{m, 0} = E_h^{m, m-1} R^{m-1, 0} + R_h^{m, m-1} E_h^{m-1, 0}.
  \]
  The last term admits a recursive application of the same identity.
  Doing so $m-1$ times, the lemma is proved.
\end{proof}

Our global error analysis proceeds in a norm on advancing fronts
defined by
\[
  \| w \|_{C_i}^2
  = \int_\om
  \big[
  g( w(x, \vphi_i(x))) - f(w(x, \varphi_i(x)))\, (\gradx \vphi_i)(x)
  \big] \cdot w(x, \vphi_i(x)).
\]
Let $\| w \|_{C_i, \vtx}$ be defined by the same equality after
replacing the integral over $\om$ by integral over $\ov$.  Since the
first and last fronts, $C_0$ and $C_m$, respectively, are flat
\begin{equation}
  \label{eq:14}
  \| w \|_{C_0}^2 = (\Gc w(0), w(0))_\om
  \quad \text{ and }\quad 
  \| w \|_{C_m}^2 = (\Gc w(T), w(T))_\om,
\end{equation}
where, as before,  $T$ is the final time.

\begin{lemma}
  \label{lem:Rh_bound}
  For all $w \in L^2(C_j)^L$ and $i > j$, we have
  \[
    \| R_h^{i, j} w \|_{C_i} \le \| w \|_{C_j}.
  \]
\end{lemma}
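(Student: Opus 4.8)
The plan is to reduce to a single layer and then use that an advancing front decomposes, up to sets of measure zero, into the vertex patches at which tents were pitched. By Definition~\ref{def:T2G} we have $R_h^{i,j}=R_h^{i,i-1}\circ R_h^{i-1,i-2}\circ\cdots\circ R_h^{j+1,j}$, so it suffices to prove the one-layer bound $\|R_h^{k,k-1}w\|_{C_k}\le\|w\|_{C_{k-1}}$ for every $k$; the general estimate then follows by composing these inequalities. Fix $k$ and write $G=R_h^{k,k-1}$. The patches $\ov$ of the pitch vertices $\vtx\in V_k$ have pairwise intersections of measure zero (they share no mesh element), their union is the region on which $\varphi_{k-1}$ is raised to $\varphi_k$, and on the complementary region $\om_{\mathrm{rest}}=\om\setminus\bigcup_{\vtx\in V_k}\ov$ one has $\varphi_k=\varphi_{k-1}$ and $Gw=w$ by the definition of $G$. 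Hence
\[
  \|Gw\|_{C_k}^2=\sum_{\vtx\in V_k}\|Gw\|_{C_k,\vtx}^2+I_{\mathrm{rest}},
  \qquad
  \|w\|_{C_{k-1}}^2=\sum_{\vtx\in V_k}\|w\|_{C_{k-1},\vtx}^2+I_{\mathrm{rest}},
\]
where $I_{\mathrm{rest}}=\int_{\om_{\mathrm{rest}}}[g(w(x,\varphi_{k-1}(x)))-f(w(x,\varphi_{k-1}(x)))(\gradx\varphi_{k-1})(x)]\cdot w(x,\varphi_{k-1}(x))$ is identical in the two identities. So everything reduces to showing $\|Gw\|_{C_k,\vtx}\le\|w\|_{C_{k-1},\vtx}$ for each $\vtx\in V_k$.

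Next I would recast the patch norms as $M$-norms, exactly as remarked in the text just before Proposition~\ref{prop:exact_stability}. For $\vtx\in V_k$ the tent $\Tv$ in $L_k$ has bottom $\varphi_{k-1}|_{\ov}$ and top $\varphi_k|_{\ov}$; writing $\hat w_\vtx(x)=w(x,\varphi_{k-1}(x))|_{\ov}$ and using \eqref{eq:M0}, \eqref{eq:M1} together with $M(0)=M_0$ and $M(1)=M_0-M_1$, one checks that $\|w\|_{C_{k-1},\vtx}=\|\hat w_\vtx\|_{M(0)}$ and, since $(Gw)(x,\varphi_k(x))=(\Rsem_h(1)P_h\hat w_\vtx)(x)$ on $\ov$ by Definition~\ref{def:T2G}, also $\|Gw\|_{C_k,\vtx}=\|\Rsem_h(1)P_h\hat w_\vtx\|_{M(1)}$. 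The semidiscrete stability estimate, Lemma~\ref{lem:stability_semidiscrete}, gives $\|\Rsem_h(1)P_h\hat w_\vtx\|_{M(1)}\le\|P_h\hat w_\vtx\|_{M(0)}$, so it only remains to prove that $P_h$ is a contraction in the $\|\cdot\|_{M(0)}$-norm, i.e. $\|P_h z\|_{M(0)}\le\|z\|_{M(0)}$ for all $z\in\Lv$.

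This last point is the crux, and it is where hypothesis \eqref{eq:G_L_const} enters (the statement is false for a general weighted $L^2$-norm). Because $\Gc$ and $\Lc j$ are element-wise constant and $\varphi_{k-1}$ is element-wise linear, $M_0$ acts on each element $K\in\ovh$ as multiplication by the constant matrix $M_{0,K}=\Gc|_K-\sum_j(\partial_j\varphi_{k-1}|_K)\Lc j|_K$, which is symmetric positive definite by Lemma~\ref{lem:Mspd}; and $P_h$ restricts to the $L^2(K)^L$-orthogonal projection onto $P_p(K)^L$. Writing $z=P_h z+r$ on $K$ with $r$ orthogonal to $P_p(K)^L$, we have $M_{0,K}P_h z\in P_p(K)^L$, hence $(M_{0,K}P_h z,r)_K=0$, and therefore $(M_{0,K}z,z)_K=(M_{0,K}P_h z,P_h z)_K+(M_{0,K}r,r)_K\ge(M_{0,K}P_h z,P_h z)_K$. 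Summing over $K\in\ovh$ gives $\|P_h z\|_{M(0)}^2\le\|z\|_{M(0)}^2$. Chaining the three inequalities yields $\|Gw\|_{C_k,\vtx}\le\|w\|_{C_{k-1},\vtx}$; summing over $\vtx\in V_k$ and adding back the common $I_{\mathrm{rest}}$ proves the one-layer bound, and composition over layers finishes the proof.

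I expect the main obstacle to be less the one-layer argument itself than, first, stating the front decomposition and the cancellation of $I_{\mathrm{rest}}$ cleanly, and, above all, noticing the $M_0$-contractivity of the $L^2$ projection---a property special to the piecewise-constant-coefficient setting that \eqref{eq:G_L_const} is tailored to provide. Everything else is bookkeeping with identities already recorded for $\|\cdot\|_{M(\tau)}$, $\Rsem_h$, and $\TG$.
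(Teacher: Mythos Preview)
Your proof is correct and follows essentially the same route as the paper: reduce to one layer, localize to each pitched vertex patch, apply Lemma~\ref{lem:stability_semidiscrete}, and use~\eqref{eq:G_L_const} to show $\|P_h z\|_{M(0)}\le\|z\|_{M(0)}$. The only cosmetic difference is that the paper proves the $P_h$-contractivity via $(M_0 P_h\hat w,P_h\hat w)_\vtx=(M_0 P_h\hat w,\hat w)_\vtx\le\|P_h\hat w\|_{M_0}\|\hat w\|_{M_0}$ rather than your orthogonal-decomposition argument, and it leaves the $I_{\mathrm{rest}}$ bookkeeping implicit.
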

\begin{proof}
  First consider the case $j = i-1$ and a $\vtx \in V_i$.
  Applying Lemma~\ref{lem:stability_semidiscrete} on 
  tent $\Tv$  in $L_i$,  we obtain that $\hat r_h = (R_h^{i, i-1} w)
  \circ \vPhi$ and $\hat w = w \circ \vPhi$ satisfies 
  \begin{equation}
    \label{eq:10}
    \| \hat r_h \|_{M(1)}^2 \le \| P_h \hat w\|_{M(0)}^2.
  \end{equation}
  By~\eqref{eq:G_L_const},
  \begin{align*}
    \| P_h \hat w \|_{M(0)}^2
    & =
      (\Gc P_h \hat w, P_h \hat w)_\vtx -
      \sum_{j=1}^N (\d_j \vpbot \Lc j P_h \hat w, P_h \hat w)_\vtx
    \\
    & = (M(0) P_h \hat w, \hat w)_\vtx
    \le \| P_h \hat w \|_{M(0)} \| \hat w \|_{M(0)},
  \end{align*}
  so~\eqref{eq:10} implies that $ \| \hat r_h \|_{M(1)}^2 \le
  \|\hat w\|_{M(0)}^2,$ which is   the same as
  $
    \| R_h^{i, i-1} w \|^2_{C_i, \vtx} \le \| w \|_{C_{i-1}, \vtx}^2.
  $
  Summing over $\vtx \in V_i$, we prove that
  \begin{equation}
    \label{eq:9}
    \| R_h^{i, i-1} w \|_{C_i} \le \| w \|_{C_{i-1}}.   
  \end{equation}
  Repeatedly
  applying this inequality on any further layers in between $i-1$ and $j$
  proves the lemma.
\end{proof}

In the subsequent statements of error estimates like in the next
theorem, we will tacitly assume that the exact solution
is smooth enough for
the seminorms on the right hand side to be finite.

\begin{theorem}[Error estimate for the semidiscretization]
  \label{thm:semidiscrete}
  Suppose $\om \times (0, T)$ is meshed by $m$ layers of tents
  satisfying the causality condition~\eqref{eq:causalitycondition}. 
  At the final time $T$,
  the difference between the exact solution $u(T)$ 
  and the semidiscrete MTP solution
  $u_h (T) \in V_h$ satisfies
  \[
    \| u (T)  - u_h (T)  \|_\om
    \;\lesssim\;
    \bigg(\sum_{j=1}^m h_j\bigg)^{1/2}
    \bigg(
    \sum_{j=1}^m \sum_{\vtx \in V_j} \hv^{2p+1} |u|_{\vtx, p+1}^2
    \bigg)^{1/2},
  \]
  where $h_j = \max_{\vtx \in V_j} \hv$.
\end{theorem}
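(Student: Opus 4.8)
The plan is to turn the global error at the final time into a sum of per-layer errors, bound each with the local error estimate (Lemma~\ref{lem:local_semidiscrete_error}), and reassemble the pieces using the front-stability bound (Lemma~\ref{lem:Rh_bound}) together with a Cauchy--Schwarz inequality weighted by the layer mesh sizes $h_j$.

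First I would observe that, because $C_0$ and $C_m$ are flat, \eqref{eq:14} and the uniform positive definiteness of $\Gc$ give $\| u(T) - u_h(T)\|_\om \lesssim \| E_h^{m,0} u^0\|_{C_m}$, where, by~\eqref{eq:16} and the definition of the semidiscrete global propagator, the function $E_h^{m,0} u^0$ viewed on $C_m$ is precisely $u(T) - u_h(T)$. Then I would apply Lemma~\ref{lem:errorpropagation}, the triangle inequality for $\|\cdot\|_{C_m}$, and Lemma~\ref{lem:Rh_bound} to peel off each factor $R_h^{m,j}$, using~\eqref{eq:16} once more to replace $R^{j-1,0}u^0$ by $u|_{C_{j-1}}$. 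This should yield
\[
  \| u(T) - u_h(T)\|_\om
  \;\lesssim\;
  \sum_{j=1}^m \big\| E_h^{j, j-1}\big( u|_{C_{j-1}}\big) \big\|_{C_j},
\]
the right-hand side being the sum of the errors committed while advancing the front across each layer.

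The core of the argument is then the bound on one layer term. On the part of $C_j$ that does not advance, both the exact and the semidiscrete layer propagators act as the identity (Definition~\ref{def:T2G}), so the error vanishes there; on the complement, the essentially disjoint union of the patches $\ov$, $\vtx\in V_j$, the front norm splits as $\| E_h^{j,j-1}(u|_{C_{j-1}})\|_{C_j}^2 = \sum_{\vtx\in V_j} \| E_h^{j,j-1}(u|_{C_{j-1}})\|_{C_j,\vtx}^2$. For one tent $\Tv$, pulling back by $\vPhi^\vtx$, the data fed in at the tent bottom is $\uh(0) = (u\circ\vPhi)(\cdot,0)$, the local semidiscrete tent solution at pseudotime $\tau$ is $\Rsem_h(\tau)(P_h\uh(0))$, and (since $\vphi_j$ agrees with $\vptop^\vtx$ on $\ov$) the seminorm $\|\cdot\|_{C_j,\vtx}$ on the tent top equals the $\|\cdot\|_{M(1)}$-norm of the pullback. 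Hence Lemma~\ref{lem:local_semidiscrete_error}, used with $\uh_h^0 = P_h\uh(0)$, bounds $\| E_h^{j,j-1}(u|_{C_{j-1}})\|_{C_j,\vtx}$ by $\|\uh(0)-P_h\uh(0)\|_{M(0)} + \hv^{p+1}|u|_{\vtx,p+1}$, and the projection term is absorbed into $\hv^{p+1}|u|_{\vtx,p+1}$ via the $M(0)$-bound of Lemma~\ref{lem:Mspd} and the standard $L^2$-projection estimate. Summing over $\vtx\in V_j$ gives $\| E_h^{j,j-1}(u|_{C_{j-1}})\|_{C_j}^2 \lesssim \sum_{\vtx\in V_j}\hv^{2(p+1)}|u|_{\vtx,p+1}^2$.

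To finish, I would apply Cauchy--Schwarz to the sum over $j$ with weights $h_j$, producing the factor $(\sum_j h_j)^{1/2}$ times $(\sum_j h_j^{-1}\| E_h^{j,j-1}(u|_{C_{j-1}})\|_{C_j}^2)^{1/2}$, and then use $\hv\le h_j$ for $\vtx\in V_j$ to replace $h_j^{-1}\hv^{2(p+1)}$ by $\hv^{2p+1}$, which turns the second factor into $(\sum_j\sum_{\vtx\in V_j}\hv^{2p+1}|u|_{\vtx,p+1}^2)^{1/2}$ and recovers the claimed estimate. I expect the main obstacle to be bookkeeping rather than new analysis: one must check that the non-advancing part of a layer contributes no error, that $\|\cdot\|_{C_j}$ decomposes over the element-disjoint patches of $V_j$, and---most importantly---that the bottom data of the local tent problem is exactly $P_h\uh(0)$, so that Lemma~\ref{lem:local_semidiscrete_error} applies verbatim. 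The one genuine trick is the weighting by $h_j$ in the last step, in place of a plain Cauchy--Schwarz with a $\sqrt{m}$ factor, which is what yields the sharp $\hv^{2p+1}$ inside the double sum instead of $\hv^{2(p+1)}$.
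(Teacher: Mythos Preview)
Your proposal is correct and follows essentially the same approach as the paper's proof: both decompose the global error via Lemma~\ref{lem:errorpropagation}, strip off the $R_h^{m,j}$ factors with Lemma~\ref{lem:Rh_bound}, localize each layer error to tents, apply Lemma~\ref{lem:local_semidiscrete_error} with $\uh_h^0 = P_h\uh(0)$, and then finish with a Cauchy--Schwarz step weighted by $h_j$ to convert $\hv^{2p+2}$ into $h_j\cdot\hv^{2p+1}$. The only cosmetic difference is the order in which you apply the $h_j$-weighting and Cauchy--Schwarz (you weight first, the paper pulls out $h_j^{1/2}$ first), which is immaterial.
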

\begin{proof}
  Let $u_j = u |_{C_j}$. Then, per~\eqref{eq:16},
  $R^{j-1, 0} u^0 = u_{j-1}$. Therefore, 
  \begin{align}
    \nonumber 
    \| u (T)  - u_h (T)  \|_\om
    & \lesssim \| u- u_h \|_{C_m} = \| E_h^{m, 0} u^0 \|_{C_m}
    && \text{by~\eqref{eq:14}}
    \\ \nonumber 
    & \le \sum_{j=1}^m \|  R_h^{m, j} E_h^{j, j-1} u_{j-1}
      \|_{C_m}
    && \text{by Lemma~\ref{lem:errorpropagation}}
    \\ \label{eq:15}
    & \le \sum_{j=1}^m \|  E_h^{j, j-1} u_{j-1}
      \|_{C_j}
    && \text{by Lemma~\ref{lem:Rh_bound}}.
  \end{align}
  Since the spatial projection of the support of $E_h^{j, j-1} u_j$
  can be subdivided into the union of non-overlapping
  vertex patches $\ov$ for all
  pitch vertices $\vtx \in V_j$,
  \[
    \|  E_h^{j, j-1} u_{j-1}\|_{C_j}^2
    = \sum_{\vtx \in V_j}  \|  E_h^{j, j-1} u_{j-1}\|_{C_j, \vtx}^2.
  \]
  On a tent $\Tv$ with $\vtx \in V_j$, note that
  $E_h^{j, j-1} u_{j-1}|_{\dtop\Tv} = (u_j - R_h^{j, j-1}
  u_{j-1})|_{\dtop\Tv} = u|_{\dtop\Tv} - R_h^{\Tv} (u|_{\dbot
    \Tv}\circ \Phi^{-1}).$ Putting $\uh = u|_\Tv \circ \vPhi$ and
  $\uh_h = R_h^{\Tv}\uh(0) = \Rsem_h(1) \circ P_h \uh(0)$,
  applying 
  Lemma~\ref{lem:local_semidiscrete_error} with
  $\tau=1$ and $\uh_h^0 = P_h \uh(0)$  yields
  \begin{align*}
    \|  E_h^{j, j-1} u_{j-1}\|_{C_j, \vtx}
    &=
      \| \uh(1) - \uh_h(1) \|_{M(1)}
      \\
    & \lesssim
      \| \uh(0) - P_h \uh(0)\|_{M(0)} + \hv^{p+1} | u|_{\vtx, p+1}
      \lesssim \hv^{p+1} | u|_{\vtx, p+1}.
  \end{align*}
  Using this in~\eqref{eq:15},
  \begin{align*}
    \| u (T)  - u_h (T)  \|_\om
    &
      \lesssim
      \sum_{j=1}^m
      \bigg(
      \sum_{\vtx \in V_j}
      \hv^{2p+2} | u|_{\vtx, p+1}^2
      \bigg)^{1/2}
      \lesssim
      \sum_{j=1}^m
      h_j^{1/2}
      \bigg(
      \sum_{\vtx \in V_j}
      \hv^{2p+1} | u|_{\vtx, p+1}^2
      \bigg)^{1/2},      
  \end{align*}
  so the proof is finished by applying the Cauchy-Schwarz inequality.
\end{proof}

\begin{remark}
  Note that $h_j$ may be interpreted as
  the ``layer height'' of $L_j$ due to the causality condition.
  Suppose 
  \begin{equation}
    \label{eq:32}
    \sum_{i=1}^m h_i \lesssim T.
  \end{equation}
  Theorem~\ref{thm:semidiscrete} then yields $O(h^{p+1/2})$-rate of
  convergence with $h = \max_\vtx \hv$.  Of course, \eqref{eq:32} can be
  violated by choosing very sparse layers (e.g., with one tent per
  layer), but this is not useful to get the best estimate from
  Theorem~\ref{thm:semidiscrete}, nor is it useful in practice:
  indeed, a large number of non-interacting
  tents (such as the tents of the same color in
  Figures~\ref{fig:tents:L1}--\ref{fig:tents:L4})  in each layer
  allows for
  better parallelism.  
\end{remark}

\begin{remark}
  \label{rem:weaker_stab}
  Suppose that instead of the operators $R_h^{i,i-1}$
  satisfying~\eqref{eq:9}, we are given operators
  $\Rt_h^{i, i-1}: L^2(C_{i-1})^L \to L^2(C_i)^L$ admitting the weaker
  stability bound
  \begin{equation}
    \label{eq:44}
    \| \Rt_h^{i, i-1} w\|_{C_i} \le (1 + \Cstab h_{i-1})\| w\|_{C_{i-1}}
  \end{equation}
  with some mesh and layer independent constant $\Cstab>0$ for all
  $w \in L^2(C_{i-1})^L$.  For any $i > j$, consider
  $ {\Rt_h}^{i, j} = {\Rt_h}^{i, i-1} \circ {\Rt_h}^{i-1, i-2} \circ
  \cdots\circ {\Rt_h}^{j+1, j}.$ Note that for any $i > j$, using the
  arithmetic-geometric mean inequality and the inequality
  $(1+ \alpha)^m \le e^{\alpha m}$,
  \begin{align*}
    (1 & +\Csta h_j)
    (1  +\Csta h_{j+1}) \cdots (1+ \Csta h_i)
    \le 
    \prod_{i=1}^m (1+\Csta h_i)
    \\
    & \le
    \left[\frac 1 m \sum_{i=1}^m (1+\Csta h_i)\right]^m
    \le
      \left[ 1 + \frac{\Csta}{m} \sum_{i=1}^m h_i\right]^m
      \le \exp\Big( {\Csta \sum_{i=1}^m h_i}\Big).
  \end{align*}
  Therefore, whenever~\eqref{eq:32} holds,  iterative
  application of~\eqref{eq:44}  gives the following layer-uniform
  bound for  any $i> j$:
  \[
    \| \Rt_h^{i, j} w\|_{C_i} \le \,e^{\Cstab T}\, \| w \|_{C_j}.
  \]
  Using this in place of Lemma~\ref{lem:Rh_bound}, the proof of
  Theorem~\ref{thm:semidiscrete} can be extended, replacing
  $R_h^{i, j}$ by $\Rt_h^{i, j}$, $E^{i, j}$ by
  $\Et^{i, j} = R^{i, j} - \Rt_h^{i, j}$, and ``$\le$''
  in~\eqref{eq:15} by ``$\lesssim$'' subsuming the $T$-dependent
  constant into the error estimates.
\end{remark}

\section{Analysis of fully discrete schemes} \label{sec:fully-discr}

In this section we use time stepping schemes to arrive at practical
fully discrete schemes from the semidiscretization studied in the
previous section.  Before studying these fully discrete schemes on a
mapped tent, it is useful to quickly make a few observations on the
time derivatives and  Taylor expansion of the exact solution.

\subsection{Preparatory observations}

The bilinear form $a(\cdot, \cdot)$ defines an operator $A$ from
$\Hvh$ to its dual space $(\Hvh)'$ in the usual way:
$(Aw)(v) = a(w, v)$ for $w, v \in \Hvh$.  Recall the previously
defined $L^2$ projector $P_h: \Lv \to \Vhv$. Since
$\Vhv \subset \Hvh$, the projector $P_h$ extends naturally from $\Lv$
to $(\Hvh)'$, so, e.g., $P_h A : \Hvh \to \Vhv$ satisfies
$(P_hA w, v_h)_\vtx = (A w)(v_h) = a(w, v_h)$ for all $w\in \Hvh$ and
$v_h \in \Vhv$.  While describing fully discrete schemes,
$A_h: \Vhv \to \Vhv$, defined by
$
  (A_h w, v)_\vtx  = a(w, v),$ for all $ w, v \in \Vhv$
will be useful. 
One may also consider $\At: {\Hv} \to \Lv$
defined by $ (\At w, v)_\vtx = (\divx \big[\delta f(w)\big], v)_\vtx,$
for all $ w \in {\Hv}$ and $v \in \Lv.$ It is easy to see
from~\eqref{eq:2} of Lemma~\ref{lem:intgparts} that $A$ coincides with
$\At$ on functions $w \in \Hv$ with $(\Dc - B )w=0$ on $\d\om$. In
particular, on such functions $w$, we may view $Aw $ as a function in
$\Lv$. The pull back $\uh$ of 
the exact hyperbolic solution $u$ from a tent $\Tv$ to
the cylinder $\Tvh$ is one such function. Therefore the following
equation holds in $\Lv$:
\begin{equation}
  \label{eq:21}
  \d_{\hat t} (M \hat u) = A \hat u, \qquad 0 \le \hat t\le 1.
\end{equation}

We will proceed  assuming that the exact solution
$\hat u$ is regular enough to admit
the Taylor expansion
\begin{equation}
  \label{eq:u-TaylorExp}
  \uh(\tau) = \sum_{k=0}^{s} \frac{\tau^k}{k!} \uh^{(k)}(0)
  + \rho_{s+1}(\tau),
\end{equation}
for some $s\ge 1$. Here, $\uh^{(k)}(\hat t)$ denotes the $k$th order
time derivative $ d^k \uh / d \hat t^k$ (which is a function in
${\Hv}$ when the solution is smooth---see
Lemma~\ref{lem:time-derivative-uhat} below), and the remainder term
$\rho_{s+1}(\tau)$ can be expressed as the $\Hv$-valued Riemann
integral
\begin{equation}
  \label{eq:33}
  \rho_{s+1}(\tau) = \frac{\tau^{s+1}}{s!} \int_0^1 (1-\hat t)^s
  \uh^{(s+1)}(\hat t \tau) \; d\hat t.  
\end{equation}
It is well known that the expansion~\eqref{eq:u-TaylorExp} holds for
$\tau$ in an interval containing $0$ whenever $\uh$ is $s+1$ times
continuously differentiable (as an ${\Hv}$-valued function) in that
interval. When applied to a spacetime hyperbolic solution $u$ in the
physical domain, the smallness of the higher order terms
in~\eqref{eq:u-TaylorExp} (written there in terms of the mapped
function $\hat u$), is evident from the following lemma, since
$\delta(x) \lesssim \hv$. 

\begin{lemma}
  \label{lem:time-derivative-uhat}
  The function $\hat u = u \circ \vPhi$ satisfies
  \[
    \uh^{(k)} = (\partial_t^k u \circ \vPhi)\, \delta^k.
  \]
  Consequently, at each pseudotime $\hat t$, within each spatial  element
  $K \in \ovh$, as a function of the spatial variable $x$,
  $\uh^{(k)}(\hat t)$ is as smooth as
  $(\partial_t^k u)(x, \vphi(x, \hat t))$.  Moreover,
  $\uh^{(k)}(\hat t)$ is in $\Hv$ if $\partial_t^k u$ is continuously
  differentiable in $\Tv$.
\end{lemma}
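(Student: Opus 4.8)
The plan is to derive the pointwise time-derivative identity by a direct chain-rule computation and then to deduce the regularity assertions from it. I would start from $\hat u(x,\hat t) = u(x,\varphi(x,\hat t))$ with $\varphi(x,\hat t) = \vpbot(x) + \hat t\,\delta(x)$, so that $\partial_{\hat t}\varphi(x,\hat t) = \delta(x)$ is independent of $\hat t$. Differentiating in $\hat t$ by the chain rule gives $\hat u^{(1)}(x,\hat t) = (\partial_t u)(x,\varphi(x,\hat t))\,\delta(x)$, and since the factor $\delta(x)$ carries no $\hat t$-dependence, an induction on $k$ immediately yields $\hat u^{(k)}(x,\hat t) = (\partial_t^k u)(x,\varphi(x,\hat t))\,\delta(x)^k$, that is, $\hat u^{(k)} = (\partial_t^k u\circ\vPhi)\,\delta^k$. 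One point worth a word of care: in the statement (and in the Taylor expansion) $\hat u^{(k)}$ means the derivative of an $\Hv$-valued map, so strictly one must check that this Banach-space derivative coincides with the pointwise formula just obtained. I would do this by a routine difference-quotient argument, bounding $\big(\hat u(\cdot,\hat t+h) - \hat u(\cdot,\hat t)\big)/h$ and its spatial gradient uniformly on the fixed bounded patch $\ov$ and invoking the joint continuity of the relevant derivatives of $u$ on $\Tv$.

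For the second assertion I would fix $\hat t$ and a patch element $K\in\ovh$. The identity just derived shows that $\hat u^{(k)}(\hat t)$ differs from $x\mapsto(\partial_t^k u)(x,\varphi(x,\hat t))$ only by the factor $\delta^k$, which is a polynomial on $K$ (since $\delta=\vptop-\vpbot$ is affine on $K$, being a difference of lowest-order Lagrange finite element functions); multiplication by a polynomial does not change spatial smoothness, so $\hat u^{(k)}(\hat t)|_K$ is as smooth in $x$ as $(\partial_t^k u)(x,\varphi(x,\hat t))$ there. Because $x\mapsto\varphi(x,\hat t)$ is affine on $K$, this in turn is governed by the smoothness of $\partial_t^k u$ on $\Tv$.

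Finally, for $\hat u^{(k)}(\hat t)\in\Hv = H^1(\ov)^L$: if $\partial_t^k u$ is continuously differentiable on $\Tv$, then on each $K\in\ovh$ the function $x\mapsto(\partial_t^k u)(x,\varphi(x,\hat t))\,\delta(x)^k$ is $C^1$, being a composition and product of $C^1$ functions of $x$; moreover it is continuous across the interelement facets of $\ovh$ because $\varphi(\cdot,\hat t)$ and $\delta$ are continuous on all of $\ov$ and $\partial_t^k u$ is continuous on $\Tv$. Since a function on $\ov$ that is globally continuous and piecewise $C^1$ with respect to $\ovh$ belongs to $H^1(\ov)$, applying this componentwise gives $\hat u^{(k)}(\hat t)\in H^1(\ov)^L = \Hv$.

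I do not expect a genuine obstacle here: the computation is elementary, and the only things to be careful about are that $\varphi$ is merely piecewise affine in $x$ — so no global spatial smoothness of $\hat u^{(k)}(\hat t)$ can be claimed, only elementwise smoothness together with global continuity — and that the chain rule and the induction must be justified at the level of the Banach-space-valued time derivatives used elsewhere in the paper.
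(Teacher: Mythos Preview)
Your proposal is correct and follows essentially the same idea as the paper: both derive the identity by the chain rule, exploiting that $\partial_{\hat t}\varphi = \delta$ is independent of $\hat t$, and then read off the regularity claims from the product form $(\partial_t^k u\circ\vPhi)\,\delta^k$. The only stylistic difference is that the paper obtains all orders $k$ at once by writing $\hat u^{(k)} = D^k(u\circ\vPhi)(e,\ldots,e)$ and invoking the standard higher-order chain rule for affine maps (so that $[\gradxt\vPhi]e = \delta e$ via~\eqref{eq:gradxtPhi}), whereas you induct on $k$; your extra care about matching pointwise and Banach-space-valued derivatives and about global continuity across facets is more explicit than the paper's treatment.
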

\begin{proof}
  Let $e$ denote the spacetime unit vector in the time direction i.e.,
  all its components are zero except for the last (time) component
  which is 1. Then, at some fixed spacetime point $\hat P$ in $\Tvh$,
  we may write
  $ \uh^{(k)}(\hat P) = D^k\uh(\hat P) (e, e, \ldots, e), $ where
  $D^k\uh$ is the multilinear form representing the $k$th order
  Fr\'echet derivative of $\uh$, and $e$ is repeated $k$ times in its
  argument list.  Then, letting $P = \vPhi(\hat P)$ denote the mapped
  point in $\Tv$, by standard arguments~\cite{Ciarl78} for affine
  maps,
  \begin{align*}
    \uh^{(k)}(\hat P)
    & = D^k ( u\circ \vPhi)(\hat P) (e, e, \ldots, e)
    \\
    & =    D^k u (P) ([\gradxt \vPhi]e, [\gradxt \vPhi]e, \ldots,
      [\gradxt \vPhi]e)
    \\
    & =    D^k u(P) (\delta e, \delta e, \ldots, \delta e),
  \end{align*}
  where we have used~\eqref{eq:gradxtPhi} in the last step. Since the last
  term above equals the product of $\delta^k$ and the derivative
  $\d^k u/ \d t^k$ at $P$, the proof is complete.
\end{proof}

In view of Lemma~\ref{lem:time-derivative-uhat}, when the exact
solution is smooth in the physical spacetime, we expect it to have the
following (semi)norms finite, in addition to the ones
in~\eqref{eq:spatialseminorm}:
\begin{equation}
  \label{eq:seminorms2}
  \begin{gathered}
  |w |_{\vtx, l, m} =
  \sup_{0 \le \tau \le 1}
  \sum_{k=0}^m 
  \left| \hat w^{(k)}(\tau) 
  \right|_{H^{l}(\ovh)^L}, 
  \\
  \|w \|_{\infty, \vtx}
  = \sup_{0 \le \tau \le 1}
  \| \hat w (\tau) \|_\vtx, \qquad 
  \|w \|_{s, \infty, \vtx}
  = \sum_{\ell=0}^s \| \d_t^\ell w\|_{\infty, \vtx}.
  \end{gathered}
\end{equation}
When $m=0$, the first seminorm coincides with the seminorm
in~\eqref{eq:spatialseminorm}.  The next result bounds the Taylor
remainder term in terms of the mapped time derivative
$\d_t^s u \circ \vPhi$.

\begin{lemma}
  \label{lem:rho-Taylor-remainder}
  The Taylor remainder term satisfies  $
  \| \rho_{s} \|_\vtx \lesssim \tau^s \hv^{s} \| \d_t^{s} u \|_{\infty, \vtx}.
  $
\end{lemma}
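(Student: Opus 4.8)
The plan is to start from the integral representation of the Taylor remainder in~\eqref{eq:33} with $s$ in place of $s+1$, namely
\[
  \rho_{s}(\tau) = \frac{\tau^{s}}{(s-1)!} \int_0^1 (1-\hat t)^{s-1}
  \uh^{(s)}(\hat t \tau) \; d\hat t,
\]
and to bound its $\Lv$-norm by moving the norm inside the Riemann integral. First I would use the triangle inequality (for $\Lv$-valued integrals) to get
$\| \rho_s(\tau)\|_\vtx \le \frac{\tau^s}{(s-1)!} \int_0^1 (1-\hat t)^{s-1} \| \uh^{(s)}(\hat t \tau)\|_\vtx \, d\hat t$. The inner integral of $(1-\hat t)^{s-1}$ over $[0,1]$ is $1/s$, so the numerical prefactor is harmless and absorbed into ``$\lesssim$''. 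The remaining task is to bound $\| \uh^{(s)}(\hat t\tau)\|_\vtx$ uniformly over the pseudotime argument.

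Next I would invoke Lemma~\ref{lem:time-derivative-uhat}, which gives the pointwise identity $\uh^{(s)} = (\partial_t^s u \circ \vPhi)\, \delta^s$. Since $\delta(x) = \vptop^\vtx(x) - \vpbot^\vtx(x)$ is a nonnegative function bounded by the tent height, and the causality condition together with shape regularity forces $\delta(x) \lesssim \hv$ pointwise on $\ov$ (the tent height is controlled by the spatial patch diameter via $\|\gradx\varphi\|_2 \le 1/\cmax$), we may pull $\delta^s \lesssim \hv^s$ out of the norm:
\[
  \| \uh^{(s)}(\sigma)\|_\vtx
  = \Big( \int_\ov \delta^{2s} \,|(\partial_t^s u)(x, \vphi(x,\sigma))|^2 \Big)^{1/2}
  \lesssim \hv^s \Big( \int_\ov |(\partial_t^s u)(x, \vphi(x,\sigma))|^2\Big)^{1/2}.
\]
The integral on the right is exactly $\| (\partial_t^s u \circ \vPhi)(\sigma)\|_\vtx$, which by definition~\eqref{eq:seminorms2} is at most $\sup_{0\le\sigma\le 1}\|\partial_t^s u \circ \vPhi(\sigma)\|_\vtx = \|\partial_t^s u\|_{\infty,\vtx}$, since $\|w\|_{\infty,\vtx} = \sup_\sigma \|\hat w(\sigma)\|_\vtx$. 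Feeding this uniform bound back into the integral over $\hat t$ gives $\|\rho_s(\tau)\|_\vtx \lesssim \tau^s \hv^s \|\partial_t^s u\|_{\infty,\vtx}$, as claimed.

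There is no serious obstacle here; the lemma is essentially a bookkeeping consequence of the remainder formula plus Lemma~\ref{lem:time-derivative-uhat}. The one point that requires a moment's care is justifying $\delta \lesssim \hv$: one should note that the tent construction together with causality~\eqref{eq:causalitycondition} bounds the slope of $\vpbot^\vtx$ and $\vptop^\vtx$, hence their difference $\delta$ vanishes on $\partial\ov$ and attains a maximum of order $\hv/\cmax$ at the pitch vertex, so $\delta \lesssim \hv$ uniformly over the shape-regular family (this is already implicitly used in the surrounding text, e.g.\ "since $\delta(x) \lesssim \hv$" before the lemma and "$\delta \lesssim \hv$" in the proof of Lemma~\ref{lem:a_proj_bd}). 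A second minor point is that one must know $\uh^{(s)}$ is a well-defined $\Lv$-valued (indeed $\Hv$-valued) function of pseudotime so that the Riemann integral in~\eqref{eq:33} makes sense; this is supplied by Lemma~\ref{lem:time-derivative-uhat} under the assumed smoothness of $u$ in $\Tv$.
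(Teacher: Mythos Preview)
Your proof is correct and follows essentially the same route as the paper's: start from the integral remainder formula, pass the $\Lv$-norm inside the pseudotime integral, invoke Lemma~\ref{lem:time-derivative-uhat} to express $\uh^{(s)}$ as $\delta^s(\partial_t^s u\circ\vPhi)$, and use $\delta\lesssim\hv$. The only cosmetic difference is that the paper squares first and appeals to Fubini and Cauchy--Schwarz, whereas you apply the triangle (Minkowski) inequality directly; both arguments are equivalent here.
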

\begin{proof}
  Starting from~\eqref{eq:33}, by Fubini's theorem and
  Cauchy-Schwarz inequality,
  \begin{align*}
    \tau^{-2s} \| \rho_{s} \|_{\vtx}^2
    &\lesssim
      \int_0^1 (1 - \hat t)^{2s}
      \| 
      \uh^{(s)}( \hat t \tau) \|_\vtx^2\, 
      d\hat t
      \le \bigg(\sup_{0 \le \tau \le 1}
      \|      \uh^{(s)}( \tau) \|_\vtx^2 \bigg)
      \int_0^1 (1 - \hat t)^{2s}\, d \hat t
    \\
    & \lesssim \sup_{0 \le \tau \le 1}
      \| \delta^{s} (\d_t^{s} u \circ \vPhi)(\tau) \|_\vtx^2,
  \end{align*}
  due to Lemma~\ref{lem:time-derivative-uhat}. Since  $\delta \lesssim
  \hv,$ the result follows.
\end{proof}

\begin{lemma}
  \label{lem:exact-time-deriv}
  For any $k \ge 1,$ whenever the exact time derivative
  $\uh^{(k-1)}(0)$ exists in $\Hv$, we have
  $
    \uh^{(k)}(0) = M_0^{-1} (A + k M_1) \,\uh^{(k-1)}(0).
  $
\end{lemma}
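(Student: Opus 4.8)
The identity will follow by differentiating the exact evolution equation~\eqref{eq:21}, $\dth(M(\hat t)\uh) = A\uh$, repeatedly in pseudotime and then setting $\hat t = 0$. The key simplification is that the mass operator $M(\hat t) = \Mco - \hat t\,\Mcs$ is affine in $\hat t$: its pseudotime derivative is the constant operator $-\Mcs$ and all higher derivatives vanish, so the Leibniz rule produces only two terms,
\[
  \frac{d^{k}}{d\hat t^{\,k}}\big(M(\hat t)\,\uh\big) = M(\hat t)\,\uh^{(k)} - k\,\Mcs\,\uh^{(k-1)},
\]
valid whenever $\uh$ is $k$ times continuously pseudotime-differentiable as an $\Hv$-valued function near $0$. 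Under the assumed smoothness of $u$ this holds by Lemma~\ref{lem:time-derivative-uhat}, which also shows that each $\uh^{(j)}$ is a scalar ($\delta^{j}$) multiple of $\partial_t^{j}u\circ\vPhi$.

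Applying $d^{k-1}/d\hat t^{\,k-1}$ to both sides of~\eqref{eq:21} and using the display above on the left, the identity
\[
  M(\hat t)\,\uh^{(k)} - k\,\Mcs\,\uh^{(k-1)} = A\,\uh^{(k-1)}, \qquad 0 \le \hat t \le 1,
\]
will follow once one checks that $d^{k-1}/d\hat t^{\,k-1}(A\uh) = A\,\uh^{(k-1)}$ holds as an $\Lv$-valued quantity. For this, note that $A$ is the fixed, pseudotime-independent operator induced by $a(\cdot,\cdot)$, hence commutes with $d/d\hat t$, and that each $\uh^{(j)}(\hat t)$ still satisfies the homogeneous boundary condition $(\Dc - B)\uh^{(j)} = 0$ on $\d\om$: differentiate~\eqref{eq:7} in $t$ (the boundary operators being time-independent), invoke~\eqref{eq:kernelB}, and use the scalar-multiple structure from Lemma~\ref{lem:time-derivative-uhat}. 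Consequently $A\uh^{(j)}$ lies in $\Lv$ exactly as in the discussion preceding~\eqref{eq:21}. Evaluating the displayed identity at $\hat t = 0$, where $M(0) = \Mco$, yields $\Mco\,\uh^{(k)}(0) = (A + k\,\Mcs)\,\uh^{(k-1)}(0)$, and the claim follows on applying $\Mcoinv$, which is bounded by Lemma~\ref{lem:Mspd}.

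The computation is routine; the one place deserving care — and the natural spot for a gap — is the bookkeeping that keeps $A$ an $\Lv$-valued operator at every differentiation order, i.e.\ the verification that the homogeneous boundary condition propagates to all the time derivatives $\uh^{(j)}(0)$, which is precisely what licenses identifying $A$ with $\At$ throughout.
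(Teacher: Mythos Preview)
Your proof is correct and follows essentially the same route as the paper's: differentiate \eqref{eq:21} $k-1$ times, apply the Leibniz rule to the affine map $\hat t \mapsto M(\hat t)\uh$, and evaluate at $\hat t = 0$. Your additional care in verifying that the boundary condition propagates to the derivatives $\uh^{(j)}$ (so that $A\uh^{(j)}$ remains $\Lv$-valued) is a point the paper leaves implicit.
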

\begin{proof}
  Differentiating both sides of~\eqref{eq:21} $k-1$ times,
  $(M \uh)^{(k)} (\hat t) = A \uh^{(k-1)}(\hat t)$. Simplifying the
  left hand side by Leibniz rule and the linearity of $M(\hat
  t)$, we have
  \[
    M(\hat t) \uh^{(k)}(\hat t) - k M_1 \uh^{(k-1)}(\hat t) =
    A \uh^{(k-1)}(\hat t).
  \]
  Evaluating  at $\hat t=0$ and rearranging, the proof is complete.
\end{proof}


Note that $\Vhv$ is an invariant subspace of the previously defined
operators $M_0$ and $M_1$, due to~\eqref{eq:G_L_const}. It will be
understood from context whether we consider $M_0, M_1$ as operators on
$L^\vtx$ or as operators on $\Vhv$.  For operators on $\Vhv$, we
define a discrete operator norm, analogous to \eqref{eq:20}, for
operators $\mathcal{O}_h$ on $\Vhv$, by
\[
  \|  \mathcal{O}_h\|_{\vtx, h} := 
    \sup_{v_h, w_h \in \Vhv} 
    \frac{( \mathcal{O}_hv_h, w_h)_\vtx}{ \| v_h \|_\vtx \| w_h \|_{\vtx}}
\]
for all $v_h, w_h \in \Vhv$.

\begin{lemma}
  \label{lem:AXRbounds}
  We have 
  $
    \|  A_h\|_{\vtx, h} \lesssim 1, \;
    \| M_1 \|_{\vtx, h}  \lesssim 1, \;
    \|M \|_{\vtx, h} \lesssim 1, \;
    \|M^{-1} \|_{\vtx, h} \lesssim 1.
  $
\end{lemma}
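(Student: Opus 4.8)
The plan is to treat the bound on $A_h$ as the only substantive part, since the other three bounds follow directly from Lemma~\ref{lem:Mspd}. Indeed, $M_0$ and $M_1$ (hence $M = M_0 - \hat t M_1$) leave the subspace $\Vhv \subset \Lv$ invariant by~\eqref{eq:G_L_const}, and since $M$ is invertible on $\Lv$ and $\Vhv$ is finite dimensional, $M^{-1}$ leaves $\Vhv$ invariant as well. For any operator $\mathcal{O}$ on $\Lv$ that maps $\Vhv$ into $\Vhv$, the supremum defining $\| \mathcal{O} \|_{\vtx, h}$ ranges over a subset of the pairs appearing in the supremum~\eqref{eq:20} defining $\| \mathcal{O} \|_\vtx$, so $\| \mathcal{O} \|_{\vtx, h} \le \| \mathcal{O} \|_\vtx$; applying this with $\mathcal{O} = M_1, M, M^{-1}$ and invoking Lemma~\ref{lem:Mspd} gives $\| M_1 \|_{\vtx, h}, \| M \|_{\vtx, h}, \| M^{-1} \|_{\vtx, h} \lesssim 1$.

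It remains to show $\| A_h \|_{\vtx, h} \lesssim 1$, i.e., $a(w_h, v_h) \lesssim \| w_h \|_\vtx \| v_h \|_\vtx$ for all $w_h, v_h \in \Vhv$. I would estimate $a(\cdot,\cdot)$ through its expression~\eqref{eq:6}, bounding the volume and flux contributions element by element. Two scaling facts do the work. First, the causality condition~\eqref{eq:causalitycondition} forces $\| \gradx \delta \|_2 = \| \gradx \vptop - \gradx \vpbot \|_2 \lesssim 1$, and since $\delta$ vanishes on $\d\ov$ this gives $\delta \lesssim \hv$ pointwise on $\ov$ (in particular $\delta = 0$ on $\d\ov \setminus \d\om$, so the flux integrals on those facets drop out and no trace of $w_h$ outside $\ov$ is ever needed). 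Second, by shape regularity of $\oh$ the patch $\ovh$ has a bounded number of elements, all of diameter comparable to $\hv$, so the inverse inequality $\| \d_j v_h \|_K \lesssim \hv^{-1} \| v_h \|_K$ and the discrete trace inequality $\| v_h \|_{\d K} \lesssim \hv^{-1/2} \| v_h \|_K$ hold on every $K \in \ovh$ with the single length scale $\hv$. For the volume term, $(\delta \Lc j w_h, \d_j v_h)_K \lesssim \hv \, \| w_h \|_K \, \hv^{-1} \| v_h \|_K = \| w_h \|_K \| v_h \|_K$, using the assumed boundedness of $\Lc j$. For the flux term, the form~\eqref{eq:NumFlux} of $\Fh_{w_h}$ together with boundedness of $\Dc = \sum_j n_j \Lc j$, of $S$ (by~\eqref{eq:Spd}), and of $\Dc + B$ (by~\eqref{eq:Bpd}) gives $\| \Fh_{w_h} \|_{\d K} \lesssim \| w_h \|_{\d K} + \| w_{h,o} \|_{\d K}$, where $w_{h,o}$ denotes traces from the neighbors $K_o \in \ovh$ sharing an interior patch facet with $K$; combining $\delta \lesssim \hv$ with the trace inequality on $K$ and on each $K_o$ yields $(\delta \Fh_{w_h}, v_h)_{\d K} \lesssim \big(\| w_h \|_K + \sum_{K_o} \| w_h \|_{K_o}\big) \| v_h \|_K$.

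Summing these element-wise estimates over $K \in \ovh$ and applying the Cauchy--Schwarz inequality, together with the fact that each element appears in a bounded number of neighbor lists, gives $a(w_h, v_h) \lesssim \| w_h \|_\vtx \| v_h \|_\vtx$, hence $\| A_h \|_{\vtx, h} \lesssim 1$. There is no genuine analytic obstacle here; the one step requiring care is the uniform use of a single mesh scale $\hv$ in the inverse and trace inequalities across the patch. This is legitimate precisely because shape regularity of the conforming simplicial mesh $\oh$, together with the fact that $\ov$ is a single vertex patch, forces the element diameters in $\ovh$ to be mutually comparable and their number to be bounded; the wave speed, material coefficients, polynomial degree, and shape-regularity constant are all absorbed into the generic constant in ``$\lesssim$''.
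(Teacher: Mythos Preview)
Your proof is correct and follows essentially the same approach as the paper: bound the volume terms of $a(\cdot,\cdot)$ using $\delta \lesssim h$ together with an inverse inequality, bound the flux terms using $\delta \lesssim h$ together with a discrete trace inequality, and defer the mass-matrix bounds to Lemma~\ref{lem:Mspd}. The only cosmetic difference is that the paper works with the local element scale $h_K$ in the inverse and trace estimates while you use the single patch scale $\hv$ throughout (justified, as you note, by shape regularity of the vertex patch); your explicit remark that $\delta$ vanishes on $\partial\ov$, so no traces from outside the patch are needed, is a helpful clarification left implicit in the paper.
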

\begin{proof}
  To prove the first inequality,
  consider the terms that make up
  $a(v_h, w_h) = ( A_hv_h, w_h)_\vtx$
  for any $v_h, w_h \in \Vhv$. On  any $K \in \ovh$,
  since $\Lc j$ is uniformly bounded and $\delta \lesssim h_K$, 
  \[
    ( \delta \Lc j v_h,  \d_j w_h)_K
    \lesssim \| v_h \|_K h_K\| \d_j w_h \|_K
    \lesssim \| v_h \|_K \| w_h \|_K
  \]
  where we have applied an inverse inequality in the last step.
  Next, consider an element boundary term in $(A_hv_h, w_h)$, restricted
  to say a facet $F \subset \d K$, shared with 
  the boundary of another element $K_o$ in $\ovh$:
  \[
    (\delta\, \Dc \{v_h\}, w_h|_{\d K})_F
    \lesssim
    \left(
      h_{K} \| v_h \|_{\d K}^2 + 
      h_{K_o} \| v_h \|_{\d K_o}^2
    \right)^{1/2}
    \left( h_{K}^{1/2} \| w_h \|_{\d K}\right)
    \lesssim
    \| v_h \|_{\ov} \| w_h \|_K,
  \]
  where we have again used $\delta \lesssim h_K$ and local scaling
  arguments.  Continuing to use similar arguments on all the remaining
  terms that make up $( A_hv_h, w_h)_\vtx$, we obtain
  $\| A_h \|_{\vtx, h} \lesssim 1$.  Finally, Lemma~\ref{lem:Mspd} shows
  that $\| M_1 \|_{\vtx, h}$, $\|M(\tau)\|_{\vtx, h}$, and
  $\|M(\tau)^{-1}\|_{\vtx, h}$ also admit mesh-independent bounds.
\end{proof}

The projector $P_h$ enjoys the commutativity properties
\begin{equation}
  \label{eq:MPcommute}
  M_1 P_h = P_h M_1,
  \quad
  M_0 P_h = P_h M_0, 
\end{equation}
because of~\eqref{eq:G_L_const}. Although a similar commutativity
identity cannot be expected of $A_h$, we have the following lemma.

\begin{lemma}
  \label{lem:APcomm}
  For any $w \in H^{l}(\ovh)^L$, $1\le l \le p+1$, the function $\eta_h
  = (A_hP_h - P_h A) w$ satisfies
  $
    \| \eta_h \|_\vtx \,\lesssim\, \hv^l \, |w|_{H^l(\ovh)^L}.
  $
\end{lemma}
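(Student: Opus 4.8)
The plan is to test $\eta_h$ against an arbitrary $v_h \in \Vhv$ and recognize the pairing as a projection-error term already controlled by Lemma~\ref{lem:a_proj_bd}. Note first that $w \in H^{l}(\ovh)^L \subseteq \Hvh$ since $l \ge 1$, so $Aw$ is well defined, and $\eta_h = (A_h P_h - P_h A)w \in \Vhv$ since $P_h$ maps into $\Vhv$. By the definition of $A_h$ and of the natural extension of $P_h$ to $(\Hvh)'$, for every $v_h \in \Vhv$ one has $(A_h P_h w, v_h)_\vtx = a(P_h w, v_h)$ and $(P_h A w, v_h)_\vtx = (Aw)(v_h) = a(w, v_h)$, whence
\[
  (\eta_h, v_h)_\vtx = a(P_h w - w, v_h) = -\,a(w - P_h w, v_h).
\]
Lemma~\ref{lem:a_proj_bd} then gives $(\eta_h, v_h)_\vtx \lesssim \hv^{l}\,|w|_{H^{l}(\ovh)^L}\,|v_h|_d$ for all $v_h \in \Vhv$.

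The remaining ingredient is the bound $|v_h|_d \lesssim \|v_h\|_\vtx$ on the discrete space. Using the identity~\eqref{eq:8} of Lemma~\ref{lem:intgparts}, $|v_h|_d^2 = d(v_h,v_h)$ is a sum of facet integrals of $\delta\, S\jmp{v_h}_F \cdot \jmp{v_h}_F$ over $\Fcvi$ and of $\delta\, B v_h \cdot v_h$ over $\Fcvb$; since $\delta \lesssim \hv$ and $\|S\|_2, \|B\|_2 \lesssim 1$ by~\eqref{eq:Spd} and~\eqref{eq:Bpd}, this is $\lesssim \hv \sum_{K \in \ovh}\|v_h\|_{\d K}^2$. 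A discrete trace (inverse) inequality yields $\|v_h\|_{\d K}^2 \lesssim h_K^{-1}\|v_h\|_K^2$, and shape regularity forces $h_K \sim \hv$ for every $K \in \ovh$, so $|v_h|_d^2 \lesssim \|v_h\|_\vtx^2$. Combining this with the previous paragraph gives $(\eta_h, v_h)_\vtx \lesssim \hv^{l}\,|w|_{H^{l}(\ovh)^L}\,\|v_h\|_\vtx$ for all $v_h \in \Vhv$; choosing $v_h = \eta_h$ and dividing by $\|\eta_h\|_\vtx$ completes the proof.

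This is essentially a one-line reduction to Lemma~\ref{lem:a_proj_bd} followed by routine scaling, so I do not expect a genuine obstacle. The only point requiring some care is the estimate $|\cdot|_d \lesssim \|\cdot\|_\vtx$ on $\Vhv$: one must invoke the discrete trace inequality element-by-element on the vertex patch together with the uniform comparability of element sizes within a shape-regular patch, which is precisely what lets the weight $\delta \lesssim \hv$ in $d(\cdot,\cdot)$ absorb the factor $h_K^{-1}$ produced by the trace inequality.
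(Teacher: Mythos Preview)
Your proof is correct and follows essentially the same route as the paper: test $\eta_h$ against itself, recognize $(\eta_h,\eta_h)_\vtx = a(P_h w - w,\eta_h)$, invoke Lemma~\ref{lem:a_proj_bd}, and finish with $|\eta_h|_d \lesssim \|\eta_h\|_\vtx$. The only cosmetic difference is in that last step: the paper obtains it from the algebraic identity $|\eta_h|_d^2 = -((2A_h+M_1)\eta_h,\eta_h)_\vtx$ together with the operator bounds of Lemma~\ref{lem:AXRbounds}, whereas you re-derive it from~\eqref{eq:8} via discrete trace inequalities and the scaling $\delta \lesssim \hv$, which is equally valid (and indeed is essentially what underlies the proof of Lemma~\ref{lem:AXRbounds}).
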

\begin{proof}
  Since
  $\| \eta_h \|_\vtx^2
  = (A_h P_h w, \eta_h)_\vtx - (P_h A w, \eta_h)_\vtx
  = a(P_h w, \eta_h)_\vtx - a( w, \eta_h)_\vtx$, 
  \begin{align*}
    \| \eta_h \|_\vtx^2
    & = a(P_h w - w, \eta_h)
      \,\lesssim\, \hv^l |w|_{H^l(\ovh)^L} |\eta_h|_d
  \end{align*}
  by Lemma~\ref{lem:a_proj_bd}.  By Lemma~\ref{lem:AXRbounds},
  $|\eta_h|_d^2 =  -((2 A_h+M_1) \eta_h, \eta_h)_\vtx
  \lesssim \| \eta_h \|_\vtx^2$, so the inequality of
  the lemma follows.
\end{proof}

\subsection{Lowest order tent-implicit scheme}
\label{ssec:lowest-order-tent-implicit}

While the overall MTP strategy is a tent-by-tent time-marching
strategy akin to explicit methods, within a mapped tent, one may
choose between explicit or implicit schemes.  By a ``tent-implicit''
scheme, we mean a method that solves the
semidiscretization~\eqref{eq:DG} on a mapped tent using implicit time
stepping.  Although this requires matrix inversion, the size of the
matrix is only as large as the number of spatial degrees of freedom in
one tent (much smaller than the size of the global matrix that needs
to be inverted in standard implicit schemes for method of lines
discretizations).  Numerical results using tent-implicit schemes  of
various orders were reported first in~\cite[\S5.4]{GopalSchobWinte17}. In
this subsection, we provide a convergence analysis of the lowest-order case.

To  derive the lowest order tent-implicit method, we begin by 
rewriting \eqref{eq:DG} in a form analogous to \eqref{eq:21}, i.e.,
\[
  \d_{\hat t} (M \uh_h ) = A_h \uh_h, \quad 0 \le \hat t \le 1.
\]
Then, putting $y_h = M \uh_h$, we have
$\partial_{\hat t}\,y_h = A_h M^{-1} y_h$. The implicit Euler
method applied to this defines
an approximation $y_{h1}(\tau)$ to $y_h(\tau)$
given by $ y_{h1}(\tau) - y_h(0) = \tau A_hM^{-1} y_{h1}(\tau).  $ Since
$\uh_h = M^{-1} y_h$, an approximation to $\uh_h(\tau)$ is furnished
by $M^{-1} y_{h1}(\tau)$, which after simplification becomes
$M^{-1} (I - \tau A_h M^{-1})^{-1} M_0 \uh(0).  $ This motivates the
following definition of the discrete propagator.

\begin{definition}[Lowest order tent-implicit flow: $\Rimp_{h1}(\tau)$]
  Define   $\Rimp_{h1}(\tau) : \Vhv \to \Vhv$ by 
\[
  \Rimp_{h1}(\tau) = 
  M(\tau)^{-1} (I - \tau A_h M(\tau)^{-1})^{-1}
  M_0.
\]
The two inverses required for this definition are both well defined:
first, 
$M$ is invertible by Lemma~\ref{lem:Mspd}; second, 
$I - \tau A_h M^{-1}$ is invertible because
\begin{align*}
  (I - \tau A_h M^{-1}) M
  & = M - \tau A_h
    = (M_0 -\frac {\tau}{2} M_1)  - \frac{\tau}{2}    (2A_h + M_1),
\end{align*}
together with Lemmas~\ref{lem:intgparts} and~\ref{lem:Mspd}
imply that for any $0 \ne v\in \Vhv$,
\[
  (  (I - \tau A_h M^{-1}) M v, v)_\vtx
  \ge \| v\|_{M(\tau/2)}^2  + | v|_d^2 > 0.
\]
\end{definition}

We proceed to prove convergence of the scheme, beginning with the next
stability result that closely resembles the inequality
of Lemma~\ref{lem:stability_semidiscrete}.

\begin{lemma}[Unconditional strong stability]
  \label{lem:stability-imp}
  For any $v \in \Vhv$ and any $0 \le \tau \le 1$, 
  \[
    \| \Rimp_{h1}(\tau) v \|_{M(\tau)}^2 
    \le \| v \|_{M_0}^2.
  \]
\end{lemma}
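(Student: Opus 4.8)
The plan is to reduce the definition of $\Rimp_{h1}(\tau)$ to a single linear identity, and then run an energy argument parallel to the proof of Lemma~\ref{lem:stability_semidiscrete}. First I would set $w = \Rimp_{h1}(\tau) v \in \Vhv$ and abbreviate $M = M(\tau)$. Since $\Rimp_{h1}(\tau) = M^{-1}(I - \tau A_h M^{-1})^{-1} M_0$, multiplying $w = \Rimp_{h1}(\tau)v$ through by $M$ and then by $(I - \tau A_h M^{-1})$ on the left, and using $M^{-1}(Mw)=w$, gives the identity
\[
  M(\tau)\, w \;=\; M_0 v + \tau A_h w .
\]
Pairing this with $w$ in $(\cdot,\cdot)_\vtx$ and recalling $(A_h w, w)_\vtx = a(w,w)$ yields $\|w\|_{M(\tau)}^2 = (M_0 v, w)_\vtx + \tau\, a(w,w)$.

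Next I would dispose of the $a(w,w)$ term using Lemma~\ref{lem:intgparts}: from the definition of $d$ one has $2\,a(w,w) = -(M_1 w, w)_\vtx - d(w,w)$, and $d(w,w)\ge 0$ by~\eqref{eq:8} together with the sign assumptions~\eqref{eq:Spd} and~\eqref{eq:Bpd} (and $\delta\ge0$ on $\ov$). Hence $\tau\,a(w,w) \le -\tfrac{\tau}{2}(M_1 w, w)_\vtx$. The crucial step is then to rewrite this $M_1$-term in terms of the two weighted norms: since $M(\tau) = M_0 - \tau M_1$, we get $\tau (M_1 w, w)_\vtx = \|w\|_{M_0}^2 - \|w\|_{M(\tau)}^2$, so that
\[
  \|w\|_{M(\tau)}^2 \;\le\; (M_0 v, w)_\vtx + \tfrac12\|w\|_{M(\tau)}^2 - \tfrac12\|w\|_{M_0}^2 ,
\]
i.e.\ $\|w\|_{M(\tau)}^2 + \|w\|_{M_0}^2 \le 2\,(M_0 v, w)_\vtx$. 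Finally, since $M_0$ is symmetric positive definite (Lemma~\ref{lem:Mspd}), $(M_0\cdot,\cdot)_\vtx$ is an inner product, so Cauchy--Schwarz and Young's inequality give $2(M_0 v, w)_\vtx \le \|v\|_{M_0}^2 + \|w\|_{M_0}^2$; subtracting $\|w\|_{M_0}^2$ from both sides gives the claim.

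The real point here is not an obstacle so much as a trick: $M_1$ is not sign-definite, so one cannot simply drop or estimate $(M_1 w, w)_\vtx$; instead one must rewrite it exactly as $\tfrac12\bigl(\|w\|_{M(\tau)}^2 - \|w\|_{M_0}^2\bigr)$ and let the $\|w\|_{M_0}^2$ contribution cancel against the term produced by Young's inequality on the right-hand side. That cancellation is what forces the stability constant to be exactly $1$. The only additional care needed is the sign bookkeeping in the $d$-identity and verifying that the $M(\tau)$- and $M_0$-weighted quantities are genuine norms (via Lemma~\ref{lem:Mspd}) before invoking Cauchy--Schwarz; also, the invertibility of the two operators defining $\Rimp_{h1}(\tau)$ is already established in the definition, so $w$ is well defined.
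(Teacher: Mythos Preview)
Your proof is correct and is essentially the same argument as the paper's: both start from the identity $(M(\tau)-\tau A_h)w = M_0 v$, pair with $w$, exploit $d(w,w)\ge 0$ via $2a(w,w)+(M_1 w,w)_\vtx=-d(w,w)$, rewrite using $M_0 = M(\tau)+\tau M_1$, and apply Cauchy--Schwarz/Young in the $M_0$-inner product. The only difference is the order in which you apply Young's inequality and the $d$-identity; the paper applies Young first and then regroups, whereas you handle the $a(w,w)$ term first---the algebra and the ingredients are identical.
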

\begin{proof}
  Let  $v_\tau = \Rimp_{h1}(\tau) v$. Then $(M - \tau A_h) v_\tau =
  M_0 v.$ Taking the inner product with $v_\tau$ on both sides,
  \begin{align*}
    \| v_\tau \|_M^2
    & = (M_0v, v_\tau)_\vtx + \tau (A_h v_\tau, v_\tau)_\vtx
    \\
    & \le \frac 1 2 \| v \|_{M_0}^2 + \frac 1 2 \| v_\tau \|_{M_0}^2
      + \tau (A_h v_\tau, v_\tau)_\vtx
    \\
    & = \frac 1 2 \| v \|_{M_0}^2 + \frac 1 2 \| v_\tau \|_{M}^2
      + \frac{\tau}{2} ((2A_h + M_1) v_\tau, v_\tau)_\vtx.
  \end{align*}
  Now, since $((2A_h + M_1) v_\tau, v_\tau)_\vtx = - | v_\tau|_d^2$
  (see Lemma~\ref{lem:intgparts}), the proof is complete.
\end{proof}

When using any (spatial) polynomial degree $p \ge 0$, we obtain the
following bound for the lowest order method (showing that the rate is
limited by the time discretization error),  which uses the
(semi)norms defined in \eqref{eq:spatialseminorm}
and~\eqref{eq:seminorms2}.

\begin{lemma}[Local error bound]
  \label{lem:local-error-implicit}
  Let $\uh$ denote the exact solution on $\Tvh$ and let
  $\uhatimp_{h1}(\tau) = \Rimp_{h1}(\tau) \uh_h^0$ for some $\uh_h^0 \in \Vhv$.
  Then, 
  \begin{align*}
    \| \uh (\tau) - \uhatimp_{h1} (\tau) \|_{M(\tau)}
    & \;\lesssim\;
      \| \uh(0) - \uh_h^0 
      \|_{M(0)}
      +
      \hv
      \big(
      \|  u\|_{2, \infty, \vtx}
      + | u |_{\vtx, 1}
      \big).
  \end{align*}
\end{lemma}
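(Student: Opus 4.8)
The plan is to split the error through the spatial $L^2$-projection $P_h\uh(\tau)$ of the exact mapped solution, writing $\uh(\tau)-\uhatimp_{h1}(\tau)=\big(\uh(\tau)-P_h\uh(\tau)\big)+\big(P_h\uh(\tau)-\uhatimp_{h1}(\tau)\big)$ and estimating the two pieces separately. The first piece is immediately controlled by the standard $L^2$-projection error estimate, $\|\uh(\tau)-P_h\uh(\tau)\|_\vtx\lesssim \hv\,|\uh(\tau)|_{H^1(\ovh)^L}\le \hv\,|u|_{\vtx,1}$, together with the norm equivalence $\|\cdot\|_{M(\tau)}\lesssim\|\cdot\|_\vtx$ from Lemma~\ref{lem:Mspd}. (Only the crudest, $H^1$-based, projection estimate is needed here, because the overall rate is in any case limited by the time discretization; a sharper projection bound would not improve the result.) All the real work is in bounding $e_h(\tau):=\uhatimp_{h1}(\tau)-P_h\uh(\tau)\in\Vhv$.

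First I would derive an error equation for $e_h$. By the definition of $\Rimp_{h1}$, $(M(\tau)-\tau A_h)\,\uhatimp_{h1}(\tau)=M_0\,\uh_h^0$. On the other hand, integrating the consistency identity~\eqref{eq:21} in $\Lv$ gives $M(\tau)\uh(\tau)=M_0\uh(0)+\int_0^\tau A\uh(\hat t)\,d\hat t$; applying $P_h$, using the commutativity~\eqref{eq:MPcommute} (so that $M(\tau)P_h=P_hM(\tau)$) and moving $P_h$ inside the integral yields $M(\tau)P_h\uh(\tau)=M_0P_h\uh(0)+\int_0^\tau P_hA\uh(\hat t)\,d\hat t$. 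Subtracting, and adding and subtracting $\tau P_hA\uh(\tau)$, I obtain
\[
  (M(\tau)-\tau A_h)\,e_h(\tau)=M_0\big(\uh_h^0-P_h\uh(0)\big)+g_\tau,
  \qquad
  g_\tau:=\tau(A_hP_h-P_hA)\uh(\tau)+P_h\Big[\tau A\uh(\tau)-\int_0^\tau A\uh(\hat t)\,d\hat t\Big].
\]
Here $A\uh$ and $A\uh^{(1)}$ are legitimate $\Lv$-valued objects because $\uh$, hence $\uh^{(1)}$, satisfies $(\Dc-\Bc)\,\cdot=0$ on $\dbdr\Tv$ and therefore $(\Dc-B)\,\cdot=0$ there by~\eqref{eq:kernelB}, exactly as explained around~\eqref{eq:21}.

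Next I would bound $\|g_\tau\|_\vtx$. The commutator term is handled by Lemma~\ref{lem:APcomm} with $l=1$: $\|(A_hP_h-P_hA)\uh(\tau)\|_\vtx\lesssim \hv\,|\uh(\tau)|_{H^1(\ovh)^L}\le \hv\,|u|_{\vtx,1}$. For the time-truncation term, using $\tau\le 1$ and $\|P_h\|_\vtx\le 1$, I would rewrite it as a double integral,
\[
  \Big\|\tau A\uh(\tau)-\int_0^\tau A\uh(\hat t)\,d\hat t\Big\|_\vtx
  =\Big\|\int_0^\tau\!\!\int_{\hat t}^{\tau}(A\uh)'(s)\,ds\,d\hat t\Big\|_\vtx
  \le\tfrac12\sup_{0\le s\le1}\|(A\uh)'(s)\|_\vtx,
\]
and then differentiate the consistency relation $A\uh=M\uh^{(1)}-M_1\uh$ (which follows from~\eqref{eq:21}) to get $(A\uh)'=M\uh^{(2)}-2M_1\uh^{(1)}$. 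By the operator bounds of Lemma~\ref{lem:Mspd} and by Lemma~\ref{lem:time-derivative-uhat} (with $\delta\lesssim\hv$), $\|(A\uh)'(s)\|_\vtx\lesssim\|\uh^{(2)}(s)\|_\vtx+\|\uh^{(1)}(s)\|_\vtx\lesssim \hv^2\|\d_t^2 u\|_{\infty,\vtx}+\hv\|\d_t u\|_{\infty,\vtx}\lesssim \hv\,\|u\|_{2,\infty,\vtx}$, whence $\|g_\tau\|_\vtx\lesssim \hv\big(|u|_{\vtx,1}+\|u\|_{2,\infty,\vtx}\big)$.

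Finally, I would close the estimate with an energy argument mirroring the proof of Lemma~\ref{lem:stability-imp}: testing the error equation against $e_h(\tau)$ and using $((2A_h+M_1)v,v)_\vtx=-|v|_d^2$ (a consequence of Lemma~\ref{lem:intgparts}), the term $\tfrac\tau2(M_1e_h,e_h)_\vtx$ is absorbed to give
\[
  \|e_h(\tau)\|_{M(\tau/2)}^2+\tfrac\tau2\,|e_h(\tau)|_d^2
  =\big(M_0(\uh_h^0-P_h\uh(0)),\,e_h(\tau)\big)_\vtx+\big(g_\tau,\,e_h(\tau)\big)_\vtx .
\]
Cauchy--Schwarz (in the $M_0$-inner product on the first term) together with the norm equivalences of Lemma~\ref{lem:Mspd} (used to pass freely between $\|\cdot\|_{M(\tau/2)}$, $\|\cdot\|_{M(0)}$, $\|\cdot\|_{M(\tau)}$ and $\|\cdot\|_\vtx$) then yield $\|e_h(\tau)\|_{M(\tau)}\lesssim\|\uh_h^0-P_h\uh(0)\|_{M(0)}+\|g_\tau\|_\vtx$; combining with $\|\uh(0)-P_h\uh(0)\|_{M(0)}\lesssim\hv\,|u|_{\vtx,1}$ and the triangle inequality completes the proof. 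The step I expect to be the main obstacle is the truncation-error bound: because $M(\hat t)$ depends on the pseudotime, the second $\hat t$-derivative of $M\uh$ contains the term $M_1\uh^{(1)}$ carrying only one factor of $\delta$, so the local truncation error is $O(\hv)$ rather than the $O(\hv^2)$ one would see with a constant mass matrix — this is precisely what caps the order of the lowest-order scheme at the stated $\hv$.
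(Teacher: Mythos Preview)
Your proof is correct and reaches the stated bound, but it proceeds along a genuinely different line from the paper's.

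The paper recasts $\Rimp_{h1}(\tau)$ as the resolvent $(I-\tau X_h)^{-1}$ with $X_h=M_0^{-1}(A_h+M_1)$, writes $\veps_h=\Rimp_{h1}[\uh_h^0-P_h\uh(0)]+\phi_h$, and then manipulates $\phi_h$ by pure operator algebra: inserting the Taylor expansion~\eqref{eq:u-TaylorExp}, using Lemma~\ref{lem:exact-time-deriv} to identify $\uh^{(1)}(0)=X\uh(0)$, and invoking the commutator identity $P_hX-X_hP_h=M_0^{-1}(P_hA-A_hP_h)$. Each resulting term is then bounded via Lemmas~\ref{lem:AXRbounds}, \ref{lem:APcomm}, \ref{lem:time-derivative-uhat}, and~\ref{lem:rho-Taylor-remainder}, with the stability Lemma~\ref{lem:stability-imp} applied only to the first piece. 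Your route is more ``PDE-style'': you derive a residual-form error equation $(M(\tau)-\tau A_h)e_h=M_0(\uh_h^0-P_h\uh(0))+g_\tau$, estimate the consistency defect $g_\tau$ directly (commutator via Lemma~\ref{lem:APcomm}, truncation via differentiating $A\uh=M\uh^{(1)}-M_1\uh$), and close with an energy argument that reproduces the mechanism of Lemma~\ref{lem:stability-imp} on the inhomogeneous equation. Your argument is arguably more elementary and self-contained for this lowest-order case; the paper's operator-algebraic setup, on the other hand, is what generalizes cleanly to the higher-order SAT analysis in \S\ref{ssec:arbitrary-order-sat}, where the operators $\Xh k$ and the same commutator pattern reappear.
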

\begin{proof}
  Let
  $X_h = M_0^{-1}(A_h + M_1)$ and $X = M_0^{-1} (A + M_1)$.
  By~\eqref{eq:MPcommute},  
  \begin{equation}
    \label{eq:18}
    P_h X - X_h P_h = M_0^{-1}(P_h A - A_h P_h).
  \end{equation}
  An alternate expression for the discrete propagator will also be
  useful: $\Rimp_{h1} = M^{-1}(I - \tau A_h M^{-1})^{-1} M_0 = (M-
  \tau A_h)^{-1} M_0 = (M_0 - \tau(A_h + M_1))^{-1} M_0,$ i.e.,
  \begin{equation}
    \label{eq:30}
    \Rimp_{h1} (\tau) = (I - \tau X_h)^{-1}.
  \end{equation}

  With these preparations, we derive an ``error equation'' for
  $\veps_h = \uhatimp_{h1}(\tau) - P_h \uh(\tau)$. 
  Note  that 
  $\veps_h$ is a function in $\Vhv$ for each $\tau$. Writing 
  \[
    \veps_h =  \Rimp_{h1} \big[ \uh_h^0 - P_h \uh(0)\big] + \phi_h,
  \]
  with $ \phi_h = \Rimp_{h1} P_h \uh(0) - P_h \uh(\tau),$ we analyze
  $\phi_h$ further as follows.
  \begin{align*}
    \phi_h
    & = (I - \tau X_h)^{-1} P_h \uh(0) - P_h \uh(\tau)
    && \text{by}~\eqref{eq:30},
    \\
    & = \big[(I - \tau X_h)^{-1} -I \big]P_h \uh(0)
      - \tau P_h \uh^{(1)}(0) - P_h \rho_2
      &&\text{by~\eqref{eq:u-TaylorExp},}
    \\
    & =
      \tau (I-\tau X_h)^{-1} X_h P_h \uh(0)
      - \tau P_h \uh^{(1)}(0) - P_h \rho_2
    \\
    & = \tau (I-\tau X_h)^{-1}
      \big[P_h X - M_0^{-1}(P_hA - A_h P_h)\big] \uh(0)
    \\
    &\quad       - \tau P_h \uh^{(1)}(0) - P_h \rho_2
    &&\text{by~\eqref{eq:18},}
    \\
    & = \tau^2
      X_h (I-\tau X_h)^{-1} P_h \uh^{(1)}(0)
      - \tau (I-\tau X_h)^{-1} M_0^{-1} \eta_h - P_h \rho_2
  \end{align*}
  with $\eta_h = (P_hA - A_h P_h)
  \uh(0)$. We have used Lemma~\ref{lem:exact-time-deriv} in the last
  step.

  To bound $\phi_h$, first note that by
  Lemma~\ref{lem:AXRbounds}, $\| X_h\|_{\vtx, h} \lesssim 1.$  Also,
  by \eqref{eq:30} and Lemma~\ref{lem:stability-imp},
  $\|\Rimp_{h1}(\tau)\|_{\vtx, h} = \| (I-\tau X_h)^{-1}\|_{\vtx, h}
  \lesssim 1$, so
  $\| \phi_h \|_\vtx
     \lesssim \tau^2 \| \uh^{(1)}(0) \|_\vtx 
      + \tau \|\eta_h\|_{\vtx} + \| \rho_2\|_\vtx.$
  Now, applying Lemmas~\ref{lem:time-derivative-uhat},
  \ref{lem:APcomm} and~\ref{lem:rho-Taylor-remainder}, 
  \begin{align*}
    \| \phi_h \|_\vtx
    & \lesssim \tau^2 \hv \|\d_t u \|_{\infty, \vtx}
      + \tau \hv | u |_{\vtx, 1}
      + \tau^2\hv^2 \| \d_t^2 u \|_{\infty, \vtx}.
  \end{align*}
  Together with the stability result of Lemma~\ref{lem:stability-imp},
  this proves
  \[
    \| \veps_h \|_M \lesssim \| \uh_h^0 - P_h \uh(0) \|_{M_0} +
    \hv
    \big(
    \| \d_t u\|_{\infty, \vtx}
    + \| \d_t^2 u\|_{\infty, \vtx}
    + | u |_{\vtx, 1}
    \big).
  \]
  Using the triangle inequality,
  $\| \uh(\tau) - \uhatimp_{h1}(\tau) \|_M \le \| \uh(\tau) - P_h \uh(\tau)
  \|_M + \| \veps_h \|_M,$ and the standard estimate for $L^2$
  projection,
  $\| \uh(\tau) - P_h \uh(\tau) \|_\vtx \lesssim \hv |u|_{\vtx, 1}$,
  the proof can now be completed.
\end{proof}

The previous two lemmas lead to a global convergence theorem, as we
shall now see.
The implicit scheme's tent propagator on $\Tv$ is the operator
$ \Rimp_{h1}(1) \circ P_h : \Lv \to \Vhv,$ set using $\Rimp_{h1}(\tau)$
evaluated at pseudotime $\tau=1$ corresponding to the tent top.
Letting $\RRhimp$ denote the collection of such tent propagators over
all tents, we use Definition~\ref{def:T2G} to set the global
propagator $\Rhimp^{i, j} = \TG(i, j, \RRhimp)$, and consider the
discrete solution $\uhimp = \Rhimp^{m, 0} u^0$ at the final time $T$.

\begin{theorem}[Error estimate for the lowest order tent-implicit
  scheme]
  \label{thm:lowestimplicit}
  Under the same conditions as Theorem~\ref{thm:semidiscrete}, for any
  spatial degree $p \ge 0$, the
  fully discrete solution $\uhimp$ satisfies
  \[
    \| u (T)  - \uhimp  \|_\om
    \;\lesssim\; 
    \bigg(\sum_{j=1}^m h_j\bigg)^{1/2}
    \bigg[
    \sum_{j=1}^m \sum_{\vtx \in V_j} \hv
          \big(
      \| u\|_{2, \infty, \vtx}
      + | u |_{\vtx, 1}
      \big)^2
    \bigg]^{1/2}.
  \]
\end{theorem}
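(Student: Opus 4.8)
The plan is to follow the proof of Theorem~\ref{thm:semidiscrete} almost verbatim, with the semidiscrete tent propagator $\Rsem_h(1)\circ P_h$ replaced by the tent-implicit propagator $\Rimp_{h1}(1)\circ P_h$, and with Lemmas~\ref{lem:stability-imp} and~\ref{lem:local-error-implicit} used in place of Lemmas~\ref{lem:stability_semidiscrete} and~\ref{lem:local_semidiscrete_error}. The first step is to establish the layer-stability bound that plays the role of Lemma~\ref{lem:Rh_bound}, namely $\|\Rhimp^{i,j}w\|_{C_i}\le\|w\|_{C_j}$ for $i>j$ and $w\in L^2(C_j)^L$. Fixing a tent $\Tv$ with $\vtx\in V_i$, Lemma~\ref{lem:stability-imp} with $\tau=1$ applied to $P_h\hat w$ gives $\|\hat r_h\|_{M(1)}^2\le\|P_h\hat w\|_{M_0}^2$, where $\hat r_h=(\Rhimp^{i,i-1}w)\circ\vPhi$ and $\hat w=w\circ\vPhi$; then, exactly as in the proof of Lemma~\ref{lem:Rh_bound}, the commutativity $M_0P_h=P_hM_0$ from~\eqref{eq:MPcommute} yields $\|P_h\hat w\|_{M_0}^2=(M_0P_h\hat w,\hat w)_\vtx\le\|P_h\hat w\|_{M_0}\|\hat w\|_{M_0}$, so $\|\hat r_h\|_{M(1)}\le\|\hat w\|_{M_0}$. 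Since $\|\hat r_h\|_{M(1)}=\|\Rhimp^{i,i-1}w\|_{C_i,\vtx}$ and $\|\hat w\|_{M_0}=\|w\|_{C_{i-1},\vtx}$, summing over the disjoint patches $\vtx\in V_i$ gives $\|\Rhimp^{i,i-1}w\|_{C_i}\le\|w\|_{C_{i-1}}$, and iterating over the intervening layers proves the claim.

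Next, the error-propagation identity of Lemma~\ref{lem:errorpropagation} applies verbatim (its proof uses only the composition structure of the propagators), so with $\Ehimp^{i,j}=R^{i,j}-\Rhimp^{i,j}$ one has $\Ehimp^{m,0}=\sum_{j=1}^m\Rhimp^{m,j}\,\Ehimp^{j,j-1}\,R^{j-1,0}$. Writing $u_j=u|_{C_j}$, so that $R^{j-1,0}u^0=u_{j-1}$ by~\eqref{eq:16}, and using~\eqref{eq:14} together with the layer-stability bound just established, I would obtain $\|u(T)-\uhimp\|_\om\lesssim\|\Ehimp^{m,0}u^0\|_{C_m}\le\sum_{j=1}^m\|\Ehimp^{j,j-1}u_{j-1}\|_{C_j}$, and split the front norm over the non-overlapping vertex patches: $\|\Ehimp^{j,j-1}u_{j-1}\|_{C_j}^2=\sum_{\vtx\in V_j}\|\Ehimp^{j,j-1}u_{j-1}\|_{C_j,\vtx}^2$. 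On a tent $\Tv$ with $\vtx\in V_j$, the restriction of $\Ehimp^{j,j-1}u_{j-1}$ to the tent top is $u$ minus the tent-implicit propagator applied to $u$ at the tent bottom, so with $\uh=u|_\Tv\circ\vPhi$ and $\uhatimp_{h1}(1)=\Rimp_{h1}(1)P_h\uh(0)$ we have $\|\Ehimp^{j,j-1}u_{j-1}\|_{C_j,\vtx}=\|\uh(1)-\uhatimp_{h1}(1)\|_{M(1)}$. Applying Lemma~\ref{lem:local-error-implicit} with $\tau=1$ and $\uh_h^0=P_h\uh(0)$, and using the standard projection estimate $\|\uh(0)-P_h\uh(0)\|_{M_0}\lesssim\|\uh(0)-P_h\uh(0)\|_\vtx\lesssim\hv|u|_{\vtx,1}$ (the first inequality by boundedness of $M_0$ from Lemma~\ref{lem:Mspd}), gives $\|\Ehimp^{j,j-1}u_{j-1}\|_{C_j,\vtx}\lesssim\hv(\|u\|_{2,\infty,\vtx}+|u|_{\vtx,1})$. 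Substituting this, then using $\hv\le h_j$ for $\vtx\in V_j$ to replace one factor of $\hv$ by $h_j$, and finishing with the Cauchy--Schwarz inequality over $j$, yields the stated estimate.

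I do not expect a genuine obstacle here, because the two hard ingredients --- unconditional strong stability in the pseudotime-dependent $M(\tau)$-norm and the $O(\hv)$ local error of the lowest order implicit Euler step --- are already packaged into Lemmas~\ref{lem:stability-imp} and~\ref{lem:local-error-implicit}. The one point that requires care is the passage from the tent-local stability of Lemma~\ref{lem:stability-imp} to the global bound $\|\Rhimp^{i,j}w\|_{C_i}\le\|w\|_{C_j}$, where the leading $P_h$ in the tent propagator must be absorbed using the commutativity~\eqref{eq:MPcommute}; since the stability constant there is exactly $1$ rather than $1+\Cstab h$, Remark~\ref{rem:weaker_stab} is not needed and no factor $e^{\Cstab T}$ appears in the final bound.
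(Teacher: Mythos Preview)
Your proposal is correct and follows essentially the same approach as the paper: you replace the semidiscrete propagator by $\Rimp_{h1}(1)\circ P_h$, re-derive the analogue of Lemma~\ref{lem:Rh_bound} from Lemma~\ref{lem:stability-imp} (absorbing $P_h$ via~\eqref{eq:MPcommute}), carry over Lemma~\ref{lem:errorpropagation} verbatim, and then bound the local layer errors using Lemma~\ref{lem:local-error-implicit} together with the projection estimate, finishing with Cauchy--Schwarz exactly as in Theorem~\ref{thm:semidiscrete}. Your remark that Remark~\ref{rem:weaker_stab} is not needed here (since the stability constant is $1$) is also correct.
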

\begin{proof}
  First, due to Lemma~\ref{lem:stability-imp}, we observe that in
  complete analogy with 
  \\
  Lemma~\ref{lem:Rh_bound}, one
  can prove that for $i> j$,
  \[
    \| \Rhimp^{i, j} w\|_{C_i} \le \| w \|_{C_j}.
  \]
  Defining
  $
    \Ehimp^{i, j} = R^{i, j} - \Rhimp^{i, j},
  $
  in analogy with Lemma~\ref{lem:errorpropagation}, we can show that
  \[
    \Ehimp^{m, 0} =
        \sum_{j=1}^m \Rhimp^{m, j} \Ehimp^{j, j-1} R^{j-1, 0}.
  \]
  Hence the theorem can be proved along the same lines as the proof of
  Theorem~\ref{thm:semidiscrete}, using
  Lemma~\ref{lem:local-error-implicit} in place of
  Lemma~\ref{lem:local_semidiscrete_error}.
\end{proof}

As before, under the further assumption that~\eqref{eq:32} holds,
Theorem~\ref{thm:lowestimplicit} gives an $O(h^{1/2})$ rate of
convergence for the solution at the final time.

\subsection{Lowest order explicit scheme}
\label{ssec:lowest-order-expl}

A perhaps nonstandard route to derive an explicit scheme is to view it
as an iterative method for solving the equations of an implicit
scheme. Pursuing this approach using the tent-implicit scheme of
\S\ref{ssec:lowest-order-tent-implicit}, we write 
$v_\infty = \Rimp_{h1} (\tau) v_0$, or equivalently,
using
the operator $X_h = M_0^{-1}(A_h + M_1)$ in~\eqref{eq:30},
\[
  (I - \tau X_h) v_\infty = v_0.
\]
Hence the Richardson iteration for solving this linear system for
$v_\infty$ takes the form
\begin{equation}
  \label{eq:42}
  v_{\ell +1} = v_\ell + \left(v_0 - (I - \tau X_h) v_\ell\right),
  \qquad \ell =0, 1, \ldots.   
\end{equation}

\begin{definition}[Lowest order explicit discrete flows:
  $\Rexp_{h1}(\tau)$ and $\Rexp_{h1q}(\tau)$]
  \label{def:lowest-order-explicit}
  Let $v_0 \in \Vhv$.  The result $v_1$ after one iteration
  of~\eqref{eq:42} defines the operator
  $\Rexp_{h1} (\tau): \Vhv \to \Vhv$:
  \[
    \Rexp_{h1} (\tau) v_0 = v_1 =  (I+ \tau X_h) v_0.
  \]
  The result $v_q$ obtained after
  performing   $q\ge 1$ iterations  defines
  $\Rexp_{h1q} (\tau): \Vhv \to \Vhv$ by 
  \[
    \Rexp_{h1q} (\tau)
    v_0 = v_q =  v_0 + \tau X_h v_{q-1}.
  \]
  Note that no matrix inversions are required for conducting these $q$
  iterations, except for one local mass matrix inversion ($M_0^{-1}$)
  per tent.
\end{definition}

Unlike the tent-implicit scheme, we are now able to obtain stability
for the explicit scheme 
only under further conditions. From Lemma~\ref{lem:AXRbounds}, we know
that $\| X_h \|_{M_0} \lesssim 1$. Hence the condition~\eqref{eq:Xhcond} in
the next result can be met by performing sufficiently many iterations.

\begin{lemma}[Conditional stability]
  \label{lem:lowest-order-explicit-q-stability}
  If $q$ is large enough to admit
  \begin{equation}
    \label{eq:Xhcond}
    \| X_h \|_{M_0} \lesssim
    \hv^{1/(q+1)},
  \end{equation}
  then there is a $c_q>0$ independent of $\hv$ such
  that for all $v_0 \in \Vhv$, 
  \[
    \|  \Rexp_{h1q} (\tau) v_0 \|_{M(\tau)} \le (1 + c_q \hv) \| v_0 \|_{M_0}.
  \]
\end{lemma}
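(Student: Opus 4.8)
\emph{Proof proposal.}
The plan is to reduce the stability of the iterated explicit flow to the already-established strong stability of the implicit flow $\Rimp_{h1}(\tau) = (I - \tau X_h)^{-1}$ (cf.\ \eqref{eq:30} and Lemma~\ref{lem:stability-imp}). First I would unroll the recursion of Definition~\ref{def:lowest-order-explicit}: since $v_q = v_0 + \tau X_h v_{q-1}$ starting from $v_0$, an immediate induction gives the closed form
\[
  \Rexp_{h1q}(\tau) = \sum_{k=0}^{q} (\tau X_h)^k .
\]
Multiplying on the left by $I - \tau X_h$ telescopes, so $\bigl(I - \tau X_h\bigr)\sum_{k=0}^{q}(\tau X_h)^k = I - (\tau X_h)^{q+1}$, and hence, using \eqref{eq:30},
\[
  \Rexp_{h1q}(\tau) = \bigl(I - (\tau X_h)^{q+1}\bigr)\,\Rimp_{h1}(\tau).
\]
This identity is the crux: it exhibits the explicit flow as the implicit flow perturbed by the high power $(\tau X_h)^{q+1}$, and it is the step where the choice of $q$ in \eqref{eq:Xhcond} pays off.

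Next, fix $v_0 \in \Vhv$ and set $w = \Rimp_{h1}(\tau) v_0 \in \Vhv$. By the triangle inequality in the $M(\tau)$-norm,
\[
  \|\Rexp_{h1q}(\tau) v_0\|_{M(\tau)}
  \le \|w\|_{M(\tau)} + \bigl\|(\tau X_h)^{q+1} w\bigr\|_{M(\tau)} .
\]
The first term is bounded by $\|v_0\|_{M_0}$ directly by Lemma~\ref{lem:stability-imp}. For the second term I would pass to the $M_0$-norm: Lemma~\ref{lem:Mspd} shows that $\|\cdot\|_{M(\tau)}$ and $\|\cdot\|_{M_0}$ are equivalent norms on $\Lv$ (hence on $\Vhv$) with mesh-independent equivalence constants, so $\|(\tau X_h)^{q+1} w\|_{M(\tau)} \lesssim \|(\tau X_h)^{q+1} w\|_{M_0}$; submultiplicativity of the operator norm induced by $\|\cdot\|_{M_0}$, together with $\tau \le 1$, then gives $\|(\tau X_h)^{q+1} w\|_{M_0} \le \|X_h\|_{M_0}^{q+1}\,\|w\|_{M_0}$. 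A second use of Lemma~\ref{lem:stability-imp} and the norm equivalence yields $\|w\|_{M_0} \lesssim \|w\|_{M(\tau)} \le \|v_0\|_{M_0}$. Finally, raising the hypothesis \eqref{eq:Xhcond} to the power $q+1$ gives $\|X_h\|_{M_0}^{q+1} \lesssim \hv$, and collecting the pieces produces
\[
  \|\Rexp_{h1q}(\tau) v_0\|_{M(\tau)} \le \|v_0\|_{M_0} + c_q \hv \|v_0\|_{M_0},
\]
with $c_q$ gathering the finitely many $\hv$-independent (but $q$-dependent) constants coming from the norm equivalence and from \eqref{eq:Xhcond} raised to the $(q+1)$st power; this is the claimed estimate.

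The main obstacle is more a matter of bookkeeping than of genuine difficulty: one must get the exponents to match, so that \eqref{eq:Xhcond} turns $\|X_h\|_{M_0}^{q+1}$ into a \emph{single} power of $\hv$ — this is exactly why the closed form is summed up to $k=q$ (i.e.\ $q$ Richardson iterations) and why the scheme is defined via $v_q = v_0 + \tau X_h v_{q-1}$. One must also be mindful that Lemma~\ref{lem:stability-imp} is an $M(\tau)$-to-$M_0$ bound, so each invocation must be paired with the norm equivalence of Lemma~\ref{lem:Mspd} whenever the two sides need to live in a common norm; all constants introduced in this way are mesh independent and are safely absorbed into $c_q$.
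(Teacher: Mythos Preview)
Your proof is correct and follows essentially the same approach as the paper: unroll the recursion to a partial geometric sum, factor through $\Rimp_{h1}(\tau)=(I-\tau X_h)^{-1}$, invoke Lemma~\ref{lem:stability-imp}, and use~\eqref{eq:Xhcond} to turn $\|X_h\|_{M_0}^{q+1}$ into $\hv$. The only difference is the order in which you factor: you write $\Rexp_{h1q}(\tau)=\bigl(I-(\tau X_h)^{q+1}\bigr)\Rimp_{h1}(\tau)$, whereas the paper writes $\Rexp_{h1q}(\tau)=\Rimp_{h1}(\tau)\bigl(I-(\tau X_h)^{q+1}\bigr)$ (the two commute, so both are valid). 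The paper's order is slightly cleaner, because Lemma~\ref{lem:stability-imp} then gives $\|v_q\|_{M(\tau)}\le\|v_0-(\tau X_h)^{q+1}v_0\|_{M_0}$ directly, and the triangle inequality and the $\|X_h\|_{M_0}$ bound all live in the $M_0$-norm---no appeal to the $M(\tau)\leftrightarrow M_0$ norm equivalence is needed, and no extra constants enter $c_q$ from that source.
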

\begin{proof}
  Recursively expanding $v_q = v_0 + \tau X_h v_{q-1}$, we obtain
  \[
    v_q = \sum_{j=0}^q (\tau X_h)^j v_0.
  \]
  Rewriting this, using (\ref{eq:30}), as
  \begin{equation}
    \label{eq:43}
    v_q = (I - \tau X_h)^{-1} \big[I - (\tau X_h)^{q+1}\big] v_0 =
    \Rimp_{h1} \big[1 - (\tau X_h)^{q+1}\big] v_0,     
  \end{equation}
  we apply 
  Lemma~\ref{lem:stability-imp}. Hence
  \begin{align*}
    \| v_q \|_M
    & \le
      \left\| v_0 - (\tau X_h)^{q+1} v_0 \right\|_{M_0}
      \le \| v_0 \|_{M_0} + \| X_h \|_{M_0}^{q+1} \| v_0\|_{M_0}
  \end{align*}
  and the result follows using~\eqref{eq:Xhcond}.
\end{proof}

\begin{lemma}[Local error bound]
  \label{lem:local-error-explicit}
  Let $\uh$ denote the exact solution on $\Tvh$, let
  $\uhatexp_{h1q}(\tau) = \Rexp_{h1q}(\tau) \uh_h^0$ for some
  $\uh_h^0 \in \Vhv$, and suppose~\eqref{eq:Xhcond} holds.
  Then, 
  \begin{align*}
    \| \uh (\tau) - \uhatexp_{h1q} (\tau) \|_{M(\tau)}
    & \;\lesssim\;
      \| \uh(0) - \uh_h^0 
      \|_{M(0)}
      +
      \hv
      \big(
      \|  u\|_{2, \infty, \vtx}
      + \| u \|_{\vtx, 1}
      \big).
  \end{align*}
\end{lemma}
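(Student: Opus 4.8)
The plan is to compare the $q$-step explicit flow with the tent-implicit flow, for which a local error bound is already available. Summing the recursion $v_\ell = v_0 + \tau X_h v_{\ell-1}$ (equivalently, reading off~\eqref{eq:43} together with~\eqref{eq:30}) gives the operator identity $\Rexp_{h1q}(\tau) = \sum_{j=0}^{q}(\tau X_h)^{j} = \Rimp_{h1}(\tau)\bigl[I - (\tau X_h)^{q+1}\bigr]$. Writing $\uhatexp_{h1q}(\tau) = \Rexp_{h1q}(\tau)\uh_h^0$ and $\uhatimp_{h1}(\tau) = \Rimp_{h1}(\tau)\uh_h^0$ with the \emph{same} initial datum $\uh_h^0$, linearity then yields the splitting
\[
  \uh(\tau) - \uhatexp_{h1q}(\tau)
  = \bigl(\uh(\tau) - \uhatimp_{h1}(\tau)\bigr)
  + \Rimp_{h1}(\tau)(\tau X_h)^{q+1}\uh_h^0,
\]
and I would bound the two terms on the right separately in the $M(\tau)$-norm.

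The first term is controlled by the already-established local error bound for the tent-implicit scheme, Lemma~\ref{lem:local-error-implicit}, which applies to an arbitrary $\uh_h^0 \in \Vhv$ and produces $\|\uh(0) - \uh_h^0\|_{M(0)} + \hv\bigl(\|u\|_{2,\infty,\vtx} + |u|_{\vtx,1}\bigr)$; since $|u|_{\vtx,1} \le \|u\|_{\vtx,1}$ this already has the asserted form. The second term is the genuinely new contribution, and the key point is that the standing assumption~\eqref{eq:Xhcond} is calibrated precisely so that it is $O(\hv)$: using the unconditional stability of $\Rimp_{h1}$ (Lemma~\ref{lem:stability-imp}), submultiplicativity of the operator norm $\|\cdot\|_{M_0}$, and $\tau \le 1$,
\[
  \bigl\|\Rimp_{h1}(\tau)(\tau X_h)^{q+1}\uh_h^0\bigr\|_{M(\tau)}
  \le \|X_h\|_{M_0}^{q+1}\,\|\uh_h^0\|_{M_0}
  \lesssim \hv\,\|\uh_h^0\|_{M_0}
\]
by~\eqref{eq:Xhcond}. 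It then remains to absorb $\|\uh_h^0\|_{M_0}$: the triangle inequality gives $\|\uh_h^0\|_{M_0} \le \|\uh_h^0 - \uh(0)\|_{M_0} + \|\uh(0)\|_{M_0}$, and Lemma~\ref{lem:Mspd} (equivalence of $\|\cdot\|_{M_0}$ with $\|\cdot\|_\vtx$) together with the definition of $\|\cdot\|_{\infty,\vtx}$ in~\eqref{eq:seminorms2} gives $\|\uh(0)\|_{M_0} \lesssim \|\uh(0)\|_\vtx \le \|u\|_{\infty,\vtx} \le \|u\|_{\vtx,1}$. Since $\hv \lesssim 1$, the term $\hv\|\uh_h^0 - \uh(0)\|_{M_0}$ is subsumed into $\|\uh(0) - \uh_h^0\|_{M(0)}$, and combining the two estimates finishes the proof. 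This is also the step where the slightly stronger full norm $\|u\|_{\vtx,1}$, rather than the seminorm $|u|_{\vtx,1}$ appearing in the implicit case, is needed: the extra term involves $\uh_h^0$, hence $\uh(0)$, not merely its time derivatives.

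I do not expect a serious obstacle here. Once the identity relating the explicit and implicit flows is in hand, the whole argument reduces to the observation that the two flows differ by $\Rimp_{h1}(\tau)(\tau X_h)^{q+1}$, whose norm is governed by the $(q+1)$st power of $\|X_h\|_{M_0}$ — and condition~\eqref{eq:Xhcond} was introduced for exactly this reason. The only mildly delicate point is the bookkeeping that rewrites $\|\uh_h^0\|_{M_0}$ as the sum of the initial-error term and a solution norm, so that the gained factor $\hv$ ends up multiplying a quantity controlled by the exact solution rather than the (possibly large) discrete datum.
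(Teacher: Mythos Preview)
Your proposal is correct and follows essentially the same route as the paper: split via $\uh(\tau) - \uhatexp_{h1q}(\tau) = (\uh(\tau) - \uhatimp_{h1}(\tau)) + \Rimp_{h1}(\tau)(\tau X_h)^{q+1}\uh_h^0$, invoke Lemma~\ref{lem:local-error-implicit} on the first piece, and bound the second using Lemma~\ref{lem:stability-imp} together with~\eqref{eq:Xhcond}, then absorb $\|\uh_h^0\|_{M_0}$ by writing $\uh_h^0 = (\uh_h^0 - \uh(0)) + \uh(0)$. Your remark explaining why the full norm $\|u\|_{\vtx,1}$ (rather than the seminorm $|u|_{\vtx,1}$) appears here is a nice clarification that the paper leaves implicit.
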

\begin{proof}
  Let $e_h = \uhatimp_{h1} (\tau) - \uhatexp_{h1q}(\tau)$. Since
  $\uh (\tau) - \uhatexp_{h1q} (\tau) -e_h = \uh (\tau) - \uhatimp_{h1}
  (\tau)$ can be bounded by Lemma~\ref{lem:local-error-implicit}, it
  suffices to bound $e_h$. By~\eqref{eq:43},
  \begin{align*}
    e_h
    & = (I - \tau X_h)^{-1} \uh_h^0 - 
      (I - \tau X_h)^{-1}\big[ I - (\tau X_h)^{q+1}] \uh_h^0
    \\
    & = (I - \tau X_h)^{-1} (\tau X_h)^{q+1} \uh_h^0.
  \end{align*}
  Thus, by Lemma~\ref{lem:stability-imp} and~\eqref{eq:Xhcond},
  $\| e_h \|_M \lesssim \hv \| \uh_h^0\|_{M_0}$. We may further write
  $ \uh_h^0 $ as the sum of $\uh_h^0 - \uh(0)$ and
  $ \uh(0)$ and apply triangle inequality to obtain the right
  hand side of the stated bound.
\end{proof}

Letting $\RRexp_{h1q}$ denote the collection of explicit tent
propagator operators $\Rexp_{h1q}(1) \circ P_h : \Lv \to \Vhv$ on all
tents, we use Definition~\ref{def:T2G} to set the global propagators
$\TG(i, j, \RRexp_{h1q})$, and consider the discrete solution
$\uexp_{h1q} = \TG(m, 0, \RRexp_{h1q}) u^0$ at the final time $T$.

\begin{theorem}[Error estimate for iterated lowest order
  explicit scheme]
  \label{thm:lowest-explicit-q}  
  Suppose~\eqref{eq:Xhcond}, \eqref{eq:32}, and the conditions of
  Theorem~\ref{thm:semidiscrete} hold. Then for any spatial degree
  $p \ge 0$, the fully discrete explicit solution $\uexp_{h1q}$
  satisfies
  \[
    \| u (T)  - \uexp_{h1q} \|_\om^2
    \;\lesssim\;
    \sum_{j=1}^m \sum_{\vtx \in V_j} \hv
          \big(
      \|  u\|_{2, \infty, \vtx}
      + \| u \|_{\vtx, 1}
      \big)^2.
  \]
\end{theorem}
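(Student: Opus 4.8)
The plan is to mirror the proofs of Theorem~\ref{thm:semidiscrete} and Theorem~\ref{thm:lowestimplicit}, the only structural change being that Lemma~\ref{lem:lowest-order-explicit-q-stability} supplies the \emph{weakened} stability bound carrying the factor $1 + c_q\hv$ rather than a strict contraction; this is exactly the scenario anticipated in Remark~\ref{rem:weaker_stab}. First I would pass from the per-tent bound of Lemma~\ref{lem:lowest-order-explicit-q-stability} to a per-layer bound. Collecting the explicit tent propagators $\Rexp_{h1q}(1)\circ P_h$ into $\RRexp_{h1q}$ and writing $\Rt_h^{i,i-1}$ for the resulting layer propagator, I would repeat the argument of Lemma~\ref{lem:Rh_bound} verbatim: on each tent $\Tv$ with $\vtx\in V_i$, apply Lemma~\ref{lem:lowest-order-explicit-q-stability} to $(\Rt_h^{i,i-1}w)\circ\vPhi$, use $\|P_h\hat w\|_{M_0}\le\|\hat w\|_{M_0}$ (which follows from~\eqref{eq:G_L_const} and the $M_0$-selfadjointness computation already carried out in the proof of Lemma~\ref{lem:Rh_bound}), and sum over the non-overlapping vertex patches $\ov$, $\vtx\in V_i$. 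Because~\eqref{eq:Xhcond} is assumed with a single implied constant, the constant $c_q$ is mesh- and layer-independent, so this produces precisely the hypothesis~\eqref{eq:44} of Remark~\ref{rem:weaker_stab} with $\Cstab=c_q$.

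With that in hand, Remark~\ref{rem:weaker_stab} applies directly: invoking assumption~\eqref{eq:32}, it gives the layer-uniform stability $\|\Rt_h^{i,j}w\|_{C_i}\le e^{c_q T}\|w\|_{C_j}$ for all $i>j$. The error-propagation identity $\Et^{m,0}=\sum_{j=1}^m\Rt_h^{m,j}\Et^{j,j-1}R^{j-1,0}$, with $\Et^{i,j}=R^{i,j}-\Rt_h^{i,j}$, holds verbatim as in Lemma~\ref{lem:errorpropagation}, since that proof used only composition of the propagators, not their stability. I would then run the computation in the proof of Theorem~\ref{thm:semidiscrete}, replacing $E_h$ by $\Et$ and replacing ``$\le$'' in~\eqref{eq:15} by ``$\lesssim$'', absorbing $e^{c_q T}$ into the implied constant.

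For the local contribution, on a tent $\Tv$ with $\vtx\in V_j$ one has $\Et^{j,j-1}u_{j-1}|_{\dtop\Tv}=\uh(1)-\uhatexp_{h1q}(1)$ with $\uhatexp_{h1q}(1)=\Rexp_{h1q}(1)\circ P_h\uh(0)$, so Lemma~\ref{lem:local-error-explicit} (with $\tau=1$ and $\uh_h^0=P_h\uh(0)$), together with the standard estimate $\|\uh(0)-P_h\uh(0)\|_{M_0}\lesssim\hv|u|_{\vtx,1}$, gives $\|\Et^{j,j-1}u_{j-1}\|_{C_j,\vtx}\lesssim\hv(\|u\|_{2,\infty,\vtx}+\|u\|_{\vtx,1})$. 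Squaring, summing over $\vtx\in V_j$ (which extracts a factor $h_j=\max_{\vtx\in V_j}\hv$), summing over $j$, and applying Cauchy--Schwarz followed by~\eqref{eq:32} (which bounds $(\sum_j h_j)^{1/2}$ by a constant) yields the stated squared estimate. The only point needing care is that the stability constant is now of the multiplicative form $1+c_q\hv$ rather than $1$, so Lemma~\ref{lem:Rh_bound} cannot be quoted directly; but this is fully handled by Remark~\ref{rem:weaker_stab} once the per-layer bound~\eqref{eq:44} has been established and $c_q$ has been checked to be uniform over all tents.
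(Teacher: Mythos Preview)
Your proposal is correct and follows essentially the same approach as the paper: mirror the proof of Theorem~\ref{thm:semidiscrete}, replacing Lemmas~\ref{lem:stability_semidiscrete} and~\ref{lem:local_semidiscrete_error} by Lemmas~\ref{lem:lowest-order-explicit-q-stability} and~\ref{lem:local-error-explicit}, and handle the weakened $(1+c_q\hv)$ stability via Remark~\ref{rem:weaker_stab}. The paper's proof is a brief sketch stating exactly this; you have simply spelled out the details.
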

\begin{proof}
  The proof proceeds along the lines of the proof of
  Theorem~\ref{thm:semidiscrete}, replacing the applications of
  Lemmas~\ref{lem:stability_semidiscrete}
  and~\ref{lem:local_semidiscrete_error}, respectively, by those of
  Lemmas~\ref{lem:lowest-order-explicit-q-stability}
  and~\ref{lem:local-error-explicit} instead. The main difference is that
  we must now invoke the argument of Remark~\ref{rem:weaker_stab} due
  to the weaker stability estimate of
  Lemma~\ref{lem:lowest-order-explicit-q-stability}.
\end{proof}

The $O(h^{1/2})$ rate of convergence given by
Theorem~\ref{thm:lowest-explicit-q} is the same as the rate given by
Theorem~\ref{thm:lowestimplicit} for the lowest order tent-implicit
scheme. Increasing the iteration number~$q$ can improve stability but
does not generally improve the order of convergence.

\subsection{Arbitrary order  SAT schemes}
\label{ssec:arbitrary-order-sat}

Letting $\Xh 0$ denote the identity operator on $\Vhv$, recursively
define further operators on $\Vhv$ by 
\begin{equation}
  \label{eq:Xhrecurse}
  \Xh k
  = M_0^{-1} (A_h + k M_1) \Xh {k-1}, \quad k \ge 1.    
\end{equation}
Similarly, let $\X 0 = 1$ and $\X k = M_0^{-1} (A + k M_1) \X {k-1}$
for $ k \ge 1.$ By
Lemma~\ref{lem:exact-time-deriv}, the time derivative of the exact
solution satisfies $\uh^{(k)}(0) = \X k \uh(0)$. Hence the
expansion~\eqref{eq:u-TaylorExp} may be written as
\begin{equation}
  \label{eq:17}
  \uh(\tau) = \sum_{k=0}^{s} \frac{\tau^k}{k!} \X k \uh(0)
  + \rho_{s+1}(\tau),  
\end{equation}
This motivates us to define the SAT flow by replacing $\X k$ with
the discrete operator $\Xh k$
as follows. (A gentler derivation can be found
in~\cite{GopalHochsSchob20} and it can be seen easily that the
discrete flow defined there coincides with the one in the next
definition.)

\begin{definition}[Discrete $s$-stage SAT flow: $\Rsat_{hs}(\tau)$ for
  $s\ge 1$]
  \label{def:sSATflow-Xh}
  Define $\Rsat_{hs}(\tau) : \Vhv \to  \Vhv$ by
  \[
    \Rsat_{hs}(\tau) v
    =
    \sum_{k=0}^{s-1} \frac{\tau^k}{k!} \Xh k   v
    + \frac{\tau^s}{s!} M(\tau)^{-1} M_0 \Xh s v, \qquad v \in \Vhv.
  \]
\end{definition}

\begin{lemma}
  \label{lem:Rsat-ee-0}
  Let $u$ be the exact solution of~\eqref{eq:tenteqforu} on a causal
  tent $\Tv$ and $\uh = u \circ \vPhi \in C^{s+1}(0, 1, \Hv \cap
  H^{p+1}(\ovh)^L)$.
  Then for any $s\ge 1$, 
  \begin{align*}
    \| \Rsat_{hs}(\tau) \uh_h^0 - P_h \uh(\tau)\|_{M(\tau)}
    & \lesssim
      \| \uh_h^0 - P_h\uh (0)   \|_{M(0)}
      +
      \tau^s \hv^s \| u\|_{s+1, \infty, \vtx}
      \\
     &+
      \tau \hv^{p+1}| u |_{\vtx, p+1, s-1}.
  \end{align*}
\end{lemma}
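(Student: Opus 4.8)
The plan is to write the error as a stability term plus a consistency term,
\[
  \Rsat_{hs}(\tau) \uh_h^0 - P_h \uh(\tau)
  = \Rsat_{hs}(\tau)\big[\uh_h^0 - P_h\uh(0)\big]
  + \big[\Rsat_{hs}(\tau)P_h\uh(0) - P_h\uh(\tau)\big],
\]
and to bound each separately. For the first term I would first record that $\Rsat_{hs}(\tau)$ is a mesh-uniformly bounded operator on $\Vhv$: by Lemma~\ref{lem:AXRbounds} each $\Xh k$ is a composition of $M_0^{-1}$, $A_h$ and $M_1$, all of $\|\cdot\|_{\vtx,h}$-norm $\lesssim 1$, so $\|\Xh k\|_{\vtx,h}\lesssim 1$, while $\|M(\tau)^{-1}M_0\|_{\vtx,h}\lesssim 1$ by Lemma~\ref{lem:Mspd}. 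Since $0\le\tau\le 1$ and the sum in Definition~\ref{def:sSATflow-Xh} has finitely many (mesh-independent, $s$-dependent) terms, $\|\Rsat_{hs}(\tau)v\|_\vtx\lesssim\|v\|_\vtx$ for all $v\in\Vhv$. Combined with the equivalence of $\|\cdot\|_\vtx$, $\|\cdot\|_{M(0)}$ and $\|\cdot\|_{M(\tau)}$ from Lemma~\ref{lem:Mspd}, the first term is $\lesssim\|\uh_h^0 - P_h\uh(0)\|_{M(0)}$, which is the first term on the right of the claimed estimate.

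For the consistency term I would substitute the Taylor expansion in the form~\eqref{eq:17}, i.e., $P_h\uh(\tau)=\sum_{k=0}^{s}\frac{\tau^k}{k!}P_h\X k\uh(0)+P_h\rho_{s+1}(\tau)$, and compare it term by term with $\Rsat_{hs}(\tau)P_h\uh(0)$. Setting $\theta_h^{(k)}:=(\Xh k P_h-P_h\X k)\uh(0)$ and noting $\theta_h^{(0)}=0$, this yields
\[
  \Rsat_{hs}(\tau)P_h\uh(0) - P_h\uh(\tau)
  = \sum_{k=1}^{s-1}\frac{\tau^k}{k!}\,\theta_h^{(k)}
  + \frac{\tau^s}{s!}\big[M(\tau)^{-1}M_0\Xh s P_h - P_h\X s\big]\uh(0)
  - P_h\rho_{s+1}(\tau).
\]
The heart of the argument is the commutator bound $\|\theta_h^{(k)}\|_\vtx\lesssim\hv^{p+1}|u|_{\vtx,p+1,s-1}$ for $0\le k\le s$, which I would prove by induction on $k$. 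The base case $k=0$ is immediate. For the step, combine the recursions~\eqref{eq:Xhrecurse} for $\Xh k$ and $\X k$ with the commutativity $M_0P_h=P_hM_0$, $M_1P_h=P_hM_1$ of~\eqref{eq:MPcommute}, and substitute $\Xh{k-1}P_h\uh(0)=P_h\X{k-1}\uh(0)+\theta_h^{(k-1)}$, to obtain
\[
  \theta_h^{(k)}
  = M_0^{-1}\Big[(A_hP_h - P_hA)\big(\X{k-1}\uh(0)\big)
  + (A_h + kM_1)\,\theta_h^{(k-1)}\Big].
\]
Here $\X{k-1}\uh(0)=\uh^{(k-1)}(0)$ by Lemma~\ref{lem:exact-time-deriv}, and by Lemma~\ref{lem:time-derivative-uhat} this lies in $H^{p+1}(\ovh)^L$ with $|\uh^{(k-1)}(0)|_{H^{p+1}(\ovh)^L}\le|u|_{\vtx,p+1,s-1}$ whenever $k-1\le s-1$; so Lemma~\ref{lem:APcomm} (with $l=p+1$) bounds the first summand by $\lesssim\hv^{p+1}|u|_{\vtx,p+1,s-1}$, while boundedness of $M_0^{-1}$, $A_h$, $M_1$ (Lemma~\ref{lem:AXRbounds}) and the inductive hypothesis bound the second by $\lesssim\|\theta_h^{(k-1)}\|_\vtx$, closing the induction.

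It remains to treat the stage-$s$ term and the Taylor remainder. Writing $\Xh sP_h\uh(0)=P_h\X s\uh(0)+\theta_h^{(s)}$ and $M(\tau)^{-1}M_0-I=\tau M(\tau)^{-1}M_1$ (since $M_0-M(\tau)=\tau M_1$) gives
\[
  \big[M(\tau)^{-1}M_0\Xh s P_h - P_h\X s\big]\uh(0)
  = \tau\,M(\tau)^{-1}M_1P_h\X s\uh(0) + M(\tau)^{-1}M_0\,\theta_h^{(s)},
\]
whose $\|\cdot\|_\vtx$-norm is $\lesssim\tau\|\uh^{(s)}(0)\|_\vtx+\|\theta_h^{(s)}\|_\vtx\lesssim\tau\hv^s\|\d_t^s u\|_{\infty,\vtx}+\hv^{p+1}|u|_{\vtx,p+1,s-1}$ by Lemma~\ref{lem:time-derivative-uhat} and the commutator bound; the remainder satisfies $\|\rho_{s+1}(\tau)\|_\vtx\lesssim\tau^{s+1}\hv^{s+1}\|\d_t^{s+1}u\|_{\infty,\vtx}$ by Lemma~\ref{lem:rho-Taylor-remainder}. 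Collecting all pieces and using $\tau^k\le\tau$ for $k\ge 1$, $\tau^{s+1}\le\tau^s$, $\hv^{s+1}\lesssim\hv^s$, and $\|\d_t^s u\|_{\infty,\vtx},\|\d_t^{s+1}u\|_{\infty,\vtx}\le\|u\|_{s+1,\infty,\vtx}$, then converting $\|\cdot\|_\vtx$ back to $\|\cdot\|_{M(\tau)}$ via Lemma~\ref{lem:Mspd}, delivers the stated estimate. I expect the main obstacle to be the inductive commutator bound: one must arrange the recursion so that Lemma~\ref{lem:APcomm} is applied to the smooth exact iterate $\X{k-1}\uh(0)=\uh^{(k-1)}(0)$ rather than to any discrete quantity, and track carefully that only time derivatives of order at most $s-1$ feed into the $H^{p+1}$-seminorm, so that $|u|_{\vtx,p+1,s-1}$ (and not a higher-order seminorm) appears on the right.
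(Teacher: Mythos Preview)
Your proposal is correct and follows essentially the same route as the paper: the same stability/consistency split, the same commutator $\theta_h^{(k)}=(\Xh k P_h-P_h\X k)\uh(0)$ (which the paper calls $\veps_k$), the same recursion $\theta_h^{(k)}=M_0^{-1}(A_h+kM_1)\theta_h^{(k-1)}+M_0^{-1}(A_hP_h-P_hA)\uh^{(k-1)}(0)$ bounded via Lemma~\ref{lem:APcomm}, and the same treatment of the stage-$s$ correction via $M(\tau)^{-1}M_0-I=\tau M(\tau)^{-1}M_1$ together with Lemma~\ref{lem:time-derivative-uhat} and Lemma~\ref{lem:rho-Taylor-remainder} for the remainder. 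The only cosmetic difference is that you keep the extra factor of $\tau$ in the $\mu_s$-type term explicitly (the paper simply bounds it by~$1$), which is harmless.
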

\begin{proof}
  Let $\uh_{hs}(\tau) = \Rsat_{hs}(\tau)\uh_h^0 $ and let
  $\veps_k = (\Xh k P_h - P_h \X k) \uh(0)$. Then projecting and
  subtracting~\eqref{eq:17} from the expansion defining
  $\Rsat_{hs}(\tau)\uh_h^0$, we obtain
  \begin{align*}
    \uh_{hs}(\tau) - P_h \uh(\tau)
    & = \sum_{k=0}^{s-1} \frac{\tau^k}{k!} \big[
      \Xh k (\uh_h^0 - P_h \uh(0) )
      + \veps_k 
      \big]
      \\
     & +
      \frac{\tau^s}{s!}  M^{-1}M_0 \big[\Xh s(\uh_h^0 - P_h \uh(0))\big]
    \\
    & + \frac{\tau^s}{s!}
      \big[ (M^{-1}M_0 -I) P_h \uh^{(s)}(0)
      + M^{-1}M_0 \veps_s
      \big]
      - P_h \rho_{s+1}.
  \end{align*}
  Letting $\mu_s = (M^{-1}M_0 -I) P_h \uh^{(s)}(0)$, 
  and noting that $\veps_0 = 0$,
  \begin{equation}
    \label{eq:19}
    \uh_{hs}(\tau) - P_h \uh (\tau)=
    \Rsat_{hs}(\tau)\big[\uh_h^0 - P_h \uh(0) \big]
    + \sum_{k=1}^{s} \frac{\tau^k}{k!} \veps_k
    +
    \frac{\tau^s}{s!}  \big[
    \mu_s
    + M^{-1}M_0 \veps_s
    \big]
    - P_h \rho_{s+1}.    
  \end{equation}
  To estimate the terms on the right hand side, we first use 
  Lemma~\ref{lem:AXRbounds} to conclude that
  $
  \|\Rsat_{hs}(\tau)[\uh_h^0 - P_h \uh(0) ]\|_{M} \lesssim
  \| \uh_h^0 - P_h \uh(0) \|_{M_0}.  
  $
  By Lemma~\ref{lem:time-derivative-uhat},
  $
  \| \mu_s\|_\vtx \lesssim
  \hv^s \| \d_t^s u\|_{\infty, \vtx}.     
  $
  To bound $\veps_k$, note that
  \begin{equation}
    \label{eq:48}
    \veps_k = M_0^{-1} (A_h + k M_1) \veps_{k-1} + \eta_{k-1}
  \end{equation}
  where $\eta_j = M_0^{-1}(A_h P_h - P_h A) \uh^{(j)}(0)$.  By
  Lemma~\ref{lem:APcomm},
  $\| \eta_j\|_\vtx \lesssim \hv^{p+1} | u|_{\vtx, p+1, j}.$ Hence
  recursively bounding $ \| \veps_k\|_\vtx$ by
  $ \| \veps_{k-1}\|_\vtx$ using~\eqref{eq:48}, and noting that
  $\veps_0=0$, we have
  $ \| \veps_k \|_{M} \lesssim \hv^{p+1} | u|_{\vtx, p+1, k-1}.  $ The
  final term in~\eqref{eq:19} can be treated using
  Lemma~\ref{lem:rho-Taylor-remainder}, which yields
  $\| P_h\rho_{s+1}(\tau) \|_\vtx \lesssim \tau^{s+1}\hv^{s+1} \|
  \d_t^{s+1} u \|_{\infty, \vtx}.$ When these estimates are used to
  bound the terms in the right hand side of~\eqref{eq:19} (and noting
  that $\tau$ is a common factor in all terms except the first), we
  obtain the stated inequality.
\end{proof}

In order to improve the stability of these explicit SAT schemes, we
shall now divide each tent into $r$ subtents and apply the SAT scheme
in each subtent.

\begin{definition}[Subtents]
  \label{def:subtents}
  Subdivide a tent $\Tv$ into $r$ subtents as follows. For
  $\ell=1, \ldots, r$, 
  define the $\ell$th subtent by
  \[
    \Tvk = \{(x, t) : x \in \ov, \; \vphi^\kk(x) \le t \le
    \vphi^\kkk(x)\},
  \]
  where   $\hat t^\kk = (\ell-1)/r$,
  $\vphi^\kk = \vphi(x, \hat t^\kk).$ 
  Let $\delta^\kk = \vphi^\kkk - \vphi^\kk.$ Using
  $\delta^\kk$ in place of $\delta$ in \eqref{eq:6} and \eqref{eq:M1},
  we define $a^\kk(w, v)$ and $M_1^\kk$, respectively, and let
  $A_h^\kk: \Vhv \to \Vhv$ be defined by
  $(A_h^\kk w, v)_\vtx = a^\kk(w, v)$ for $w, v \in \Vhv$. Finally let
  $M_0^\kk$ be defined by \eqref{eq:M0} after replacing $\vpbot$ there
  by $\vphi^\kk$ and let  $M^\kk(\tau) = M_0^\kk - \tau M_1^\kk$.
  It is easy to see that
  $\delta^\kk = \delta /r$ and 
  \begin{equation}
    \label{eq:subtent-ids}
    A_h^\kk = \frac 1 r A_h,     \quad
    M_1^\kk = \frac 1 r M_1, \quad
    M^\kk(0) = M(\hat t^\kk), \quad 
    M^\kk(1) = M(\hat t^\kkk). 
  \end{equation}
\end{definition}

\begin{definition}[Discrete $s$-stage SAT propagator using $r$ subtents: $\Rsat_{rhs}$]
  \label{def:s-SAT}
  Define $\Xhl k$ by replacing $A_h, M_1,$ and $M_0,$ by
  \smash{$A_h^\kk, M_1^\kk,$ and $M_0^\kk,$} respectively,
  in~\eqref{eq:Xhrecurse}.  Define $\Rsat_{\kk, hs}(\tau)$ on a
  subtent $\Tvk$ by replacing $M_0, \Xh k$ and $M$ by
  $M_0^\kk, \Xhl k$ and $M^\kk$, respectively, in
  Definition~\ref{def:sSATflow-Xh}.  Applying on the $r$ subtents
  successively, we define
  $ \Rsat_{rhs} = \Rsat_{[r], hs}(1) \circ \Rsat_{[r-1],
    hs}(1) \cdots\circ \Rsat_{[1], hs}(1).  $
\end{definition}

Note that the constant in ``$\lesssim$'' will not be allowed to depend
on $r$ (so that we may  admit
examples with $\hv$-dependent  $r$), as
emphasized in the next lemma.

\begin{lemma}[Local error in a tent]
  \label{lem:local-err-sat}
  Let $u$ be the exact solution of~\eqref{eq:tenteqforu} on a causal
  tent $\Tv$, $\uh = u \circ \vPhi \in C^{s+1}(0, 1, \Hv \cap
  H^{p+1}(\ovh)^L)$,
  and let $\uh_{rhs} = \Rsat_{rhs} \uh_h^0.$
  Then there is a mesh-independent constant $c_{s, p}$ that is also 
  independent of $r$ such that 
  \begin{align*}
    c_{s, p}\| \uh_{rhs} - \uh (1)  \|_{M(1)}
    & \le 
      \| \uh_h^0 - P_h\uh (0)   \|_{M(0)}
      +     \hv^s \| u\|_{s+1, \infty, \vtx}
      + \hv^{p+1}| u |_{\vtx, p+1, s-1}.
  \end{align*}
\end{lemma}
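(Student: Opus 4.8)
The plan is to telescope the error over the $r$ subtents, combining the one-subtent error estimate of Lemma~\ref{lem:Rsat-ee-0} with a subtent-level stability bound, and then summing the contributions with care that the constant stays independent of $r$. Write $\uh_h^{[\ell]}$ for the output of applying $\Rsat_{[\ell], hs}(1)$ to $\uh_h^{[\ell-1]}$, with $\uh_h^{[0]} = \uh_h^0$, so that $\uh_{rhs} = \uh_h^{[r]}$. Let $\uh^{[\ell]} = \uh(\hat t^{[\ell+1]})$ denote the exact mapped solution at the top of the $\ell$th subtent (equivalently, the bottom of the $(\ell+1)$st). The local error contributed by subtent $\ell$ is $\|\Rsat_{[\ell],hs}(1) (P_h \uh^{[\ell-1]}) - P_h \uh^{[\ell]}\|$ measured in the appropriate $M^{[\ell]}(1) = M(\hat t^{[\ell+1]})$ norm; this is exactly what Lemma~\ref{lem:Rsat-ee-0} bounds, provided that lemma is first restated for a generic subtent. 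The key structural facts making this work are the subtent identities~\eqref{eq:subtent-ids}: since $A_h^{[\ell]} = A_h/r$, $M_1^{[\ell]} = M_1/r$, and $M_0^{[\ell]}$ is a causal mass matrix with the same operator-norm bounds (Lemma~\ref{lem:Mspd} applies at any pseudotime), the constants in Lemmas~\ref{lem:AXRbounds}, \ref{lem:APcomm}, \ref{lem:rho-Taylor-remainder} carry over to each subtent uniformly in $\ell$ and $r$.

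First I would record the per-subtent local error bound: applying (the subtent version of) Lemma~\ref{lem:Rsat-ee-0} on $\Tvk$ with pseudotime $\tau=1$, initial datum $P_h\uh^{[\ell-1]}$, and noting that the subtent has ``height'' scaled by $1/r$ — so the pull-back $\delta^{[\ell]} = \delta/r$ and hence the factor $\hv$ in the estimate becomes $\hv/r$, while $\tau=1$ — one gets
\[
  \| \Rsat_{[\ell],hs}(1) P_h\uh^{[\ell-1]} - P_h \uh^{[\ell]} \|_{M^{[\ell]}(1)}
  \;\lesssim\;
  (\hv/r)^s \| u\|_{s+1,\infty,\vtx} + (\hv/r)\,\hv^{p+1} |u|_{\vtx,p+1,s-1},
\]
where the seminorms on the right are over the whole tent (each subtent's contribution is dominated by the tent-wide sup). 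Crucially, the time-derivative and projection seminorms do not shrink with $r$, but the explicit $(\hv/r)^s$ and $(\hv/r)$ prefactors do. Second, I would establish subtent stability: the argument of Lemma~\ref{lem:Rsat-ee-0} applied with $\uh$ replaced by a discrete function (or a direct energy estimate as in the lowest-order SAT analysis) should give $\|\Rsat_{[\ell],hs}(1) w\|_{M^{[\ell]}(1)} \le (1 + c\,\hv/r)\|w\|_{M^{[\ell]}(0)}$ for $w \in \Vhv$, with $c$ independent of $\ell, r$; here the $M^{[\ell]}(1) = M^{[\ell+1]}(0)$ matching from~\eqref{eq:subtent-ids} is what lets the norms chain. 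Since the lemma as stated only claims the estimate with a generic constant $c_{s,p}$ on the left (i.e. it tolerates an $O(1)$ stability constant over a single tent), I can afford this mild growth.

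Third, I would telescope. Writing $e^{[\ell]} = \uh_h^{[\ell]} - P_h\uh^{[\ell]}$, one has
\[
  e^{[\ell]} = \Rsat_{[\ell],hs}(1)\, e^{[\ell-1]} + \big(\Rsat_{[\ell],hs}(1) P_h\uh^{[\ell-1]} - P_h\uh^{[\ell]}\big),
\]
so by the subtent stability bound and the per-subtent local error bound,
\[
  \| e^{[\ell]}\|_{M^{[\ell]}(1)} \le (1 + c\hv/r)\|e^{[\ell-1]}\|_{M^{[\ell-1]}(1)} + \big(\text{local error}_\ell\big),
\]
with $\|e^{[0]}\|_{M^{[0]}(0)} = \|\uh_h^0 - P_h\uh(0)\|_{M(0)}$. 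Iterating $r$ times and using $(1+c\hv/r)^r \le e^{c\hv} \lesssim 1$, the accumulated stability factor is uniformly bounded, and the $r$ copies of the local error, each of size $\lesssim (\hv/r)^s \| u\|_{s+1,\infty,\vtx} + (\hv/r)\hv^{p+1}|u|_{\vtx,p+1,s-1}$, sum to $\lesssim r\cdot(\hv/r)^s \| u\|_{s+1,\infty,\vtx} + r\cdot(\hv/r)\hv^{p+1}|u|_{\vtx,p+1,s-1} = r^{1-s}\hv^s\|u\|_{s+1,\infty,\vtx} + \hv^{p+1}|u|_{\vtx,p+1,s-1}$, which is $\le \hv^s\|u\|_{s+1,\infty,\vtx} + \hv^{p+1}|u|_{\vtx,p+1,s-1}$ since $s \ge 1$. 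Finally, converting from the $e^{[r]}$ bound to the claimed bound on $\|\uh_{rhs} - \uh(1)\|_{M(1)}$ uses the triangle inequality with $\|\uh(1) - P_h\uh(1)\|_{M(1)} \lesssim \hv^{p+1}|u|_{\vtx,p+1}$, absorbed into the last term. The main obstacle I anticipate is \emph{bookkeeping the $r$-independence rigorously}: one must verify that every invocation of Lemmas~\ref{lem:AXRbounds}, \ref{lem:APcomm}, \ref{lem:rho-Taylor-remainder} on a subtent produces a constant that does not secretly depend on $r$ through $\delta^{[\ell]} = \delta/r$ (it does not, since those lemmas only use $\delta \lesssim \hv$, and $\delta/r \le \hv$), and that the subtent inverse/trace inequalities use the \emph{spatial} mesh size $h_K$ — which is unchanged by subdivision in pseudotime — rather than anything shrinking with $r$. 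Once that is confirmed, the telescoping is routine.
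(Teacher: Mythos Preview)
Your approach is essentially the same as the paper's: telescope the subtent errors using Lemma~\ref{lem:Rsat-ee-0} on each $\Tvk$ and sum, exploiting that each subtent contributes a local error with an extra $1/r$ factor. The paper phrases this as applying Lemma~\ref{lem:Rsat-ee-0} with $\tau = 1/r$ (in the pseudotime of the un-split tent), which is equivalent to your ``subtent with $\tau=1$'' viewpoint via the identities in~\eqref{eq:subtent-ids}.

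You are actually more careful than the paper on one point: you explicitly address why the stability constant does not accumulate as $C^r$. One small correction there: the subtent stability bound you can prove from the structure $\Rsat_{[\ell],hs}(1) = I + O(1/r)$ (using $\|\Xhl{k}\|_{\vtx,h}\lesssim r^{-k}$ and Lemma~\ref{lem:AXRbounds}) is $(1+C/r)$, not $(1+c\hv/r)$---the extra $\hv$ is not there, since $\|A_h\|_{\vtx,h}$ is only $O(1)$. This does not matter for your argument, since $(1+C/r)^r \le e^C$ is still $r$-independent. Also, your explanation ``$\hv$ becomes $\hv/r$'' is a slight misdescription of the mechanism for the projection term: the $\hv^{p+1}$ factor from Lemma~\ref{lem:APcomm} is purely spatial and does not shrink, but the $1/r$ appears because $A_h^{[\ell]} = A_h/r$ enters the commutator $\eta_j^{[\ell]}$. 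The conclusion $(\hv/r)\,\hv^{p+1}$ is nonetheless correct.
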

\begin{proof}
  Denoting the discrete solutions by
  $\uh_{1, hs} = \Rsat_{[1], hs} \uh_h^0$ and
  $\uh_{\ell, hs} = \Rsat_{[\ell], hs} \uh_{\ell-1, hs}$ for $1\le
  \ell \le r$, we compare them with the subtent
  exact solutions, denoted by  
  $\uh_\ell = \uh(\hat t^\kkk)$.  
  In the pseudotime coordinate of the un-split tent $\Tv$, the value
  $\tau=1/r$ corresponds to the top of the first subtent, where the
  exact solution is $\uh_1 = \uh(1/r)$.
  Thus Lemma~\ref{lem:Rsat-ee-0} with $\tau=1/r$ gives 
  \[
    \| \uh_{1, hs} - P_h \uh_1\|_{M(1/r)}
    \lesssim \| \uh_h^0 - P_h \uh(0) \|_{M(0)} 
    +
    \frac 1 r \left(  \hv^s \| u\|_{s+1, \infty, \vtx}
      + \hv^{p+1}| u |_{\vtx, p+1, s-1}\right).
  \]
  Similarly, on the $\ell$th subtent, for $\ell=1, 2, \ldots, r$, 
  \begin{align*}
    \| \uh_{\ell, hs} - P_h \uh_\ell\|_{M(\hat t^\kkk)}
    & \lesssim \| \uh_{\ell-1, hs} - P_h \uh_{\ell-1}
    \|_{M(\hat t^\kk)} 
    \\
    &+
    \frac 1 r \left(  \hv^s \| u\|_{s+1, \infty, \vtx}
      + \hv^{p+1}| u |_{\vtx, p+1, s-1}\right).
  \end{align*} 
  Applying this estimate for $\ell=r, r-1, \ldots, 1$, successively in
  that order, where at each step the first term on the right hand side
  is bounded using the next estimate,
  \[
    \| \uh_{rhs} - P_h \uh(1)\|_{M(1)}
    \lesssim \| \uh_h^0 - P_h \uh(0) \|_{M(0)}  +
     \left(  \hv^s \| u\|_{s+1, \infty, \vtx}
      + \hv^{p+1}| u |_{\vtx, p+1, s-1}\right) \sum_{\ell=1}^r \frac 1 r
  \]
  which completes the proof.
\end{proof}

Letting $\RRsat_{rhs}$ denote the collection of explicit tent
propagators $\Rsat_{rhs} \circ P_h : \Lv \to \Vhv$ on all
tents, we use Definition~\ref{def:T2G} to set the global propagators
$\TG(i, j, \RRsat_{rhs})$, and consider the discrete solution
$\usat_{rhs} = \TG(m, 0, \RRsat_{rhs}) u^0$ at the final time $T$.

\begin{theorem}[Error estimate for the SAT scheme]
  \label{thm:sat-ee}  
  Assume that
  there is a mesh-independent $\Cstab \ge 0$ such that 
  \begin{equation}
    \label{eq:stability-sat-assume}
    \| \Rsat_{rhs} v \|_{M(1)} \le (1 + \Cstab \hv) \| v \|_{M(0)}
  \end{equation}
  for all $v \in \Vhv$ on all tents $\Tv$. Suppose also that \eqref{eq:32}
  and the conditions of Theorem~\ref{thm:semidiscrete} hold. Then the
  fully discrete explicit $s$-stage SAT solution $\usat_{rhs}$,
  obtained using spatial polynomial degree $p$, satisfies
  \[
    \| u (T)  - \usat_{rhs} \|_\om^2
    \;\lesssim\; 
    \sum_{j=1}^m \sum_{\vtx \in V_j} 
    \hv^{2s-1} \|  u\|_{s+1, \infty, \vtx}^2
    + \hv^{2p+1} \| u \|_{\vtx, p+1, s-1}^2.
  \]
\end{theorem}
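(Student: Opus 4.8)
The plan is to repeat the template of Theorem~\ref{thm:semidiscrete}, and more precisely of Theorem~\ref{thm:lowest-explicit-q}, since the hypothesis~\eqref{eq:stability-sat-assume} is exactly the weaker, $O(\hv)$-perturbed form of stability covered by Remark~\ref{rem:weaker_stab}. First I would convert~\eqref{eq:stability-sat-assume} from a per-tent $M$-norm bound into a per-layer bound in the $\|\cdot\|_{C_i}$ norm. On a tent $\Tv$ with $\vtx\in V_i$, let $\Rt_h^{i,i-1}$ be the layer propagator generated from $\RRsat_{rhs}$ via Definition~\ref{def:T2G}, and set $\hat r_h=(\Rt_h^{i,i-1}w)\circ\vPhi$, $\hat w=w\circ\vPhi$. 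Exactly as in the proof of Lemma~\ref{lem:Rh_bound} one has $\|P_h\hat w\|_{M(0)}\le\|\hat w\|_{M(0)}$ (using the commutativity~\eqref{eq:MPcommute}), so~\eqref{eq:stability-sat-assume} with $\uh_h^0=P_h\hat w$ gives $\|\hat r_h\|_{M(1)}^2\le(1+\Cstab\hv)^2\|\hat w\|_{M(0)}^2\le(1+\Cstab h_i)^2\|\hat w\|_{M(0)}^2$; summing over $\vtx\in V_i$ yields $\|\Rt_h^{i,i-1}w\|_{C_i}\le(1+\Cstab h_i)\|w\|_{C_{i-1}}$, which is precisely~\eqref{eq:44}. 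Since~\eqref{eq:32} is assumed, Remark~\ref{rem:weaker_stab} then supplies the layer-uniform bound $\|\Rt_h^{i,j}w\|_{C_i}\le e^{\Cstab T}\|w\|_{C_j}$ for the composed global propagator $\Rt_h^{i,j}=\TG(i,j,\RRsat_{rhs})$.

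Second, the purely algebraic identity of Lemma~\ref{lem:errorpropagation} carries over verbatim to $\Et_h^{i,j}=R^{i,j}-\Rt_h^{i,j}$, giving $\Et_h^{m,0}=\sum_{j=1}^m\Rt_h^{m,j}\Et_h^{j,j-1}R^{j-1,0}$. Combining this with the layer-uniform stability bound and~\eqref{eq:14}, and writing $u_j=u|_{C_j}$ so that $R^{j-1,0}u^0=u_{j-1}$, we get $\|u(T)-\usat_{rhs}\|_\om\lesssim\|\Et_h^{m,0}u^0\|_{C_m}\lesssim\sum_{j=1}^m\|\Et_h^{j,j-1}u_{j-1}\|_{C_j}$, the fixed constant $e^{\Cstab T}$ being absorbed into $\lesssim$ as in Remark~\ref{rem:weaker_stab}. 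Next, the support of $\Et_h^{j,j-1}u_{j-1}$ on $C_j$ decomposes into disjoint vertex patches, so $\|\Et_h^{j,j-1}u_{j-1}\|_{C_j}^2=\sum_{\vtx\in V_j}\|\Et_h^{j,j-1}u_{j-1}\|_{C_j,\vtx}^2$, and on each tent $\Tv$ one has $\|\Et_h^{j,j-1}u_{j-1}\|_{C_j,\vtx}=\|\uh(1)-\uh_{rhs}\|_{M(1)}$ with $\uh_{rhs}=\Rsat_{rhs}(P_h\uh(0))$ — exactly the left-hand side of Lemma~\ref{lem:local-err-sat} with $\uh_h^0=P_h\uh(0)$. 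That lemma (with its $r$-independent constant) then gives $\|\Et_h^{j,j-1}u_{j-1}\|_{C_j,\vtx}\lesssim\hv^s\|u\|_{s+1,\infty,\vtx}+\hv^{p+1}|u|_{\vtx,p+1,s-1}$, the projection term $\|\uh_h^0-P_h\uh(0)\|_{M(0)}$ vanishing.

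Finally I would run the same endgame as in Theorem~\ref{thm:semidiscrete}: summing the squared local bounds and using $\hv\le h_j$ gives $\|\Et_h^{j,j-1}u_{j-1}\|_{C_j}^2\lesssim h_j\sum_{\vtx\in V_j}\big(\hv^{2s-1}\|u\|_{s+1,\infty,\vtx}^2+\hv^{2p+1}|u|_{\vtx,p+1,s-1}^2\big)$, hence $\|u(T)-\usat_{rhs}\|_\om\lesssim\sum_{j=1}^m h_j^{1/2}\big(\sum_{\vtx\in V_j}(\cdots)\big)^{1/2}$, and a Cauchy--Schwarz step together with $\sum_j h_j\lesssim T$ from~\eqref{eq:32} delivers the stated estimate. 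I do not expect a genuine obstacle here — every ingredient (the weaker-stability machinery of Remark~\ref{rem:weaker_stab}, the local bound of Lemma~\ref{lem:local-err-sat}, the operator bounds of Lemma~\ref{lem:AXRbounds}) is already in place. The two points that merely need care are (i) keeping explicit track of the per-tent factor $1+\Cstab\hv$ through the summation over a layer so that~\eqref{eq:44} is reached in the exact form Remark~\ref{rem:weaker_stab} requires, and (ii) using the asserted $r$-independence in Lemma~\ref{lem:local-err-sat} so that the number of subtents does not leak into the global constant.
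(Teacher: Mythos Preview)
Your proposal is correct and follows essentially the same approach as the paper's own proof, which simply says to repeat the argument of Theorem~\ref{thm:semidiscrete} as extended in Remark~\ref{rem:weaker_stab}, replacing Lemma~\ref{lem:stability_semidiscrete} by the assumed~\eqref{eq:stability-sat-assume} and Lemma~\ref{lem:local_semidiscrete_error} by Lemma~\ref{lem:local-err-sat}. You have in fact spelled out more of the details (the per-tent to per-layer passage, the vanishing of the projection term when $\uh_h^0=P_h\uh(0)$, and the Cauchy--Schwarz endgame) than the paper's terse one-sentence proof sketch.
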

\begin{proof}
  The proof proceeds along the lines of the extension of the proof of
  Theorem~\ref{thm:semidiscrete} mentioned in
  Remark~\ref{rem:weaker_stab}, replacing the application of
  Lemma~\ref{lem:stability_semidiscrete}
  by~\eqref{eq:stability-sat-assume}, and replacing the application of
  Lemma~\ref{lem:local_semidiscrete_error} by
  Lemma~\ref{lem:local-err-sat}.
\end{proof}

Theorem~\ref{thm:sat-ee} bounds the error by terms that converge to
zero at the same rate, provided the number of stages in the SAT scheme
is tied to the spatial degree by $s= p+1$. Then, the convergence rate
given by Theorem~\ref{thm:sat-ee} is $O(h^{p+1/2})$, the same rate we
obtained for the semidiscretization (in
Theorem~\ref{thm:semidiscrete}). In \S\ref{sssec:ratenotimprovable} we
show, through a numerical example,
that this rate is generally un-improvable.

One can solve a local eigenproblem on a tent to computationally check
if the stability assumption~\eqref{eq:stability-sat-assume} is
satisfied. Since this eigenvalue computation is described in detail
in~\cite[\S6.1]{GopalSchobWinte20}, we shall not comment further on
this computational avenue for stability verification. 
In \S\ref{sssec:p=0} and \S\ref{ssec:stab-s=2}, we 
describe two cases where stability can be proved staying within the
framework of the general symmetric linear hyperbolic systems we have
been considering.

\subsubsection{Numerical observations on the convergence rate}
\label{sssec:ratenotimprovable}

It is natural to wonder if the convergence rate of $O(h^{p+1/2})$,
given by Theorem~\ref{thm:sat-ee} (and 
Theorem~\ref{thm:semidiscrete}), is
improvable.  Our numerical experience from computations with various
hyperbolic systems suggests that one is likely to observe a higher
convergence rate of $O(h^{p+1})$ on generic examples and meshes.
Yet, as we show
now, there is at least one family of tent meshes in the $N=2$ case
where  $O(h^{p+1/2})$ rate of
convergence is observed.
Such  tent meshes are created by selecting the spatial mesh
$\oh$  from the mesh families described in~\cite{Peterson91},
where it is shown that the standard $O(h^{p+1/2})$ error estimate for
the DG method for {\em stationary} advection equation cannot be
improved. Building causal tents atop such a mesh, we show that our
$O(h^{p+1/2})$ estimate for the time-dependent advection problem also
cannot be improved.

The structured spatial meshes we borrow from~\cite{Peterson91} consist
of horizontal layers of right triangles grouped in vertical bands.  As
the mesh is refined, the number of vertical bands is controlled by a
parameter $\sigma \in [0, 1]$.  We used the MTP discretization with
polynomial orders $p$ varying from $0$ to $3$, together with SAT time
stepping with $r=\max \{1, 2p\}$ and $s=p+1$
to solve the advection problem of
Example~\ref{eg:advection} (modified to take a nonhomogeneous inflow
boundary condition).  The domain $\om $ is set to the unit square, the
advective vector field $b$ is set to the constant vector $b=[0,1]^t$,
so that $\din \om = \{ (x_1, x_2): 0 \le x_2 \le 1\},$ and the inflow
boundary condition is set by $u = x_1^{p+1}$ on $\din \om$. The
initial condition is $u^0(x_1, x_2) \equiv x_1^{p+1}$. At $t = T=1$, the MTP
solution approximated the exact solution $u(x_1, x_2, t) = x_1^{p+1}$
at all spatial points $(x_1,x_2) \in \om.$

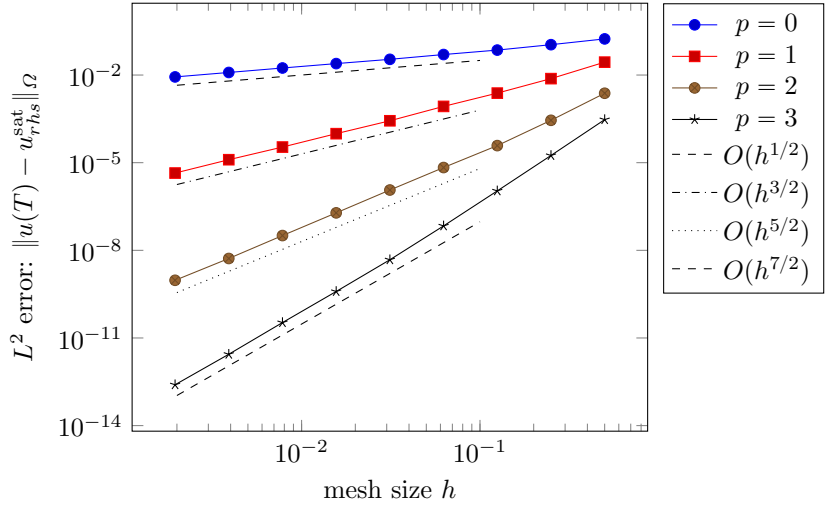
\begin{figure}[htpb]
  \centering
  \begin{tikzpicture}
  \begin{loglogaxis}[
    mark repeat={1},
    ylabel={$L^2$ error: $\| u(T) - \usat_{rhs} \|_\om$},
    xlabel={mesh size $h$},
    legend
    entries={$p=0$, $p=1$, $p=2$, $p=3$, 
             $O(h^{1/2})$, $O(h^{3/2})$, $O(h^{5/2})$,
             $O(h^{7/2})$},  
    legend pos=outer north east,
  ]
  \foreach \column in {0,...,3}{
    \addplot+[] table[x={h},y={l2e_p\column}]
    {peterson_mesh/table_convect2dconv_SAT.txt};
  }
  \addplot[domain=2/1000:1/10, samples=3, dashed]{.1*x^.5};
  \addplot[domain=2/1000:1/10, samples=3, dashdotted]{.02*x^1.5};
  \addplot[domain=2/1000:1/10, samples=3, dotted]{.002*x^2.5};
  \addplot[domain=2/1000:1/10, samples=3, dashed]{.0003*x^3.5};
  \end{loglogaxis}
  \end{tikzpicture}
  \caption{Convergence rates observed when solving the
    advection problem}
  \label{fig:satcv}
\end{figure}

We obtained different convergence rates for different choices of
$\sigma$, but in all cases, the rates are bounded between
$O(h^{p+1/2})$ and $O(h^{p+1})$.  We obtained the minimal convergence
rate (largest errors) when $\sigma = 3/4$ and $\sigma = 1/2$ for
$p \ge 1$ and $p = 0$, respectively.  The errors and rates observed
for these values of $\sigma$ are plotted in Figure~\ref{fig:satcv},
which clearly show $O(h^{p+1/2})$ rate of convergence. We note that
our rate-minimizing $\sigma$-value of $3/4$ is the same value of
$\sigma$ used in~\cite{Peterson91} for the $p=1$ case (the only case
where numerical results are given there).

\subsubsection{Stability verification in the  $p=0, \;s=1$ case}
\label{sssec:p=0}

This case is motivated by the many studies of the $p=0$ case in the DG
literature (see e.g.~\cite{BurmaErnFerna10, Despr04, SunShu17}), often
called the finite volume case, and is illustrative of why special cases
are worth pursuing. We focus on the operator of the SAT
scheme, obtained by setting $s=1$ in Definition~\ref{def:sSATflow-Xh}, 
which can be simplified to
\[
  \Rsat_{h1}(\tau) = I + \tau M(\tau)^{-1}(A_h + M_1),
\]
and the corresponding operator $\Rsat_{rh1}$ obtained using $r$
subtents, per Defintion~\ref{def:s-SAT}.  Note that $\Rsat_{h1}(\tau)$
differs slightly from the $q=1$ case of
Definition~\ref{def:lowest-order-explicit}, namely,
$\Rexp_{h1}(\tau) = I + \tau M_0^{-1}(A_h + M_1)$.  While
$\Rexp_{h1q}$ only requires one local mass matrix inversion for $q$
iterations within a tent, the application of $\Rsat_{rh1}$ requires
one local inversion per subtent. However, $\Rsat_{rh1}$ admits a
stronger stability estimate that we shall prove after making the
following observation.

\begin{lemma}
  \label{lem:p0AX}
  When $p=0$, we have, for all $v, w \in \Vhv$,
  \begin{align}
    \label{eq:45}
    (A_h w, v)_\vtx & \lesssim \| w \|_\vtx |v|_d,
    \\
    \label{eq:46}
    \| (A_h + M_1) v \|_{\vtx} & \lesssim |v |_d. 
  \end{align}
\end{lemma}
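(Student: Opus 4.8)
The plan is to exploit the special structure available when $p=0$: the discrete space $\Vhv$ consists of elementwise-constant $\RRR^L$-valued functions, so all volume gradient terms in $a(\cdot,\cdot)$ drop out, and $A_h$ is built purely from facet terms. First I would write out $a(w,v) = (A_h w, v)_\vtx$ using \eqref{eq:6} with $\gradx v = 0$, leaving only the facet contributions; after splitting into interior and boundary facets (as in Lemma~\ref{lem:intgparts}) this expresses $(A_h w, v)_\vtx$ as a sum over $F \in \Fcvi$ of terms like $(\delta\DcnF\{w\},\jmp{v}_F)_F$ and $(\delta S\jmp{w}_F,\jmp{v}_F)_F$, plus a boundary term $-\tfrac12(\delta(\Dc-B)w,v)_F$ over $F\in\Fcvb$. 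Using the design conditions \eqref{eq:Sbound}, \eqref{eq:11}, \eqref{eq:12} together with $\delta\lesssim\hv$ and Cauchy--Schwarz on each facet, each term is bounded by $\hv^{1/2}\|w\|_{\d K}$ (or $\|w\|_\vtx$ after summing, using shape regularity and an inverse/trace inequality on constants: $\|w\|_{\d K}\lesssim h_K^{-1/2}\|w\|_K$) times the facet seminorm $\delta^{1/2}|\jmp{v}|_S$ or $\delta^{1/2}|v|_B$ evaluated on $F$. Summing over facets and invoking \eqref{eq:8} of Lemma~\ref{lem:intgparts}, which identifies $\sum_{F\in\Fcvi}2(\delta S\jmp{v}_F,\jmp{v}_F)_F + \sum_{F\in\Fcvb}(\delta Bv,v)_F$ with $|v|_d^2$, yields \eqref{eq:45}.

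For \eqref{eq:46} the key observation is that $(A_h + M_1)$ is \emph{symmetric}: from the definition $d(w,v) = -[a(w,v)+a(v,w)+(M_1w,v)_\vtx]$ and the fact that $d$ is a symmetric bilinear form (being a sum of facet inner products per \eqref{eq:8}), one gets that $(2A_h+M_1)$ is selfadjoint on $\Vhv$, hence so is $A_h + \tfrac12 M_1$; but I actually want $A_h+M_1$. A cleaner route: note $((A_h+M_1)v,w)_\vtx = a(v,w) + (M_1 v,w)_\vtx$, and I can test against arbitrary $w\in\Vhv$. Choosing $w$ to realize the norm, $\|(A_h+M_1)v\|_\vtx = \sup_{\|w\|_\vtx=1}\big[(A_h v,w)_\vtx + (M_1 v,w)_\vtx\big]$. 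The second term is $\lesssim \|v\|_\vtx$ by Lemma~\ref{lem:AXRbounds}, which is \emph{not} obviously $\lesssim |v|_d$. The fix is to bound $(A_h v,w)_\vtx$ not by $\|v\|_\vtx|w|_d$ as in \eqref{eq:45} but by $|v|_d\,\|w\|_\vtx$: for $p=0$ I would rerun the facet estimate putting the $S$- and $B$-seminorms on $v$ (the "$w$" slot of \eqref{eq:45}) and the plain $L^2$ norm plus inverse inequality on $w$, using the symmetry of $\DcnF$, $S$, $B$ to move the favorable seminorm onto $v$. For $M_1$: since $(M_1v,v)_\vtx = \sum_j((\d_j\delta)\Lc j v,v)_\vtx$ and $\sum_j \d_j\delta\,\Lc j = \sum_j\d_j(\vptop-\vpbot)\Lc j = \Dc^{\nu_{\mathrm{top}}}\|\gradx\vptop\|_2 - \cdots$ relates to the facet normal-flux data, I expect $(M_1v,w)_\vtx \lesssim |v|_d\|w\|_\vtx$ as well — this is where I would use that jumps of $\DcnF$ vanish by \eqref{eq:divL0} so that the tent-geometry derivative terms can be re-summed as facet contributions controlled by $|v|_d$.

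The main obstacle I anticipate is precisely showing that the $M_1$ contribution is controlled by $|v|_d$ rather than merely by $\|v\|_\vtx$: the seminorm $|v|_d$ only sees jumps across interior facets and the boundary trace, so one must argue that for $p=0$ the volume integral $(M_1v,v)_\vtx$ can be absorbed. I would handle this by integration by parts on each element (exploiting $\Lc j$ constant and $v$ constant so $\d_j v=0$) to convert $\sum_j((\d_j\delta)\Lc j v,w)_K$ into a boundary term $(\delta\Dc v,w)_{\d K}$ minus the already-handled flux terms, so that the combination $A_h+M_1$ telescopes into a pure sum of \emph{jump} facet terms over $\Fcvi$ (interior-facet $\delta\Dc$ contributions pair up since $\jmp{\DcnF}=0$) plus a boundary term over $\Fcvb$; each such term is then bounded via \eqref{eq:11}, \eqref{eq:12} by $|v|_d$ times $\|w\|_\vtx$, giving \eqref{eq:46}. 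If this telescoping identity for $p=0$ requires more care, the fallback is to write $(A_h+M_1)v = A_h v + M_1 v$ and bound $M_1 v$ using the explicit subtent identity $M_1^\kk = \tfrac1r M_1$ together with the causality-driven fact that $\|\gradx\delta\|_2$ is controlled — but I expect the integration-by-parts / telescoping argument to be the clean one, mirroring the proof of Lemma~\ref{lem:intgparts}.
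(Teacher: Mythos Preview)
Your argument for \eqref{eq:45} is essentially the paper's: drop the volume term since $\gradx v=0$ for $p=0$, rearrange the remaining flux terms as facet sums, and bound using \eqref{eq:DGdesign}, trace/inverse inequalities, and \eqref{eq:8}. (Minor typo: the boundary flux contributes $-\tfrac12(\delta(\Dc+B)w,v)_F$, not $\Dc-B$.)

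For \eqref{eq:46} the paper takes a much shorter route than either of your proposals. Your ``rerun'' idea---bounding $(A_hv,w)_\vtx$ directly by $|v|_d\|w\|_\vtx$---has a genuine gap: the interior-facet term $(\delta\DcnF\{v\},\jmp{w}_F)_F$ involves the \emph{average} $\{v\}$, which is not controlled by $|v|_d$, so swapping roles in the design conditions does not close the estimate for $A_h$ alone. Your telescoping idea (combining $A_h$ with $M_1$ before estimating, so that the element-boundary terms $\delta\Dc v$ and $\delta\Fh_v$ partially cancel and leave only jump-of-$v$ contributions on interior facets) is sound and can be pushed through, but it is more laborious and the resulting boundary term $\tfrac12(\delta(\Dc-B)v,w)_F$ still needs an extra step to relate to $|v|_B$ via \eqref{eq:12} and symmetry of $\Dc$.

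The paper instead observes that \eqref{eq:45} may be read as $(A_h^t v,w)_\vtx\lesssim|v|_d\|w\|_\vtx$, and then adds this to the Cauchy--Schwarz inequality for the symmetric nonnegative form $d$, namely $-( (A_h+A_h^t+M_1)v,w)_\vtx=d(v,w)\le|v|_d|w|_d\lesssim|v|_d\|w\|_\vtx$. Summing the two bounds cancels $A_h^t$ and yields $-((A_h+M_1)v,w)_\vtx\lesssim|v|_d\|w\|_\vtx$, which is \eqref{eq:46}. This two-line algebraic trick avoids any further facet bookkeeping.
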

\begin{proof}
  When $p=0$, the derivative terms in~\eqref{eq:6} vanish, so
  \begin{align*}
    (A_h w, v)_\vtx
    & = \sum_{K \in \ovh}
      -(\delta  \Fh_w,  v)_{\d K}
    \\
    & =
      - \sum_{F \in \Fcvb} \frac 1 2 (\delta (\Dcn + B) w, v)_F
      \\
      &+
      \sum_{F \in \Fcvi}
      (\delta \DcnF \{ w \}, \jmp{v}_F)_F
      - (\delta S \jmp{w}_F, \jmp{v}_F)_F
  \end{align*}
  where we have rearranged  the sum to run over the
  mesh facets. Now, by Cauchy-Schwarz inequality,
  \eqref{eq:DGdesign}, and Lemma~\ref{lem:intgparts}, the estimate
  of~\eqref{eq:45} follows.

  Of course, \eqref{eq:45} can also be written as
  $(w, A_h^t v)_\vtx  \lesssim \| w \|_\vtx |v|_d$ where $A_h^t$ is
  the $\Lv$-adjoint of $A_h$.  When this is added to the
  obvious inequality
  \[
   (-(A_h + A_h^t + M_1)v, w )_\vtx \le
    (-(2A_h + M_1)v, v)_\vtx^{1/2}(-(2A_h + M_1)w, w)_\vtx^{1/2}
    =
    |v|_d |w|_d,
  \]
  we obtain $(- (A_h + M_1) v, w)_\vtx \lesssim |v|_d \|w\|_\vtx$, so
  \[
    \| (A_h + M_1) v \|_{\vtx} =
    \sup_{0 \ne w \in \Vhv}
    \frac{ ( (A_h + M_1) v, w)_\vtx}{ \| w \|_\vtx}
    \lesssim |v|_d
  \]
  proves~\eqref{eq:46}.
\end{proof}

Let $K_h$ denote the kernel of $A_h + A_h^t + M_1 : \Vhv \to \Vhv$ and
let $K_h^\perp$ denote its $\Lv$-orthogonal complement in $\Vhv$. Set
\begin{equation}
  \label{eq:kappa}
  \kappa =
  \sup_{0 \le \tau \le 1}
  \,\sup_{\;0 \ne v \in K_h^\perp}
  \frac{ \| M(\tau)^{-1}(A_h + M_1) v\|_{M(\tau)}^2 } { |v|_d^2}.
\end{equation}

\begin{proposition}[Conditional strong stability]
  \label{lem:p0stability-r}
  In the case $p=0$ and $s=1$, the constant $\kappa$
  of~\eqref{eq:kappa} satisfies $\kappa \lesssim 1$.
  For all
  \begin{equation}
    \label{eq:49-cfl}
    0 \le \tau \le 1/\kappa
  \end{equation}
  and all $v \in \Vhv$, we have
  \begin{equation}
    \label{eq:48-srk}
    \|\Rsat_{h1}(\tau) v \|_{M(\tau)}
    \le \|v \|_{M_0}.
  \end{equation}
  Furthermore, if $r \ge \kappa$ subtents are used, then 
  \begin{equation}
    \label{eq:47}
    \|\Rsat_{rh1} v \|_{M(\tau)}
    \le \|v \|_{M_0},
  \end{equation}
  i.e., the stability assumption~\eqref{eq:stability-sat-assume} of
  Theorem~\ref{thm:sat-ee} holds with $\Cstab=0$.
\end{proposition}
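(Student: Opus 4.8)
The plan is to derive a single energy identity for the single-tent SAT operator and deduce all three assertions from it. In the case $p=0,\,s=1$ one has $\Rsat_{h1}(\tau) v = v + \tau M(\tau)^{-1}(A_h + M_1) v$, and every mass matrix that appears (including the subtent ones, which by~\eqref{eq:subtent-ids} equal $M$ at interior pseudotimes $\le 1$) is selfadjoint positive definite by Lemma~\ref{lem:Mspd}. Expanding $\|\Rsat_{h1}(\tau) v\|_{M(\tau)}^2$, using $M(\tau) = M_0 - \tau M_1$ and the identity $((2A_h + M_1)v, v)_\vtx = -|v|_d^2$ from Lemma~\ref{lem:intgparts}, I expect to obtain
\[
  \| \Rsat_{h1}(\tau) v \|_{M(\tau)}^2
  = \| v \|_{M_0}^2 - \tau \, |v|_d^2
  + \tau^2 \big\| M(\tau)^{-1}(A_h + M_1) v \big\|_{M(\tau)}^2 ,
  \qquad v \in \Vhv .
\]
The bound $\kappa \lesssim 1$ then follows directly: for $z = M(\tau)^{-1}(A_h + M_1) v$, Lemma~\ref{lem:AXRbounds} gives $\|z\|_{M(\tau)}^2 \lesssim \|z\|_\vtx^2 \lesssim \|(A_h + M_1)v\|_\vtx^2$, and~\eqref{eq:46} of Lemma~\ref{lem:p0AX} --- where the hypothesis $p=0$ enters essentially --- gives $\|(A_h + M_1)v\|_\vtx \lesssim |v|_d$; dividing by $|v|_d^2$ and taking the supremum in~\eqref{eq:kappa} finishes it (no restriction to $K_h^\perp$ is needed for this bound).

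The delicate point --- which I expect to be the main obstacle --- is that $|v|_d$ vanishes on $K_h$, so the energy identity alone is not a contraction there; one must show that $\Rsat_{h1}(\tau)$ fixes $K_h$ pointwise. This is again a $p=0$ phenomenon: for $v_0 \in K_h$ we have $|v_0|_d^2 = -((A_h + A_h^t + M_1)v_0, v_0)_\vtx = 0$, so~\eqref{eq:45} of Lemma~\ref{lem:p0AX} forces $(A_h w, v_0)_\vtx = 0$ for all $w \in \Vhv$, i.e.\ $A_h^t v_0 = 0$, whence $(A_h + M_1) v_0 = (A_h + A_h^t + M_1) v_0 - A_h^t v_0 = 0$. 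Thus, writing $v = v_0 + v_\perp$ with $v_0 \in K_h$, $v_\perp \in K_h^\perp$, and using that $d(\cdot,\cdot)$ is a symmetric positive semidefinite form whose kernel is $K_h$ (so $d(v_0, v_\perp) = 0$), one gets $|v|_d = |v_\perp|_d$ and $(A_h + M_1) v = (A_h + M_1) v_\perp$. Substituting these two facts into the energy identity and invoking the definition~\eqref{eq:kappa} of $\kappa$ (applicable at any pseudotime in $[0,1]$) yields
\[
  \| \Rsat_{h1}(\tau) v \|_{M(\tau)}^2
  \le \| v \|_{M_0}^2 - \tau\,|v_\perp|_d^2\,(1 - \tau\kappa) ,
\]
which is $\le \| v \|_{M_0}^2$ as soon as $\tau \le 1/\kappa$, i.e.\ under~\eqref{eq:49-cfl}, proving~\eqref{eq:48-srk}.

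For the subtent estimate~\eqref{eq:47}, recall from~\eqref{eq:subtent-ids} that on the $\ell$th subtent $A_h^\kk = A_h/r$, $M_1^\kk = M_1/r$, $M^\kk(0) = M(\hat t^\kk)$ and $M^\kk(1) = M(\hat t^\kkk) = M(\hat t^\kk) - \frac1r M_1$, so the SAT operator there equals $\Rsat_{\kk, h1}(1) = I + \frac1r M(\hat t^\kkk)^{-1}(A_h + M_1)$. Repeating the energy computation with step $1/r$ and the mass-matrix pair $M(\hat t^\kk), M(\hat t^\kkk)$ in place of $M_0, M(\tau)$, and applying the same $K_h$-splitting to the input $w_{\ell-1}$ (with $K_h^\perp$-component $z_\perp$), gives
\[
  \| \Rsat_{\kk, h1}(1) w_{\ell-1} \|_{M(\hat t^\kkk)}^2
  \le \| w_{\ell-1} \|_{M(\hat t^\kk)}^2 - \frac1r\,|z_\perp|_d^2\Big(1 - \frac{\kappa}{r}\Big) ,
\]
which is $\le \| w_{\ell-1} \|_{M(\hat t^\kk)}^2$ whenever $r \ge \kappa$. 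Chaining over $\ell = 1, \dots, r$ with $\hat t^{[1]} = 0$ and $\hat t^{[r+1]} = 1$ gives $\| \Rsat_{rh1} v \|_{M(1)} \le \| v \|_{M_0}$, which is~\eqref{eq:47}, i.e.\ the stability assumption~\eqref{eq:stability-sat-assume} of Theorem~\ref{thm:sat-ee} with $\Csta = 0$. The only genuinely new ingredient beyond the earlier energy arguments is thus the kernel bookkeeping enabled by Lemma~\ref{lem:p0AX}; the rest is the by-now-familiar energy expansion.
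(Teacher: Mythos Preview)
Your proof is correct and follows essentially the same route as the paper's: the same energy expansion of $\|\Rsat_{h1}(\tau)v\|_{M(\tau)}^2$, the same use of $((2A_h+M_1)v,v)_\vtx=-|v|_d^2$, the same appeal to Lemma~\ref{lem:p0AX} for $\kappa\lesssim 1$, and the same subtent chaining via~\eqref{eq:subtent-ids}. The one place you are more careful than the paper is the kernel bookkeeping: the paper writes the step $\|M^{-1}(A_h+M_1)v\|_M^2\le\kappa|v|_d^2$ for general $v$ without comment, whereas you explicitly decompose $v=v_0+v_\perp$ and show $(A_h+M_1)v_0=0$ for $v_0\in K_h$ using~\eqref{eq:45}. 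The paper's terser version is still justified, since~\eqref{eq:46} applied to $v_0\in K_h$ already gives $\|(A_h+M_1)v_0\|_\vtx\lesssim|v_0|_d=0$ directly; your derivation via $A_h^t v_0=0$ is a valid alternative that makes the role of the $K_h^\perp$ restriction in~\eqref{eq:kappa} transparent.
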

\begin{proof}
  Since $\|M(\tau)^{-1} \|_{\vtx, h} \lesssim 1$ by
  Lemma~\ref{lem:AXRbounds}, the estimate~\eqref{eq:46} of
  Lemma~\ref{lem:p0AX} shows that $\kappa \lesssim 1$ whenever $p=0$.

  Let $v_\tau = \Rsat_{h1}(\tau) v = v + \tau M^{-1} (A_h + M_1)v.$
  Then expanding $\| v_\tau \|_M^2$, 
  \begin{align*}
    \| v_\tau \|_M^2
    & =  \| v \|_{M}^2 + 2\tau( (A_h + M_1) v, v)_\vtx
      + \tau^2 \| M^{-1} (A_h + M_1) v \|_{M}^2
    \\
    & =  \| v \|_{M_0}^2 + \tau( (2A_h + M_1) v, v)_\vtx
      + \tau^2 \| M^{-1} (A_h + M_1) v \|_{M}^2
    \\
    & \le  \| v \|_{M_0}^2 - \tau |v|_d^2
    + \tau^2 \kappa |v|_d^2
  \end{align*}
  so~\eqref{eq:48-srk} follows when $1- \tau \kappa\ge 0$.

  Next, consider a subtent $\Tvk$. By~\eqref{eq:subtent-ids},
  $\Rsat_{\kk, h1}(1) = I + M^\kk(1)^{-1} (A_h^\kk + M_1^\kk) = I +
  r^{-1} M(\hat t^\kkk) (A_h + M_1)$, so translating~\eqref{eq:48-srk}
  with $\tau = 1/r \le 1/\kappa$ to this subtent, we obtain
  $\|\Rsat_{\kk, h1}(1) v \|_{M(t^\kkk)} \le \| v
  \|_{M(t^\kk)}$. Successively applying these estimates over all
  subtents,~\eqref{eq:47} is proved.
\end{proof}

Note that the inverse of $\kappa$ appearing in~\eqref{eq:49-cfl} will
stay away from zero (since $\kappa \lesssim 1$) allowing for a
nontrivial advance in $\tau$. One can view~\eqref{eq:49-cfl} as the
analogue of a traditional ``CFL condition'' {\em within} a tent.
Indeed, the pseudotime restriction~\eqref{eq:49-cfl} may be interpreted
as a restriction on time advance in the physical spacetime by a small
subtent whose tent pole height is a scalar multiple of $\hv$.
Even in the event~\eqref{eq:49-cfl}
forbids us to reach the tent-top pseudotime
(i.e., when $\tau=1$ does not satisfy~\eqref{eq:49-cfl}), splitting
the tent into smaller subtents does allow
the analogue of~\eqref{eq:49-cfl} to hold throughout every subtent.

\subsubsection{Stability verification in the $s=2$ case}
\label{ssec:stab-s=2}

We will now show how to prove stability under a stronger CFL condition
in the two-stage case.  Definition~\ref{def:sSATflow-Xh} with $s=2$ yields
\[
  \Rsat_{h2}(\tau) = I + \tau \Xh 1 + \frac{\tau^2}{2} M^{-1}M_0
  {\Xh 2}.
\]

\begin{lemma}
  \label{lem:SAT2id}
  For any $v \in \Vhv$,
  \begin{align*}
    \| \Rsat_{h2}(\tau) v \|_{M(\tau)}^2
    & = \| v\|_{M_0}^2
      -\tau \left| v + \frac \tau 2 \Xh 1 v\right|_d^2
      + \tau^3\, Z(\tau, v),
  \end{align*}
  where  $Z(\tau, v) = 
    [
      2(M_1 \Xh 1v,
      \Xh 1v)_\vtx -|\Xh 1 v |_d^2 
      + \tau \| M(\tau)^{-1} M_0 {\Xh 2}v \|_M^2
    ]/4.$
\end{lemma}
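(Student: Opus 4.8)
The plan is to prove the identity by a direct expansion of $\|\Rsat_{h2}(\tau)v\|_{M(\tau)}^2$, organized by powers of $\tau$. Abbreviate $M = M(\tau)$, $q = \Xh 1 v$, and $p = M^{-1}M_0\Xh 2 v$, so that $\Rsat_{h2}(\tau)v = v + \tau q + \frac{\tau^2}{2}p$, and note $\|M^{-1}M_0\Xh 2 v\|_M^2 = (Mp,p)_\vtx$. The three algebraic facts that do all the work are: (i) the recursion \eqref{eq:Xhrecurse} gives $M_0 q = (A_h + M_1)v$ and $M_0\Xh 2 v = (A_h + 2M_1)q$; (ii) by the definition of $p$, $Mp = M_0\Xh 2 v = (A_h + 2M_1)q$; and (iii) the identity $((2A_h + M_1)z, z)_\vtx = -|z|_d^2$ for $z\in\Vhv$, which follows from the definition of $d$ together with \eqref{eq:8} of Lemma~\ref{lem:intgparts} (and was already used in the proof of Lemma~\ref{lem:stability-imp}). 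Throughout I will also use that $M_0$, $M_1$, $M$ are self-adjoint on $\Lv$ (Lemma~\ref{lem:Mspd}) and that $d(\cdot,\cdot)$ is a symmetric bilinear form.

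First I would expand $\big(M(v + \tau q + \frac{\tau^2}{2}p),\, v + \tau q + \frac{\tau^2}{2}p\big)_\vtx$ into its six pieces, write each $(M\,\cdot,\cdot)_\vtx$ as $(M_0\,\cdot,\cdot)_\vtx - \tau(M_1\,\cdot,\cdot)_\vtx$, and use (i)--(ii) and self-adjointness to eliminate $M_0 q$, $M_0\Xh 2 v$ and $Mp$. Everything then reduces to inner products in $v$, $q$, $A_h$, $M_1$ and the single term $(Mp,p)_\vtx$. Collecting by powers of $\tau$: the $\tau^0$ coefficient is $(M_0 v, v)_\vtx = \|v\|_{M_0}^2$; the $\tau^1$ coefficient is $((2A_h + M_1)v, v)_\vtx$; the $\tau^2$ coefficient is $(A_h v, q)_\vtx + (A_h q, v)_\vtx + (M_1 v, q)_\vtx$; the $\tau^3$ coefficient is $(A_h q, q)_\vtx + (M_1 q, q)_\vtx$; and the $\tau^4$ coefficient is $\tfrac14 (Mp,p)_\vtx$.

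Next I would expand the claimed right-hand side the same way. Since $d$ is symmetric,
\[
  \Big| v + \tfrac{\tau}{2} q \Big|_d^2 = |v|_d^2 + \tau\, d(v,q) + \tfrac{\tau^2}{4}\,|q|_d^2 ,
\]
so $-\tau|v + \tfrac{\tau}{2}q|_d^2$ contributes $-|v|_d^2$ at order $\tau^1$, $-d(v,q)$ at order $\tau^2$, and $-\tfrac14|q|_d^2$ at order $\tau^3$; and $\tau^3 Z(\tau,v)$ contributes $\tfrac14\big(2(M_1 q, q)_\vtx - |q|_d^2\big)$ at order $\tau^3$ and $\tfrac14(Mp,p)_\vtx$ at order $\tau^4$. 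Matching orders: the $\tau^0$ and $\tau^4$ terms agree by inspection; the $\tau^1$ terms agree because $-|v|_d^2 = ((2A_h + M_1)v, v)_\vtx$ by (iii); the $\tau^2$ terms agree because $-d(v,q) = (A_h v, q)_\vtx + (A_h q, v)_\vtx + (M_1 v, q)_\vtx$ straight from the definition of $d$. The only order that really uses structure is $\tau^3$: one must check $(A_h q, q)_\vtx + (M_1 q, q)_\vtx = \tfrac14\big(2(M_1 q, q)_\vtx - |q|_d^2\big) - \tfrac14|q|_d^2$, which rearranges to $((2A_h + M_1)q, q)_\vtx = -|q|_d^2$, i.e. (iii) applied to $z=q$. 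Assembling the four matched powers gives the stated identity.

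I do not expect a serious obstacle here; the argument is essentially bookkeeping. The one place to be careful is which inner products are symmetric: $A_h$ is \emph{not} self-adjoint, so $(A_h v, q)_\vtx$ and $(A_h q, v)_\vtx$ must be kept distinct (only their sum, together with $(M_1 v, q)_\vtx$, is tied to $d(v,q)$), whereas $(A_h z, z)_\vtx$ is symmetric and is the only form in which $A_h$ enters at orders $\tau^1$ and $\tau^3$, exactly where identity (iii) is available.
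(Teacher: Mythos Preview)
Your proposal is correct and proceeds by essentially the same direct expansion as the paper, using the same three algebraic ingredients (the recursion for $\Xh k$, the relation $Mp=(A_h+2M_1)q$, and $((2A_h+M_1)z,z)_\vtx=-|z|_d^2$). The only difference is organizational: the paper groups $w=\tau q+\tfrac{\tau^2}{2}p$ and incrementally simplifies $\|v+w\|_M^2$ toward the final form, whereas you expand both sides fully and match coefficients of $\tau^0,\ldots,\tau^4$; the underlying computation is identical.
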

\begin{proof}
  Let $w = \tau \Xh 1 v+ (\tau^2/2) M^{-1}M_0 {\Xh 2}v.$ Then
  $v_\tau = \Rsat_{h2}(\tau) v$ can be written as 
  $v_\tau = v + w$. Expanding $\| v + w\|_M^2$,
  \begin{align*}
    \| v_\tau \|_M^2
    & = \| v\|_{M_0}^2 - \tau (M_1 v,v )_\vtx
      + 2 (w, v)_M + \| w \|_M^2 
    \\
    & = \| v\|_{M_0}^2 - \tau (M_1 v,v )_\vtx
      + 2 \tau ((M_0-\tau M_1) \Xh 1 v, v)_\vtx + \tau^2 (M_0{\Xh 2} v, v)_\vtx
      +  \| w \|_M^2 
    \\
    & = \| v\|_{M_0}^2 + \tau ((2A_h + M_1) v,v )_\vtx
      + \tau^2 (A_h  \Xh 1  v, v)_\vtx 
      + \| w \|_M^2.
  \end{align*}
  Note that $(A_h  \Xh 1  v, v)_\vtx = ( \Xh 1 v, A_h^t v)_\vtx
  = ( \Xh 1 v, (A_h^t + A_h + M_1) v)_\vtx - \|  \Xh 1  v
  \|_{M_0}^2$. Since 
  $d(y, z) = -((A_h^t + A_h + M_1)y, z)_\vtx$, we have
  \begin{align*}
    \| v_\tau\|_M^2
    &
      =  \| v \|_{M_0}^2 - \tau | v|_d^2 -\tau^2 d( \Xh 1  v, v) - \tau^2
      \|  \Xh 1  v\|_{M_0}^2 + \|w \|_M^2.
  \end{align*}
  Next, letting $z = (1/2) M^{-1} M_0 {\Xh 2} v$,
  expanding the last term above
  $ \|w \|_M^2 = \tau^2\|  \Xh 1  v \|_M^2 + \tau^3( \Xh 1 v, M_0 {\Xh 2} v)_\vtx
  + \tau^4 \| z \|_M^2,$
  noting that
  $M_0{\Xh 2} = (A_h + 2 M_1) \Xh 1 $, and simplifying,
  \begin{align*}
    \| v_\tau\|_M^2
    &
      =  \| v \|_{M_0}^2 - \tau | v|_d^2 -\tau^2 d( \Xh 1  v, v)
      + \tau^3((A_h + M_1) \Xh 1  v,  \Xh 1 v)_\vtx
      + \tau^4 \| z \|_M^2
    \\
    & =  \| v \|_{M_0}^2
      - \tau \big| v + \frac \tau 2  \Xh 1  v\big|_d^2
      + \frac{\tau^3}{4}((2A_h + 3M_1)  \Xh 1  v,  \Xh 1 v)_\vtx
      + \tau^4 \| z \|_M^2
  \end{align*}
  from which the stated identity follows.
\end{proof}

\begin{proposition}
  \label{prop:sat_s2}
  Let $v \in \Vhv$ and $r$ be chosen as the smallest integer not
  smaller than $\kappa_2^{1/3}/ \hv^{1/2}$ where $\kappa_2$ is defined
  using $Z(\tau, v)$ of Lemma~\ref{lem:SAT2id} by
  \[
    \kappa_2 = \sup_{0 \le \tau \le 1} \; 
    \sup_{0 \ne v \in \Vhv}
    \frac{Z(\tau, v)}{\| v \|_{M_0}^2}.
  \]
  Then 
  \[
    \| \Rsat_{h2}(\tau) v \|_{M(\tau)}
    \le (1 + \hv^{3/2})^{1/2}  \| v\|_{M_0}
    \quad \text{ for all } \tau \le 1/r,
  \]
  and $\Rsat_{rh2}$ satisfies
  the stability assumption~\eqref{eq:stability-sat-assume} of
  Theorem~\ref{thm:sat-ee}.
\end{proposition}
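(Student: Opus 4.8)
The plan is to read everything off the exact identity of Lemma~\ref{lem:SAT2id}, discarding the nonnegative $|\cdot|_d$-term and controlling the cubic remainder $Z(\tau,v)$. First I would record that $\kappa_2\lesssim1$, with a mesh-independent constant. By Lemma~\ref{lem:AXRbounds} (together with the bounds on $M_0^{\pm1}$ from Lemma~\ref{lem:Mspd}) the operators $A_h$, $M_1$, $M(\tau)$, $M(\tau)^{-1}$, $M_0$ all have $\vtx$-operator norms $\lesssim1$, hence so do $\Xh1=M_0^{-1}(A_h+M_1)$ and $\Xh2=M_0^{-1}(A_h+2M_1)\Xh1$. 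Since $-|\Xh1 v|_d^2\le0$, the two surviving terms of $Z(\tau,v)$ satisfy $(M_1\Xh1 v,\Xh1 v)_\vtx\lesssim\|v\|_\vtx^2$ and $\tau\,\|M(\tau)^{-1}M_0\Xh2 v\|_{M(\tau)}^2\lesssim\|v\|_\vtx^2$ uniformly for $0\le\tau\le1$, and $\|v\|_\vtx^2\lesssim\|v\|_{M_0}^2$; this gives $\kappa_2\lesssim1$, in particular $\kappa_2^{1/3}\lesssim1$.

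For the first (single-tent) estimate, Lemma~\ref{lem:SAT2id} gives, for $0\le\tau\le1$,
\[
  \|\Rsat_{h2}(\tau)v\|_{M(\tau)}^2
  \;\le\;\|v\|_{M_0}^2+\tau^3Z(\tau,v)
  \;\le\;(1+\kappa_2\tau^3)\,\|v\|_{M_0}^2,
\]
because the subtracted term $\tau\,|v+\tfrac\tau2\Xh1 v|_d^2$ is nonnegative. With $r$ the least integer $\ge\kappa_2^{1/3}/\hv^{1/2}$, any $\tau\le1/r$ forces $\kappa_2\tau^3\le\kappa_2r^{-3}\le\hv^{3/2}$, and taking square roots yields $\|\Rsat_{h2}(\tau)v\|_{M(\tau)}\le(1+\hv^{3/2})^{1/2}\|v\|_{M_0}$.

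For the subtent scheme I would repeat this on each subtent $\Tvk$, $\ell=1,\dots,r$, in the spirit of Proposition~\ref{lem:p0stability-r}. Applying Lemma~\ref{lem:SAT2id} on $\Tvk$ and using the scaling relations \eqref{eq:subtent-ids} (so $A_h^\kk=A_h/r$, $M_1^\kk=M_1/r$, $\Xhl1=r^{-1}M^\kk(0)^{-1}(A_h+M_1)$, $\Xhl2=r^{-2}M^\kk(0)^{-1}(A_h+2M_1)M^\kk(0)^{-1}(A_h+M_1)$, and $M^\kk(0)=M(\hat t^\kk)$, $M^\kk(1)=M(\hat t^\kkk)$), one checks that the cubic remainder produced on $\Tvk$ is $r^{-3}$ times a quantity of exactly the form of $Z$ at pseudotime $1/r\le1$ (with $\vpbot$ replaced by $\vphi^\kk$); hence, dropping again the nonnegative seminorm, $\|\Rsat_{\kk,h2}(1)v\|_{M^\kk(1)}^2\le(1+\kappa_2 r^{-3})\|v\|_{M^\kk(0)}^2$. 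Since $M^\kk(1)=M^\kkk(0)$, these chain telescopically over $\ell=1,\dots,r$ (using $\hat t^{[1]}=0$, $\hat t^{[r+1]}=1$):
\[
  \|\Rsat_{rh2}v\|_{M(1)}^2\le(1+\kappa_2 r^{-3})^r\|v\|_{M(0)}^2\le\exp\!\big(\kappa_2 r^{-2}\big)\|v\|_{M(0)}^2\le\exp\!\big(\kappa_2^{1/3}\hv\big)\|v\|_{M(0)}^2,
\]
where the last step uses $r\ge\kappa_2^{1/3}/\hv^{1/2}$. Since $\kappa_2^{1/3}\lesssim1$, the right-hand side is $\le(1+\Cstab\hv)^2\|v\|_{M(0)}^2$ for a mesh-independent $\Cstab$ and $\hv$ in a bounded range, and taking square roots gives \eqref{eq:stability-sat-assume}.

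The main obstacle is the bookkeeping in the last step: one must verify via \eqref{eq:subtent-ids} that the cubic remainder on a subtent scales precisely like $r^{-3}$ times an un-split $Z$-type quantity (with $\vpbot$ replaced by $\vphi^\kk$ and the pseudotime reference shifted to $\hat t^\kk$), so that the $r$ factors $(1+\kappa_2 r^{-3})$ compound only to $1+O(r^{-2})=1+O(\hv)$, and one must keep the constant multiplying $r^{-3}$ uniform in $\ell$ and $r$ — which is where the operator bounds of Lemma~\ref{lem:AXRbounds}, applied to the rescaled subtent operators, are invoked once more. The exponent $3$ in the threshold $\kappa_2^{1/3}/\hv^{1/2}$ for $r$ is exactly calibrated to make $\tau^3=O(\hv^{3/2})$ in the single-tent bound and $r\cdot r^{-3}=r^{-2}=O(\hv)$ in the composition.
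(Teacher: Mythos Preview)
Your proof is correct and follows the same route as the paper's: drop the nonnegative seminorm term in Lemma~\ref{lem:SAT2id}, bound the cubic remainder via $\kappa_2$, then apply this per subtent and control the product $(1+O(r^{-3}))^r$ by the exponential. You supply more detail on the subtent $r^{-3}$ scaling where the paper just writes ``applying this successively on each subtent,'' and your caveat that the per-subtent constant is really a shifted $\kappa_2$ (with $M_0$ replaced by $M(\hat t^\kk)$), to be handled via the uniform bounds of Lemma~\ref{lem:AXRbounds}, is well placed---the paper leaves this implicit.
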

\begin{proof}
  By Lemma~\ref{lem:SAT2id} and the definition of $\kappa_2$,
  \[
    \| \Rsat_{h2}(\tau) v \|_{M(\tau)}^2
    \le  \| v\|_{M_0}^2
    + \tau^3 \kappa_2 \| v\|_{M_0}^2
    \le (1 + \hv^{3/2}) \| v \|_{M_0}^2,
  \]
  since $\tau^3 \kappa_2 = \kappa_2/r^3 \le \hv^{3/2}.$
  Applying this successively on each subtent, we obtain
  \[
    \| \Rsat_{rh2} v \|_{M(1)}
    \le (1+ \hv^{3/2})^{r/2} \| v \|_{M_0}.
  \]
  Next, we use the bound
  $(1+ \hv^{3/2})^{r/2}\le \exp( \hv^{3/2} r / 2)$. Since the 
  argument of the exponential is bounded, 
  $ \exp( h^{3/2} r / 2) -1 \lesssim \hv^{3/2} r / 2 \lesssim \hv$.
  Thus there is an $\hv$-independent constant $C>0$ such that
  $\| \Rsat_{rh2} v \|_{M(1)} \le (1+ C\hv) \| v \|_{M_0}.$
\end{proof}

Note that by Lemma~\ref{lem:AXRbounds}, the constant $\kappa_2$
satisfies $\kappa_2 \lesssim 1$. Hence Proposition~\ref{prop:sat_s2}
gives stability under a so-called ``3/2-CFL'' condition. The latter term
is an adaptation of the terminology
on CFL conditions in~\cite{BurmaErnFerna10} for our tents,
in view of the fact
that our 
$\tau \le 1/r$ condition,
with $r$ as in Proposition~\ref{prop:sat_s2}, implies
that the amount of time advance
along a tent pole
($\tau\delta$)
is limited by $O(\hv^{3/2})$.

\section{Conclusion}

We have developed a convergence theory for MTP schemes for a large
class of linear hyperbolic systems, covering the semidiscrete case
(\S\ref{sec:semidisc}), as well as a few fully discrete schemes
(\S\ref{sec:fully-discr}). The convergence rate for the
semidiscretization was established to be $O(h^{p+1/2})$ in
Theorem~\ref{thm:semidiscrete} under reasonable assumptions.  When the
number of stages $s = p+1$, the fully discrete SAT scheme also gave
the same convergence rate (Theorem~\ref{thm:sat-ee}) under the
stability assumption~\eqref{eq:stability-sat-assume}. Through a
selected numerical example, we showed in
\S\ref{sssec:ratenotimprovable} that this convergence rate cannot be
improved in general.  The stability of SAT scheme was verified in
\S\ref{sssec:p=0} for the $p=0,$ $s=1$ case and in
\S\ref{ssec:stab-s=2} for the (arbitrary $p$) $s=2$ case.  Proving the
stability of SAT schemes (verifying~\eqref{eq:stability-sat-assume})
for other values of $s$ is currently an open problem.  It is however
possible to computationally verify stability within each tent by
solving a small eigenvalue problem as shown
in~\cite{GopalSchobWinte20}.  The numerical results there suggest that an
estimate of the form
$\| \Rsat_{rhs} v\|_M \le (1 + C r^{-s}) \| v \|_{M_0}$ might hold for
general~$r$ and $s$. If this is provable, then for larger $s$, a
slight modification of the argument of Proposition~\ref{prop:sat_s2}
would prove stability under a less stringent $(1 + 1/s)$-CFL
condition, which limits the amount of time advance by a
scalar multiple of $\hv^{1+1/s}$. Also, if our analysis
in~\S\ref{sssec:p=0} is any indication, it might be a worthwhile
future pursuit to seek further special cases where stability holds
under even weaker CFL conditions within a tent. The simplest cases of
the fully discrete analyses we presented are those of the lowest order
tent-implicit scheme in \S\ref{ssec:lowest-order-tent-implicit} and the
lowest order iterated explicit scheme in
\S\ref{ssec:lowest-order-expl}. The latter was obtained from a
nontraditional viewpoint of explicit schemes as iterative solvers for
implicit schemes.

\bibliographystyle{amsplain}

\end{document}